\pdfoutput=1
\documentclass{amsart}

\usepackage[inline]{asymptote}
\usepackage{amssymb,mathtools,nicefrac}
\usepackage{hyperref}
\usepackage[lite]{amsrefs}
\usepackage{enumitem}
\usepackage{microtype}


\newtheorem{theorem}{Theorem}[section]
\newtheorem{corollary}[theorem]{Corollary}
\newtheorem{lemma}[theorem]{Lemma}
\newtheorem{proposition}[theorem]{Proposition}

\theoremstyle{remark}
\newtheorem{example}[theorem]{Example}
\newtheorem{remark}[theorem]{Remark}
\theoremstyle{definition}
\newtheorem{definition}[theorem]{Definition}

\DeclareMathOperator{\Aut}{Aut}
\DeclareMathOperator{\Endo}{End}
\DeclareMathOperator{\Hom}{Hom}
\DeclareMathOperator{\Ind}{Ind}
\DeclareMathOperator{\cInd}{cInd}
\DeclareMathOperator{\Res}{Res}
\DeclareMathOperator{\Star}{Star}
\DeclareMathOperator{\tr}{tr}
\DeclareMathOperator{\im}{im}
\DeclareMathOperator{\supp}{supp}
\DeclareMathOperator{\Smooth}{S}
\DeclareMathOperator{\Rough}{R}

\newcommand*{\nb}{\nobreakdash}
\newcommand*{\defeq}{\mathrel{\vcentcolon=}}
\newcommand*{\congto}{\xrightarrow\cong}
\newcommand*{\mono}{\rightarrowtail}
\newcommand*{\epi}{\twoheadrightarrow}

\newcommand*{\abs}[1]{\lvert#1\rvert}
\newcommand*{\realise}[1]{\lvert#1\rvert}
\newcommand*{\av}[1]{\langle#1\rangle}

\newcommand{\twolinesubscript}[2]{\genfrac{}{}{0pt}{}{#1}{#2}}

\newcommand*{\N}{\mathbb N}
\newcommand*{\Z}{\mathbb Z}
\newcommand*{\Q}{\mathbb Q}
\newcommand*{\R}{\mathbb R}
\newcommand*{\C}{\mathbb C}
\newcommand*{\Mat}{\mathbb M}
\newcommand*{\Field}{\mathbb F}
\newcommand*{\Laurent}[1]{\mathbb F_{#1}\mathopen{[\![}t,t^{-1}]}

\newcommand*{\Id}{\textup{Id}}
\newcommand*{\diff}{\textup d}
\newcommand*{\fin}{\textup f}
\newcommand*{\op}{\textup{op}}

\newcommand*{\Proj}{e}
\newcommand*{\support}{u}
\newcommand*{\Hecke}{\mathcal H}
\newcommand*{\Mult}{\mathcal M}
\newcommand*{\Rep}{\mathfrak{Rep}}
\newcommand*{\Subrep}{\mathcal S}
\newcommand*{\Abcat}{\mathcal C}

\newcommand*{\Ho}{\textup H}
\newcommand*{\Cell}[1][*]{C_{#1}}
\newcommand*{\DCell}[1][*]{C^{#1}}
\newcommand*{\Hull}{\mathcal H}

\newcommand*{\LF}{\mathbb K}
\newcommand*{\inv}{\times}
\newcommand*{\integ}{\mathcal O}
\newcommand*{\maxid}{\mathcal P}
\newcommand*{\unif}{\varpi}

\newcommand*{\G}{\mathcal G}
\newcommand*{\CG}{K}
\newcommand*{\ses}{\textup{ss}}
\newcommand*{\Gl}{\textup{Gl}}

\newcommand*{\Buil}{\mathcal{BT}(\G_\LF)}
\newcommand*{\BuilGl}{\mathcal{BT}}
\newcommand*{\Apart}{A}
\newcommand*{\affRoot}{a} 
\newcommand*{\varaffRoot}{\tilde a} 
\newcommand*{\StabP}{P} 
\newcommand*{\StabS}{P^\dagger} 

\begin{document}
\begin{asydef}
defaultpen(font("OT1","cmr","m","n")+fontsize(10));
int i;//generic counter
pair a[];//the three unit directions of an Ã2 building
a[0]=2*(1,0);
a[1]=2*dir(60);
a[2]=2*dir(120);
\end{asydef}

\title[Buildings and resolutions of representations of \(p\)-adic groups]{Resolutions for representations of reductive \(p\)-adic groups via their buildings}
\author{Ralf Meyer}
\email{rameyer@uni-math.gwdg.de}
\author{Maarten Solleveld}
\email{maarten@uni-math.gwdg.de}

\address{Mathematisches Institut and\\
  Courant Centre ``Higher order structures''\\
  Georg-August Universit\"at G\"ottingen\\
  Bunsenstra{\ss}e 3--5\\
  37073 G\"ottingen\\
  Germany}

\begin{abstract}
  Schneider--Stuhler and Vign\'eras have used cosheaves on the affine Bruhat--Tits building to construct natural projective resolutions of finite type for admissible representations of reductive \(p\)\nb-adic groups in characteristic not equal to~\(p\).  We use a system of idempotent endomorphisms of a representation with certain properties to construct a cosheaf and a sheaf on the building and to establish that these are acyclic and compute homology and cohomology with these coefficients.  This implies Bernstein's result that certain subcategories of the category of representations are Serre subcategories.  Furthermore, we also get results for convex subcomplexes of the building.  Following work of Korman, this leads to trace formulas for admissible representations.
\end{abstract}
\thanks{Supported by the German Research Foundation (Deutsche Forschungsgemeinschaft (DFG)) through the Institutional Strategy of the University of G\"ottingen.}
\maketitle
\tableofcontents

\section{Introduction}
\label{sec:intro}

Let~\(\G_\LF\) be a reductive \(p\)\nb-adic group, that is, the group of \(\LF\)\nb-rational points of a reductive linear algebraic group over a non-Archimedean local field~\(\LF\).  Let~\(\Buil\) be the affine Bruhat--Tits building of~\(\G_\LF\).  Work related to the Baum--Connes conjecture has shown that \(\Buil\) knows a lot about topological properties of the category of smooth representations of~\(\G_\LF\) (see~\cites{Baum-Higson-Plymen:Representation, Solleveld:Periodic_Hecke_Schwartz}).  This article follows earlier work by Peter Schneider and Ulrich Stuhler~\cite{Schneider-Stuhler:Rep_sheaves} who use the building to construct natural projective resolutions of finite type for admissible representations of~\(\G_\LF\).  This was extended by Marie-France Vign\'eras~\cite{Vigneras:Cohomology} to representations on vector spaces over fields of characteristic not equal to~\(p\).

These resolutions may be used to study Euler--Poincar\'e functions of representations, to compute formal dimensions of discrete series representations, and to compute the inverse of the Baum--Connes assembly map on the K\nb-theory classes of discrete series representations (see \cites{Schneider-Stuhler:Rep_sheaves, Meyer:Homological_Schwartz}).

The input data for the resolutions of Schneider--Stuhler, besides a representation \(\pi \colon \G_\LF\to\Aut(V)\), is a carefully chosen system of compact open subgroups~\(\CG_\sigma\) for all polysimplices~\(\sigma\) in \(\Buil\), depending on a parameter \(e\in\N\).  Let~\(V_\sigma\) be the subspace of \(\CG_\sigma\)\nb-fixed points in~\(V\).  Then \((V_\sigma)_{\sigma\in\Buil}\) is a cosheaf on~\(\Buil\), which gives rise to a cellular chain complex \(\Cell\bigl(\Buil,(V_\sigma)\bigr)\).  This is shown to be a projective resolution of~\(V\) if the subspaces~\(V_x\) for vertices~\(x\) in~\(\Buil\) span~\(V\).  The proof is indirect and depends on Joseph Bernstein's deep theorem that the category of representations~\(V\) that are generated by the subspaces~\(V_x\) is a Serre subcategory in the category of smooth representations of~\(\G_\LF\) (see~\cite{Bernstein:Centre}).

One goal of this article is to obtain a Lefschetz fixed point formula for the character of an admissible representation of~\(\G_\LF\).  This issue was studied by Jonathan Korman in~\cite{Korman:Character}.  He could not get results in the higher rank case because this would require more information about the resolutions of Schneider and Stuhler.  In order to compute the value of the character on a compact regular element~\(g\) of~\(\G_\LF\), we need the cellular chain complex \(\Cell\bigl(\Sigma,(V_\sigma)\bigr)\) to remain acyclic if~\(\Sigma\) is a finite convex subcomplex of~\(\Buil\).  We may choose~\(\Sigma\) invariant under~\(g\), and then the trace of~\(g\) on \(\Cell\bigl(\Sigma,(V_\sigma)\bigr)\) agrees with the trace on~\(V\) for sufficiently large~\(\Sigma\).

We are going to prove directly that \(\Cell\bigl(\Sigma,(V_\sigma)\bigr)\) is a resolution of \(\sum_{x\in\Sigma^\circ} V_x\) for convex subcomplexes \(\Sigma\subseteq\Buil\) and certain cosheaves~\((V_\sigma)\); here~\(\Sigma^\circ\) denotes the set of vertices of~\(\Sigma\).  This implies immediately that the category of representations with \(V=\sum_{x\in\Sigma^\circ} V_x\) is a Serre subcategory of the category of all smooth representations.  Moreover, we can complete Korman's program and formulate a Lefschetz fixed point formula for character values of admissible representations.  We do not yet spend much time to discuss this formula because we hope to establish a more powerful trace formula in a forthcoming article.

The main innovation in this article is the axiomatic formulation of the properties of the cosheaf~\((V_\sigma)\) that are needed for the homology computation.  Our starting point is a system of idempotent endomorphisms \(\Proj_x\colon V\to V\) for vertices~\(x\) in~\(\Buil\) with the following three properties:
\begin{itemize}
\item \(\Proj_x\) and~\(\Proj_y\) commute if \(x\) and~\(y\) are adjacent vertices in~\(\Buil\);
\item \(\Proj_x\Proj_z\Proj_y=\Proj_x\Proj_y\) if \(z\in\Hull(x,y)\), and the vertices \(x\) and~\(z\) are adjacent; here \(x\), \(y\) and~\(z\) are vertices in~\(\Buil\) and~\(\Hull(x,y)\) denotes the intersection of all apartments containing \(x\) and~\(y\);
\item \(\Proj_{gx} = \pi_g\Proj_x\pi_g^{-1}\) for all \(g\in\G_\LF\) and all vertices~\(x\) in~\(\Buil\).
\end{itemize}
Given such a system of idempotents, we let \(\Proj_\sigma\) for a polysimplex~\(\sigma\) in~\(\Buil\) be the product of the commuting idempotents~\(\Proj_x\) for the vertices~\(x\) of~\(\sigma\), and we let \(V_\sigma\defeq \Proj_\sigma(V)\).  This defines a cosheaf on~\(\Buil\), and we show that \(\Cell\bigl(\Sigma,(V_\sigma)\bigr)\) for a convex subcomplex~\(\Sigma\) of~\(\Buil\) is a resolution of \(\sum_{x\in\Sigma^\circ} \Proj_x(V)\), where~\(\Sigma^\circ\) denotes the set of vertices of~\(\Sigma\).

The system~\((\Proj_x)\) provides a sheaf with the same spaces~\(V_\sigma\), using the projections \(\Proj_\sigma\colon V\to V_\sigma\).  We show that the cochain complex \(\DCell\bigl(\Sigma,(V_\sigma)\bigr)\) for this sheaf is a resolution of \(V\bigm/ \bigcap_{x\in\Sigma^\circ} \ker \Proj_x\).  Furthermore, if~\(\Sigma\) is finite then
\[
V\cong \sum_{x\in\Sigma^\circ} \Proj_x(V) \oplus \bigcap_{x\in\Sigma^\circ} \ker \Proj_x.
\]
The idempotent endomorphism~\(\support_\Sigma\) of~\(V\) that effects this decomposition is given by the remarkably simple formula
\[
\support_\Sigma \defeq \sum_{\sigma\in\Sigma} (-1)^{\dim\sigma} \Proj_\sigma.
\]
This fact plays an important role in our proof.

In characteristic~\(0\), the cellular chain complex \(\Cell\bigl(\Buil,(V_\sigma)\bigr)\) consists of finitely generated projective modules if~\(V\) is admissible, so that we get a projective resolution of finite type of \(\sum \Proj_x(V)\).  Since \(V\mapsto \Cell\bigl(\Buil,(V_\sigma)\bigr)\) is an exact functor, the class of representations for which it provides a resolution of~\(V\) is a Serre subcategory.  Thus the class of smooth representations with \(\sum \Proj_x(V)=V\) is a Serre subcategory in the category of all smooth representations of~\(\G_\LF\).  A corresponding statement holds in the cohomological case, provided we use rough representations instead of smooth ones.  By definition, a representation is smooth if it is the \emph{inductive} limit of the subspaces of \(\CG_n\)\nb-invariants, where~\((\CG_n)\) is a decreasing sequence of compact open subgroups with \(\bigcap_{n\in\N} \CG_n=\{1\}\); it is rough if it is the \emph{projective} limit of the same subspaces of \(\CG_n\)\nb-invariants, where we map \(\CG_{n+1}\)\nb-invariants to \(\CG_n\)\nb-invariants by averaging.

Let~\(V\) be an admissible \(\Field\)\nb-linear representation for a field~\(\Field\) whose characteristic is not~\(p\).  Assume \(V=\sum V_x\), and let \(f\colon \G_\LF\to \Field\) be a locally constant function supported in a compact subgroup \(\CG\subseteq\G_\LF\).  Then \(f(V)\) is finite-dimensional and hence contained in \(V|_\Sigma \defeq \sum_{x\in\Sigma^\circ} V_x\) for some finite convex subcomplex~\(\Sigma\) in~\(\Buil\), which we may take \(\CG\)\nb-invariant.  Then \(\Cell\bigl(\Sigma,(V_\sigma)\bigr)\) is a resolution of~\(V|_\Sigma\) by finite-dimensional representations of~\(\CG\).  Hence the character of~\(V|_\Sigma\), restricted to~\(\CG\), is equal to the sum
\[
\chi_\Sigma(g) = \sum_{\twolinesubscript{\sigma\in\Sigma}{g\sigma=\sigma}} (-1)^{\deg \sigma} \chi_{V_\sigma}(g),
\]
where~\(\chi_{V_\sigma}\) denotes the trace of the \(g\)\nb-action on~\(V_\sigma\), with a sign if~\(g\) reverses the orientation of~\(\sigma\).  For the chosen function \(f\in\Hecke(\G_\LF)\), the trace of~\(f\) on~\(V\) agrees with the trace on~\(V|_\Sigma\) because \(f(V)\subseteq V|_\Sigma\).  For arbitrary~\(f\), the trace on~\(V\) will be a limit of such traces on~\(V|_\Sigma\).

The above recipe provides a formula for the values of the character on regular elements.  For regular elliptic elements, this is already contained in~\cite{Schneider-Stuhler:Rep_sheaves}, and for~\(\G_\LF\) of rank~\(1\) such character formulas are established in~\cite{Korman:Character}.

\subsection{Notation and basic setup}
\label{sec:notation_setup}

The following notation will be used throughout this article.

Let~\(\LF\) be a non-Archimedean local field, that is, a finite extension of~\(\Q_p\) for some prime~\(p\) or the field of Laurent series \(\Laurent{q}\) over the finite field~\(\Field_q\) with~\(q\) elements for a prime power~\(q\).  Let~\(p\) be the characteristic of the residue field of~\(\LF\).  Let~\(\integ\) be the maximal compact subring of~\(\LF\) and let~\(\maxid\) be the maximal ideal in~\(\integ\).  Let~\(q\) be the cardinality of the residue field \(\integ/\maxid\).

Let~\(\G\) be a reductive linear algebraic group defined over~\(\LF\).  We write \(\G_\LF\) for its set of \(\LF\)\nb-rational points and briefly call~\(\G_\LF\) a \emph{reductive \(p\)\nb-adic group}.

Recall that~\(\G_\LF\) is a second countable, totally disconnected, locally compact group.  That is, its topology may be defined by a decreasing sequence of compact open subgroups \((\CG_n)_{n\in\N}\).

\subsubsection{Representations as modules over a Hecke algebra}
\label{sec:representations}

Smooth representations of~\(\G_\LF\) on \(\Q\)\nb-vector spaces are equivalent to non-degenerate modules over the Hecke algebra \(\Hecke(\G_\LF,\Q)\) of locally constant, compactly supported \(\Q\)\nb-valued functions of~\(\G_\LF\). Following Vign\'eras \cite{Vigneras:l-modulaires}*{Section I.3} we replace \(\Hecke(\G_\LF,\Q)\) by a Hecke algebra with \(\Z[\nicefrac{1}{p}]\)-coefficients.  This allows us to extend the correspondence between representations of~\(\G_\LF\) and \(\Hecke(\G_\LF)\)\nb-modules to representations on \(\Z[\nicefrac1p]\)-modules, thus covering vector spaces over fields of characteristic different from~\(p\).  Besides the non-degenerate \(\Hecke\)\nb-modules, which we call smooth here, we also need a dual class of rough \(\Hecke\)\nb-modules, which we introduce here (see also~\cite{Meyer:Smooth}).

\begin{lemma}
  \label{lem:subgroup_power_p}
  There is a compact open subgroup \(\CG\subseteq \G_\LF\) that is a pro-\(p\)-group, that is,  the index \([\CG:\CG']\) is a power of~\(p\) for all compact open subgroups~\(\CG'\) of~\(\CG\).
\end{lemma}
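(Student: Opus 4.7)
The plan is to construct $\CG$ explicitly as a principal congruence subgroup inside a matrix realisation of $\G_\LF$. Since $\G$ is a linear algebraic group defined over~$\LF$, I would fix a closed embedding of algebraic groups $\G\hookrightarrow\Gl_n$ for some~$n$; passing to $\LF$-rational points yields a closed topological embedding $\G_\LF\hookrightarrow\Gl_n(\LF)$.

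Next I would introduce the principal congruence subgroups
\[
K_m \defeq \{g\in\Gl_n(\integ) : g\equiv 1\pmod{\unif^m}\},\qquad m\ge 1,
\]
and verify that $K_1$ is a pro-$p$-group. Each $K_m$ is compact open in $\Gl_n(\LF)$, the chain $K_1\supseteq K_2\supseteq\cdots$ has trivial intersection, and the map $1+\unif^m A\mapsto A\bmod\unif$ identifies the quotient $K_m/K_{m+1}$ with the additive group $M_n(\integ/\maxid)$, whose order $q^{n^2}$ is a power of~$p$. Hence $K_1=\varprojlim_m K_1/K_m$ is a profinite limit of finite $p$-groups.

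I would then set $\CG\defeq K_1\cap\G_\LF$. Being the intersection of the open subgroup $K_1$ of $\Gl_n(\LF)$ with the closed subgroup $\G_\LF$, this is an open subgroup of $\G_\LF$; being closed in the compact group~$K_1$, it is also compact. Since closed subgroups of pro-$p$-groups are pro-$p$, the group~$\CG$ is a pro-$p$-group. Finally, any compact open subgroup $\CG'\subseteq\CG$ contains a compact open normal subgroup $N$ of~$\CG$ (take the intersection of the finitely many $\CG$-conjugates of $\CG'\cap K_m$ for large enough~$m$), and $\CG/N$ is a finite $p$-group, so $[\CG:\CG']$ divides $[\CG:N]$ and is therefore a power of~$p$.

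The argument is classical, so I do not anticipate a real obstacle; the one point to keep in mind is the last implication, namely that in a pro-$p$-group every open subgroup has $p$-power index, which relies on the neighbourhood basis of the identity by open normal subgroups with finite $p$-group quotients inherited from the filtration $(K_m\cap\G_\LF)$.
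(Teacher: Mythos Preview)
Your proposal is correct and follows essentially the same route as the paper: embed $\G_\LF$ in $\Gl_n(\LF)$, use the principal congruence filtration $K_m=1+\Mat_n(\maxid^m)$ to see that $K_1$ is pro-$p$, and then intersect with $\G_\LF$ using that closed subgroups of pro-$p$-groups are pro-$p$. The paper's proof is the same argument, only slightly more terse.
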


\begin{proof}
  Closed subgroups of pro-\(p\)-groups are again pro-\(p\)-groups.  Since any linear algebraic group is contained in~\(\Gl_d\) by definition, it suffices to prove the assertion for \(\Gl_d(\LF)\).  The subgroups \(\CG_n \defeq 1 + \Mat_d(\maxid^n)\) for \(n\ge1\) form a decreasing sequence of compact open subgroups and a neighbourhood basis of~\(1\).  Since \([\CG_n:\CG_{n+1}]=q^{d^2}\) is a power of~\(p\) for each~\(n\) and any open subgroup of~\(\CG_1\) contains~\(\CG_n\) for some \(n\in\N\), \(\CG_1\) is a pro-\(p\)-group.
\end{proof}

The lemma allows us to choose a Haar measure~\(\mu\) on~\(\G_\LF\) with \(\mu(\CG)\in\Z[\nicefrac{1}{p}]\) for all compact open subgroups~\(\CG\).  Let~\(\Hecke\) or, more precisely, \(\Hecke\bigl(\G_\LF,\Z[\nicefrac{1}{p}]\bigr)\) be the \(\Z[\nicefrac{1}{p}]\)-module of all locally constant, compactly supported functions \(\G_\LF\to \Z[\nicefrac{1}{p}]\).  Define the convolution of \(f_1,f_2\in\Hecke\) by
\[
f_1*f_2(g) \defeq \int_{\G_\LF} f_1(h) f_2(h^{-1}g) \,\diff\mu(h)
\qquad\text{for \(g\in\G_\LF\).}
\]
We claim that this belongs to~\(\Hecke\) again.  To see this, choose a compact open subgroup~\(\CG\) that is so small that~\(f_2\) is left \(\CG\)\nb-invariant and~\(f_1\) is right \(\CG\)\nb-invariant; then~\(f_1\) is a \(\Z[\nicefrac{1}{p}]\)-linear combinations of characteristic functions of cosets~\(g\CG\) for \(g\in \G_\LF\), and \(\chi_{g\CG}*f_2 = \mu(\CG)\cdot \lambda_gf_2\), where \(\lambda_gf_2(x) = f_2(g^{-1}x)\) as usual.

Thus~\(\Hecke\) is a ring over \(\Z[\nicefrac1p]\).  It is a subring of the \(\Q\)\nb-valued Hecke algebra \(\Hecke(\G_\LF,\Q) \defeq \Hecke\otimes_\Z \Q\).  The group~\(\G_\LF\) is embedded in the multiplier algebra of~\(\Hecke\), that is, products of the form \(gf\) or \(fg\) with \(g\in\G_\LF\) and \(f\in\Hecke\) are well-defined and satisfy the expected properties.

For a compact open pro-\(p\)-subgroup \(\CG\subseteq\G_\LF\), let
\[
\av{\CG}\defeq \mu(\CG)^{-1}\chi_\CG.
\]
This is an idempotent element in the ring~\(\Hecke\).  Let \((\CG_n)_{n\in\N}\) be a decreasing sequence of compact open pro-\(p\)-subgroups of~\(\G_\LF\) with \(\bigcap \CG_n=\{1\}\).  Then \(\bigl(\av{\CG_n}\bigr)_{n\in\N}\) is an increasing approximate unit of projections in~\(\G_\LF\).

\begin{definition}
  \label{def:smooth_module}
  An \(\Hecke\)\nb-module~\(V\) is called \emph{smooth} if \(V\cong \varinjlim {}\av{\CG_n}V\), that is, for each \(v\in V\) there is \(n\in\N\) with \(\av{\CG_n}\cdot v=v\).

  It is called \emph{rough} if \(V\cong \varprojlim {}\av{\CG_n}V\), that is, for any \((v_n)_{n\in\N} \in \prod_{n\in\N} V\) with \(\av{\CG_n}v_{n+1}=v_n\) for all \(n\in\N\), there is a unique \(v\in V\) with \(v_n = \av{\CG_n}\cdot v\) for all \(n\in\N\).

  We define the \emph{smoothening} \(\Smooth(V)\) and the \emph{roughening} \(\Rough(V)\) of an \(\Hecke\)\nb-module~\(V\) by
  \[
  \Smooth(V) \defeq \varinjlim {}\av{\CG_n}V,\qquad
  \Rough(V) \defeq \varprojlim {}\av{\CG_n}V,
  \]
  using the embedding \(\av{\CG_n}V\to \av{\CG_{n+1}}V\) and the projection \(\av{\CG_{n+1}}V\to \av{\CG_n}V\) induced by~\(\av{\CG_n}\) as structure maps.
\end{definition}

Since \(\av{\CG_n}V\) is a unital \(\av{\CG_n}\Hecke\av{\CG_n}\)-module and \(\Hecke= \varinjlim {}\av{\CG_n}\Hecke\av{\CG_n}\), both \(\Smooth(V)\) and \(\Rough(V)\) are modules over~\(\Hecke\).  Even more, the multiplier algebra \(\Mult(\Hecke)\) of~\(\Hecke\) acts on \(\Smooth(V)\) and \(\Rough(V)\) because for any multiplier~\(\mu\) of~\(\Hecke\), both \(\mu\av{\CG_n}\) and \(\av{\CG_n}\mu\) belong to \(\av{\CG_m}\Hecke\av{\CG_m}\) for some \(m\in\N\).  This allows us to well-define \(\mu\av{\CG_n}v\in \av{\CG_m}V\) for \(v\in V\) and \(\av{\CG_m}\mu v\in \av{\CG_m}V\) for \(v\in \Rough(V)\).  The canonical maps \(\Smooth(V)\to V\to\Rough(V)\) are \(\Mult(\Hecke)\)\nb-module homomorphisms.  In particular, since \(\G_\LF\subseteq\Mult(\Hecke)\), smooth and rough \(\Hecke\)\nb-modules both carry natural representations of~\(\G_\LF\).

We may alternatively define smoothenings and roughenings as \(\Smooth(V) \cong \Hecke \otimes_\Hecke V\) and \(\Rough(V) \cong \Hom_\Hecke(\Hecke,V)\).  These definitions are used in~\cite{Meyer:Smooth} and can be extended to all locally compact groups.

\begin{proposition}
  \label{pro:Hecke_modules}
  The category of smooth \(\Hecke\)\nb-modules is equivalent to the category of smooth representations of~\(\G_\LF\) on \(\Z[\nicefrac{1}{p}]\)-modules.

  Let \(V\) and~\(W\) be two \(\Hecke\)\nb-modules.  If~\(V\) is smooth, then the map \(\Smooth(W) \to W\) induces an isomorphism \(\Hom_\Hecke\bigl(V,\Smooth(W)\bigr) \cong \Hom_\Hecke(V,W)\).  If~\(W\) is rough, then the map \(V\to\Rough(V)\) induces an isomorphism \(\Hom_\Hecke(\Rough(V),W) \cong \Hom_\Hecke(V,W)\).

  Let~\(V\) be an \(\Hecke\)\nb-module.  The natural maps \(\Smooth(V)\to V\to\Rough(V)\) induce natural isomorphisms
  \[
  \Smooth\bigl(\Smooth(V)\bigr) \cong \Smooth(V) \cong \Smooth\bigl(\Rough(V)\bigr),\qquad
  \Rough\bigl(\Smooth(V)\bigr) \cong \Rough(V) \cong \Rough\bigl(\Rough(V)\bigr).
  \]

  The smoothening and roughening functors restrict to equivalences of categories between the subcategories of rough and smooth \(\Hecke\)\nb-modules, respectively.
\end{proposition}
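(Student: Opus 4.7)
The whole proposition hinges on two elementary observations. First, the idempotent \(\av{\CG_n}\) is central enough that every \(\Hecke\)-linear map \(f\colon V\to W\) restricts to a map \(\av{\CG_n}V\to \av{\CG_n}W\); second, because \((\CG_n)\) is a neighbourhood basis of~\(1\) by compact open pro-\(p\)-subgroups, a vector \(v\) satisfies \(\av{\CG_n}v=v\) precisely when its \(\G_\LF\)-stabiliser contains~\(\CG_n\), so \(\av{\CG_n}V\) coincides with the space \(V^{\CG_n}\) of \(\CG_n\)-invariants and the transition maps \(\av{\CG_n}V\subseteq\av{\CG_{n+1}}V\) and \(\av{\CG_n}\colon\av{\CG_{n+1}}V\to\av{\CG_n}V\) are precisely the inclusion and the averaging familiar from the theory of smooth representations. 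I would record these two observations first and use them throughout.

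For the equivalence in the first assertion, I would start from \(\G_\LF\subseteq\Mult(\Hecke)\), which turns any \(\Hecke\)\nb-module into a \(\G_\LF\)\nb-representation; smoothness of the \(\Hecke\)\nb-module then translates directly via the first observation into smoothness of the \(\G_\LF\)\nb-representation on a \(\Z[\nicefrac1p]\)\nb-module. The inverse functor integrates \(f\in\Hecke\) against the given action using the Haar measure~\(\mu\) (with values in \(\Z[\nicefrac1p]\)); the two functors are mutually inverse because on \(\CG_n\)\nb-invariants both compositions are the identity.

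For the adjunctions in the second assertion I would argue as follows. If~\(V\) is smooth and \(f\colon V\to W\) is \(\Hecke\)\nb-linear, then for every \(v\in V\) we have \(v=\av{\CG_n}v\) for some~\(n\), hence \(f(v)=\av{\CG_n}f(v)\in\av{\CG_n}W\subseteq\Smooth(W)\); this shows every map \(V\to W\) factors uniquely through \(\Smooth(W)\hookrightarrow W\). If instead~\(W\) is rough, then restricting an \(\Hecke\)-linear \(f\colon V\to W\) to the \(\av{\CG_n}V\) gives a compatible family \(f_n\colon \av{\CG_n}V\to\av{\CG_n}W\); passing to the inverse limit yields \(\Rough(V)\to\Rough(W)=W\), and uniqueness follows from the isomorphism \(W\cong\varprojlim \av{\CG_n}W\). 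The third assertion is a direct corollary: the key computation is that the canonical maps \(\Smooth(V)\to V\to\Rough(V)\) induce isomorphisms \(\av{\CG_n}\Smooth(V)\cong \av{\CG_n}V\cong\av{\CG_n}\Rough(V)\) for every~\(n\) (the first because \(\av{\CG_n}V\subseteq\Smooth(V)\), the second because projecting a compatible sequence \((v_m)\) by \(\av{\CG_n}\) returns the component~\(v_n\)); since \(\Smooth\) and \(\Rough\) are defined as limits of the \(\av{\CG_n}(-)\), all four iterated functors agree with \(\Smooth\) or \(\Rough\) as claimed.

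Given assertion three, the last statement is formal: \(\Smooth(V)\) is automatically smooth because \(\Smooth\Smooth V\cong\Smooth V\), and dually for \(\Rough\); on a smooth~\(V\) one has \(\Smooth\Rough V\cong\Smooth V\cong V\), and on a rough~\(W\) one has \(\Rough\Smooth W\cong\Rough W\cong W\), giving the equivalence. The main bookkeeping obstacle is verifying the identity \(\av{\CG_n}\Rough(V)\cong\av{\CG_n}V\) cleanly, since elements of \(\Rough(V)\) are compatible sequences rather than honest vectors; once this is in hand everything else unwinds from the two observations stated at the outset.
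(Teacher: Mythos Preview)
Your proposal is correct and follows essentially the same approach as the paper: both arguments rest on the observation that an \(\Hecke\)-linear map sends \(\av{\CG_n}V\) to \(\av{\CG_n}W\), derive the two adjunctions from this by passing to direct and inverse limits respectively, and reduce the third assertion to the key identity \(\av{\CG_n}\Smooth(V)=\av{\CG_n}V=\av{\CG_n}\Rough(V)\). Your write-up is slightly more explicit about the first equivalence (the paper simply cites the well-known characteristic-zero case and says the proof carries over), but otherwise the two proofs are the same.
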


\begin{proof}
  The first statement is well-known for \(\Hecke(\G_\LF,\Q)\), and the proof carries over literally to the \(\Z[\nicefrac1p]\)-linear case.

  Any \(\Hecke\)\nb-module homomorphism \(f\colon V\to W\) maps \(\av{\CG_n}V\) to \(\av{\CG_n}W\).  If~\(V\) is smooth, then \(V=\varinjlim {}\av{\CG_n}V\), so that~\(f\) factors through \(\varinjlim {}\av{\CG_n}W = \Smooth(W)\).  Thus \(\Hom_\Hecke\bigl(V,\Smooth(W)\bigr) \cong \Hom_\Hecke(V,W)\).  We also get induced maps
  \[
  \Rough(V) = \varprojlim_n {}\av{\CG_n}V \to \av{\CG_n}V\to \av{\CG_n}W
  \]
  for all~\(n\).  These piece together to a map \(\Rough(V)\to\Rough(W)\).  If~\(W\) is rough, this shows that~\(f\) extends uniquely to a map \(\Rough(V)\to W\), so that \(\Hom_\Hecke(\Rough(V),W) \cong \Hom_\Hecke(V,W)\).  The assertions in the third paragraph follow because \(\av{\CG_n}\Smooth(V) = \av{\CG_n}V = \av{\CG_n}\Rough(V)\) for all \(n\in\N\).  They show that \(\Smooth\) and~\(\Rough\) are inverse to each other as functors between the subcategories of rough and smooth representations, respectively, whence the equivalence of categories.
\end{proof}

Recall that both smooth and rough \(\Hecke(\G_\LF)\)-modules carry an induced group representation of~\(\G_\LF\).  Conversely, this representation of~\(\G_\LF\) determines the module structure, by integration.  Thus we may also speak of smooth and rough group representations of~\(\G_\LF\).  A representation is rough if and only if it is the projective limit of the subspaces of \(\CG_n\)\nb-invariants with respect to the averaging maps.

\subsubsection{Cellular chain complexes of equivariant cosheaves}
\label{sec:cellular_coefficient}

For any reductive \(p\)\nb-adic group, Bruhat and Tits \cites{Bruhat-Tits:Reductifs_I, Bruhat-Tits:Reductifs_II, Tits:Reductive} constructed an affine building.  More precisely, they constructed two buildings, one for~\(\G_\LF\) and one for its maximal semisimple quotient~\(\G^\ses_\LF\).  We shall use the building for~\(\G^\ses_\LF\), which we call the \emph{Bruhat--Tits building} of~\(\G_\LF\) and denote by \(\Buil\).

Recall that \(\Buil\) is a locally finite polysimplicial complex of dimension equal to the rank of~\(\G^\ses\).  It carries a canonical metric, for which it becomes a CAT(0)-space.  The group~\(\G^\ses_\LF\) acts on~\(\Buil\), properly, cocompactly and isometrically.  Being a CAT(0)-space, it follows that~\(\Buil\) is \(\CG\)\nb-equivariantly contractible for any compact subgroup~\(\CG\) of~\(\G^\ses_\LF\), so that~\(\Buil\) is a classifying space for proper actions of~\(\G^\ses_\LF\)  (see~\cite{Baum-Connes-Higson:BC}).  The action of~\(\G^\ses_\LF\) induces one of~\(\G_\LF\) because of the quotient map \(\G\epi\G^\ses\).

We mostly treat polysimplicial complexes such as~\(\Buil\) as purely combinatorial objects and view~\(\Buil\) as the set of polysimplices, partially ordered by \(\tau\prec\sigma\) if~\(\tau\) is a face of~\(\sigma\).  (Hence it would make no big difference if we used the building for~\(\G_\LF\) instead of the building for~\(\G_\LF^\ses\).)  A polysimplex of dimension~\(0\) is called a \emph{vertex}, and a polysimplex of maximal dimension is called a \emph{chamber}.  For a polysimplicial complex~\(\Sigma\), we let \(\Sigma^\circ\) be its set of vertices.  Two vertices or polysimplices \(x\) and~\(y\) are called \emph{adjacent} if there is a polysimplex~\(\sigma\) with \(x,y\prec\sigma\); adjacent vertices need not be connected by an edge unless~\(\Sigma\) is a simplicial complex.  The \emph{star} of a polysimplex is the set of all polysimplices adjacent to it.  If \(\sigma\) and~\(\tau\) are adjacent, then we let \([\sigma,\tau]\) be the smallest polysimplex containing \(\sigma\cup\tau\).

The action of~\(\G_\LF\) on~\(\Buil\) preserves the polysimplicial structure, so that we get an induced action on the set of polysimplices.

Now we recall how to construct chain and cochain complexes of representations using (simplicial) cosheaves and sheaves on~\(\Buil\) (see also~\cite{Schneider-Stuhler:Rep_sheaves}*{Section II.1}).  Cosheaves are also called coefficient systems.

Let~\(\Sigma\) be a polysimplicial complex.  A \emph{sheaf} on~\(\Sigma\) is a system of Abelian groups~\((V_\sigma)_{\sigma\in\Sigma}\) with maps \(\varphi_\sigma^\tau\colon V_\sigma\to V_\tau\) for \(\tau\succ\sigma\) that satisfy \(\varphi_\sigma^\sigma=\Id_{V_\sigma}\) and \(\varphi_\tau^\omega\circ \varphi_\sigma^\tau=\varphi_\sigma^\omega\) for \(\omega\succ\tau\succ\sigma\).  In other words, a sheaf is a functor on the category associated to the partially ordered set \((\Sigma,\prec)\).  Dually, a \emph{cosheaf} on~\(\Sigma\) is contravariant functor on this category, that is, a system of Abelian groups~\((V_\sigma)_{\sigma\in\Sigma}\) with maps \(\varphi_\sigma^\tau\colon V_\sigma\to V_\tau\) for \(\tau\prec\sigma\) that satisfy \(\varphi_\sigma^\sigma=\Id_{V_\sigma}\) and \(\varphi_\tau^\omega\circ \varphi_\sigma^\tau=\varphi_\sigma^\omega\) for \(\omega\prec\tau\prec\sigma\).

To form cellular chain complexes, we equip each simplex with an orientation.  This induces orientations on its boundary faces.  We define
\[
\varepsilon_{\tau\sigma} \defeq
\begin{cases}
  \phantom{-}1 &\text{if \(\tau\prec\sigma\) with compatible orientations,}\\
  -1 &\text{if \(\tau\prec\sigma\) with opposite orientations,}\\
  \phantom{-}0 &\text{if~\(\tau\) is not a face of~\(\sigma\).}
\end{cases}
\]
Let \(\Gamma =(V_\sigma,\varphi_\sigma^\tau)\) be a cosheaf on a polysimplicial complex~\(\Sigma\).  The \emph{cellular chain complex} \(\Cell\bigl(\Sigma,\Gamma\bigr)\) of~\(\Sigma\) with coefficients~\(\Gamma\) is the \(\N\)\nb-graded chain complex \(\bigoplus_{\sigma\in\Sigma} V_\sigma\) with~\(V_\sigma\) in degree \(\deg(\sigma)\) and with the boundary map
\[
\partial\bigl((v_\sigma)_{\sigma\in\Sigma}\bigr)_\tau \defeq \sum_{\sigma\in\Sigma} \varepsilon_{\tau\sigma} \varphi_\sigma^\tau(v_\sigma).
\]
The homology of \(\Cell(\Sigma,\Gamma)\) is denoted by \(\Ho_*(\Sigma,\Gamma)\) and called the \emph{homology of~\(\Sigma\) with coefficients~\(\Gamma\)}.

Dually, let \(\Gamma =(V_\sigma,\varphi_\sigma^\tau)\) be a sheaf on~\(\Sigma\) and assume that~\(\Sigma\) is locally finite -- this holds for subcomplexes of \(\Buil \).  The \emph{cellular cochain complex} \(\DCell\bigl(\Sigma,\Gamma\bigr)\) of~\(\Sigma\) with coefficients~\(\Gamma\) is the \(\N\)\nb-graded cochain complex \(\prod_{\sigma\in\Sigma} V_\sigma\) with~\(V_\sigma\) in degree \(\deg(\sigma)\) and with the boundary map
\[
\partial\bigl((v_\sigma)_{\sigma\in\Sigma}\bigr)_\tau \defeq \sum_{\sigma\in\Sigma} \varepsilon_{\sigma\tau} \varphi_\sigma^\tau(v_\sigma),
\]
which is well-defined because~\(\Sigma\) is locally finite.  The cohomology of \(\DCell(\Sigma,\Gamma)\) is denoted by \(\Ho^*(\Sigma,\Gamma)\) and called the \emph{cohomology of~\(\Sigma\) with coefficients~\(\Gamma\)}.

A \emph{\(\G_\LF\)\nb-equivariant cosheaf or sheaf} on~\(\Buil\) is a cosheaf or sheaf~\(\Gamma\) on~\(\Buil\) with isomorphisms \(\alpha_g\colon V_\sigma \congto V_{g\cdot\sigma}\) for all \(g\in\G_\LF\), \(\sigma\in\Buil\) compatible with the maps~\(\varphi_\sigma^\tau\), such that \(\alpha_1=\Id\), \(\alpha_g\circ\alpha_h = \alpha_{gh}\).  The cellular (co)chain complex of a \(\G_\LF\)\nb-equivariant (co)sheaf inherits a representation of~\(\G_\LF\) by
\[
\alpha_g\bigl((v_\sigma)_{\sigma\in\Sigma}\bigr)_\tau \defeq
\sum_{\sigma\in\Sigma} g_{\tau\sigma} \alpha_g(v_\sigma),
\]
where
\begin{equation}
  \label{eq:orientation_character}
  g_{\tau\sigma}=
  \begin{cases}
    1&\text{if \(g(\sigma)=\tau\) and \(g|_\sigma\colon \sigma\to\tau\) preserves orientations,}\\
    -1&\text{if \(g(\sigma)=\tau\) and \(g|_\sigma\colon \sigma\to\tau\) reverses orientations,}\\
    0&\text{otherwise.}
  \end{cases}
\end{equation}
Each~\(V_\sigma\) inherits a representation of the stabiliser
\[
\StabS_\sigma \defeq \{g\in\G_\LF\mid g\sigma=\sigma\}.
\]
Notice that this group may be strictly larger than the pointwise stabiliser
\[
\StabP_\sigma \defeq \{g\in\G_\LF\mid \text{\(gx=x\) for each vertex~\(x\) of~\(\sigma\)}\}.
\]

\begin{lemma}
  \label{lem:cochain_smooth_chain_rough}
  The representation of~\(\G_\LF\) on~\(\Cell(\Buil,\Gamma)\) is smooth if and only if~\(\StabP_\sigma\) acts smoothly on~\(V_\sigma\) for each polysimplex~\(\sigma\) in~\(\Buil\).

  The representation of~\(\G_\LF\) on~\(\DCell(\Buil,\hat{\Gamma})\) is rough if and only if~\(\StabP_\sigma\) acts roughly on~\(V_\sigma\) for each polysimplex~\(\sigma\) in~\(\Buil\).
\end{lemma}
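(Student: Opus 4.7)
The strategy is to decompose \(\Cell(\Buil,\Gamma) = \bigoplus_\sigma V_\sigma\) and \(\DCell(\Buil,\Gamma) = \prod_\sigma V_\sigma\) and to track how \(\G_\LF\) permutes polysimplices. The defining formula \((g\cdot v)_\tau = g_{\tau,g^{-1}\tau}\,\alpha_g(v_{g^{-1}\tau})\) sends the \(\sigma\)\nb-component to the \(g\sigma\)\nb-component up to sign. Since \(\G_\LF\) acts properly on the locally finite building, each \(\StabP_\sigma\) is compact open; a standard compactness argument (a decreasing sequence of closed subsets of \(\CG_1\) with empty intersection is eventually empty) then shows that any decreasing sequence \((\CG_n)\) of compact open pro-\(p\)\nb-subgroups of \(\G_\LF\) with trivial intersection satisfies \(\CG_n\subseteq\StabP_\sigma\) for all \(n\) past some threshold \(n_0(\sigma)\). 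The key consequence is that \((\av{\CG_n}c)_\sigma = \av{\CG_n}(c_\sigma)\) inside \(V_\sigma\) for \(n\ge n_0(\sigma)\), because every \(g\in\CG_n\) pointwise fixes \(\sigma\) and preserves its orientation.

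For the smooth statement, \((\Leftarrow)\) is immediate: given a chain \(c=(v_\sigma)\) with finite support \(S\), pick a compact open \(\CG_\sigma\subseteq\StabP_\sigma\) fixing \(v_\sigma\) for each \(\sigma\in S\) and intersect. For \((\Rightarrow)\), embed \(v\in V_\sigma\) as a chain supported only at~\(\sigma\); a compact open subgroup fixing this element must lie in \(\StabS_\sigma\) by the support formula, and intersecting with the finite-index open subgroup \(\StabP_\sigma\) produces the required witness.

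The rough statement is the main content. For \((\Leftarrow)\), take a coherent sequence \((w^{(n)})\in\varprojlim\av{\CG_n}\DCell(\Buil,\Gamma)\); for each \(\sigma\) the tail \((w^{(n)}_\sigma)_{n\ge n_0(\sigma)}\) is, by the key observation, coherent in \(\varprojlim\av{\CG_n}V_\sigma\), hence lifts to a unique \(w_\sigma\in V_\sigma\) by roughness of \(V_\sigma\). Setting \(w\defeq(w_\sigma)_\sigma\), I verify \(\av{\CG_n}w=w^{(n)}\) at each \(\tau\) by rewriting \(w^{(n)}=\av{\CG_n}w^{(m)}\) for an \(m\) with \(m\ge n_0(\sigma')\) for every \(\sigma'\) in the (finite) \(\CG_n\)\nb-orbit of \(\tau\), and then using \(\av{\CG_n}\av{\CG_m}=\av{\CG_n}\); injectivity of \(\DCell(\Buil,\Gamma)\to\Rough(\DCell(\Buil,\Gamma))\) is read off componentwise. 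Conversely \((\Rightarrow)\), a coherent sequence \((v_n)\) in \(V_\sigma\) is embedded into \(\DCell(\Buil,\Gamma)\) by placing \(v_n\) in the \(\sigma\)\nb-coordinate for \(n\ge n_0(\sigma)\) and propagating backward via \(\av{\CG_n}\); the resulting coherent sequence in \(\DCell(\Buil,\Gamma)\) lifts by roughness, and the \(\sigma\)\nb-component of the lift is the desired element of \(V_\sigma\).

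The main obstacle is precisely that \(\av{\CG_n}\) on \(\DCell(\Buil,\Gamma)\) is not componentwise for small~\(n\), since \(\CG_n\) may permute several polysimplices inside the orbit of \(\tau\). Everything hinges on the observation that for each fixed \(\sigma\) this orbit collapses to \(\{\sigma\}\) once \(n\ge n_0(\sigma)\); combined with \(\av{\CG_n}\av{\CG_m}=\av{\CG_n}\), this reduces both directions of the rough statement to roughness of each \(V_\sigma\) under \(\StabP_\sigma\).
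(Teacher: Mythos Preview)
Your proof is correct, but it follows a different path from the paper's.  The paper first observes that, since~\(\G_\LF\) acts with finitely many orbits on~\(\Buil\), there are \(\G_\LF\)\nb-equivariant identifications
\[
\Cell(\Buil,\Gamma) \cong \bigoplus_{\sigma\in S} \cInd_{\StabS_\sigma}^{\G_\LF} V_\sigma,
\qquad
\DCell(\Buil,\hat\Gamma) \cong \prod_{\sigma\in S} \Ind_{\StabS_\sigma}^{\G_\LF} V_\sigma,
\]
where~\(S\) is a finite set of orbit representatives; the lemma then reduces to standard facts that compact induction from an open subgroup preserves smoothness and that full induction preserves roughness (the latter shown by writing \(\Ind_{\StabS_\sigma}^{\G_\LF} V_\sigma\) as a projective limit over an exhaustion of~\(\G_\LF\) by \(\StabS_\sigma\)\nb-biinvariant subsets).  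Your argument instead stays with the full product \(\prod_{\sigma\in\Buil} V_\sigma\) and tracks the averaging operators componentwise, using only the eventual inclusion \(\CG_n\subseteq\StabP_\sigma\) together with \(\av{\CG_n}\av{\CG_m}=\av{\CG_n}\).  The paper's route is more structural and immediately reusable (it isolates the behaviour of \(\cInd\) and~\(\Ind\)), while yours is more self-contained and avoids invoking the induction formalism.  One small correction: your claim that~\(\StabP_\sigma\) is compact is false when~\(\G_\LF\) has non-compact centre, since the action on~\(\Buil\) factors through~\(\G^\ses_\LF\); but~\(\StabP_\sigma\) is always open, and openness is all your argument actually uses to obtain the threshold~\(n_0(\sigma)\).
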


\begin{proof}
  We may replace~\(\StabP_\sigma\) by~\(\StabS_\sigma\) in both statements because the former is an open subgroup of~\(\StabS_\sigma\).

  Let~\(S\) be a set of representatives for the orbits of~\(\G_\LF\) on \(\Buil\).  This set is finite because~\(\G_\LF\) acts transitively on the set of chambers.  As a representation of~\(\G_\LF\) 
  \[
  \Cell(\Buil,\Gamma) = \bigoplus_{\sigma\in S} \cInd_{\StabS_\sigma}^{\G_\LF} V_\sigma,
  \]
  where we equip~\(V_\sigma\) with the induced representation of~\(\StabS_\sigma\), twisted by the orientation character in~\eqref{eq:orientation_character}, and where \(\cInd_{\StabS_\sigma}^{\G_\LF} V_\sigma\) is the space of all functions \(f\colon \G_\LF\to V_\sigma\) with \(g f(xg)= f(x)\) for \(g\in\StabS_\sigma\) and \(f(x)=0\) for~\(x\) outside a compact subset of \(\G_\LF/\StabS_\sigma\).  The group~\(\G_\LF\) acts on this by left translation.  It is easy to see that this representation is smooth if~\(\StabS_\sigma\) acts smoothly on~\(V_\sigma\).
  Similarly,
  \[
  \DCell(\Buil,\Gamma) = \prod_{\sigma\in S} \Ind_{\StabS_\sigma}^{\G_\LF} V_\sigma,
  \]
  where~\(V_\sigma\) carries the same representation as above and \(\Ind_{\StabS_\sigma}^{\G_\LF} V_\sigma\) is defined like \(\cInd_{\StabS_\sigma}^{\G_\LF} V_\sigma\) but without the support restriction.  Such a representation of~\(\G_\LF\) is usually not smooth, even if~\(\StabS_\sigma\) acts smoothly on~\(V_\sigma\), because there is no uniformity in the smoothness of functions in \(\Ind_{\StabS_\sigma}^{\G_\LF} V_\sigma\).  

Let \((X_n)\) be an increasing sequence of \(\StabS_\sigma\)\nb-biinvariant subsets of~\(\G_\LF\) with \(\G_\LF=\bigcup X_n\).  By definition, \(\Ind_{\StabS_\sigma}^{\G_\LF} V_\sigma\) is the \emph{projective} limit of the spaces of functions in \(\Ind_{\StabS_\sigma}^{\G_\LF} V_\sigma\) that are supported in~\(X_n\).  The group~\(\StabS_\sigma\) acts smoothly or roughly on this subspace if and only if it acts smoothly or roughly on~\(V_\sigma\).  The induced representation of~\(\StabS_\sigma\) on the projective limit of these rough representations remains rough.  This is equivalent to roughness as a representation of~\(\G_\LF\) because~\(\StabS_\sigma\) is open in~\(\G_\LF\).
\end{proof}

\begin{definition}
  \label{def:hull}
  We define the hull \(\Hull (\sigma,\tau)\) of two polysimplices \(\sigma\) and~\(\tau\) in a building as the intersection of all apartments containing \(\sigma\cup\tau\) (see Figure~\ref{fig:hull} for some examples).
\end{definition}
\begin{figure}[htbp]
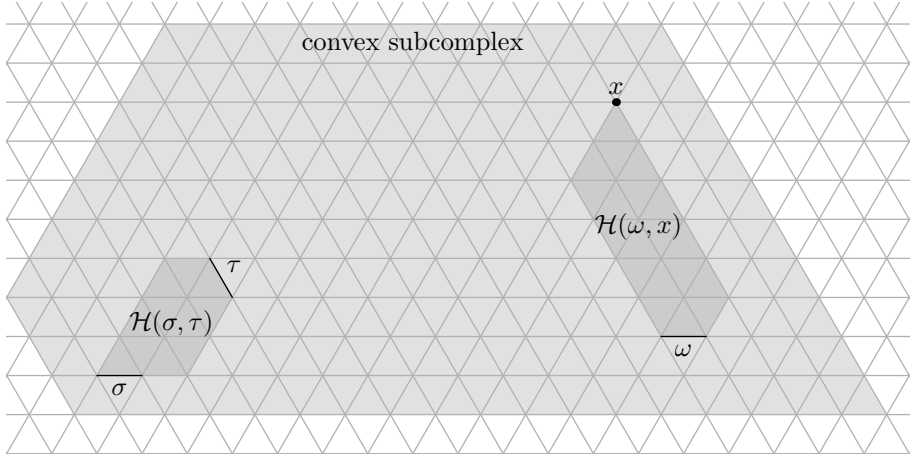

  \centering
\begin{asy}
unitsize(.3cm);
fill(1*a[1]+a[0]--2*a[0]+4*a[2]--9*a[0]+11*a[2]--20*a[0]+11*a[2]--20*a[0]+a[2]--cycle,white*.88);//big convex subcomplex

fill((2*a[1]+a[0])--(2*a[1]+3*a[0])--(4a[1]+3a[0])--(5*a[1]+2*a[0])--(5*a[1]+a[0])--cycle,white*.80);//first convex hull
fill((7*a[1]+9*a[0])--(9*a[1]+9*a[0])--(4*a[1]+14*a[0])--(3*a[1]+14*a[0])--(3*a[1]+13*a[0])--cycle,white*.80);//second convex hull

//draw the walls of the building:
for(i=-18;i<30;++i){
  draw((i*a[0])--(i*a[0]+20*a[1]),white*.7);
  draw((i*a[0])--(i*a[0]+20*a[2]),white*.7);
}
for(i=0;i<20;++i){
  draw((i*a[2]-10*a[0])--(i*a[2]+(20+i)*a[0]),white*.7);
}

//clip to rectangular region:
clip((0,0)--(40,0)--(40,20)--(0,20)--cycle);

//facets defining the first convex hull:
draw ((2*a[1]+a[0])--(2*a[1]+2*a[0]));
label("\(\sigma\)",2*a[1]+1.5*a[0],S);
draw ((4*a[1]+3*a[0])--(4*a[1]+3*a[0]+a[2]));
label("\(\tau\)",4*a[1]+3*a[0]+.5*a[2],NE);
label ("\(\Hull(\sigma,\tau)\)",1.9*a[0]+3.5*a[1]);

//facets defining the second convex hull:
dot("\(x\)",(9*a[1]+9*a[0]),N);
draw((3*a[1]+13*a[0])--(3*a[1]+14*a[0]));
label("\(\omega\)",3*a[1]+13.5*a[0],S);
label ("\(\Hull(\omega,x)\)",6*a[1]+11*a[0]);

//label convex subcomplex
label ("convex subcomplex",11*a[2]+14.5*a[0],S);
\end{asy}
  \caption{Hulls of facets and a convex subcomplex in an \(\tilde{A}_2\)\nb-apartment}
  \label{fig:hull}
\end{figure}
This notion generalizes the hull of two chambers, as defined in \cite{Garrett:Buildings}*{\S16.2}.

\begin{definition}
  \label{def:subcomplex}
  A \emph{subcomplex} of a polysimplicial complex~\(\Sigma\) is a subset~\(\Sigma'\) of~\(\Sigma\) with \(\tau\in\Sigma'\) if \(\tau\prec\sigma\) and \(\sigma\in\Sigma'\).
\end{definition}

\begin{definition}
  \label{def:convex_subcomplex}
  A subcomplex~\(\Sigma\) of~\(\Buil\) is called \emph{convex} if any polysimplex contained in \(\Hull(\sigma,\tau)\) for \(\sigma,\tau\in\Sigma\), is contained in~\(\Sigma\).
\end{definition}

By definition, \(\Hull(\sigma,\tau)\) is the smallest convex subcomplex containing \(\sigma\cup\tau\).  A subcomplex~\(\Sigma\) of~\(\Buil\) is convex in this combinatorial sense if and only if its geometric realisation~\(\realise{\Sigma}\) is convex in \(\realise{\Buil}\) in the geometric sense: \(x\in\realise{\Sigma}\) if~\(x\) lies on the geodesic segment between two points of~\(\realise{\Sigma}\).

\begin{example}
  \label{exa:convex}
  The star of a polysimplex in~\(\Buil\) is a convex subcomplex.

  Let \(\CG\subseteq\G_\LF\) be a compact subgroup.  Define \(\Buil^\CG\subseteq\Buil\) by \(\sigma\in\Buil^\CG\) if and only if all vertices of~\(\sigma\) are fixed by~\(\CG\).  This is a non-empty convex subcomplex of~\(\Buil\).  It is finite if the subgroup~\(\CG\) is compact and open.
\end{example}

\section{Natural resolutions of representations}
\label{sec:resolve_rep}

Peter Schneider and Ulrich Stuhler~\cite{Schneider-Stuhler:Rep_sheaves} associated a certain cosheaf to an admissible \(\Q\)\nb-linear representation~\(V\) of a reductive \(p\)\nb-adic group~\(\G_\LF\) and showed that the cellular chain complex with coefficients in this cosheaf is a resolution of~\(V\).  Their proof was indirect and based on a deep result of Joseph Bernstein about Serre subcategories of the category of smooth representations (\cite{Bernstein:Centre}*{Corollaire 3.9}).  Marie-France Vign\'eras~\cite{Vigneras:Cohomology} extended the constructions in~\cite{Schneider-Stuhler:Rep_sheaves} to representations over fields of characteristic different from~\(p\), based on the results of~\cite{Vigneras:l-modulaires}.

We are going to prove directly that cellular (co)chain complexes with certain (co)sheaves as coefficients are acyclic and compute their (co)homology in degree~\(0\).  It is important for the proof and for some applications, such as the character computations below, to allow finite convex subcomplexes of the building.

The cosheaves considered in \cites{Schneider-Stuhler:Rep_sheaves, Vigneras:Cohomology} are of the form \(V_\sigma \defeq V^{\CG_\sigma}\) for certain compact open subgroups \(\CG_\sigma\subseteq\G_\LF\); here \(V^{\CG_\sigma}\subseteq V\) denotes the subspace of \(\CG_\sigma\)\nb-invariants.  Since the subgroup~\(\CG_\sigma\) can be computed from the groups~\(\CG_x\) for the vertices of~\(\sigma\), it suffices to describe the subgroups~\(\CG_x\) for vertices \(x\in\Buil^\circ\).

The subgroups used in~\cite{Schneider-Stuhler:Rep_sheaves} are small enough to be pro-\(p\)-groups (see Lemma~\ref{lem:subgroup_power_p}) and hence give rise to idempotents \(\av{\CG_\sigma}\in\Hecke = \Hecke\bigl(\G_\LF,\Z[\nicefrac{1}{p}]\bigr)\).  The subspace~\(V^{\CG_\sigma}\) is the range \(\av{\CG_\sigma}V\) of this idempotent on~\(V\).  These idempotents are more relevant than the subgroups for our proofs, which therefore break down in characteristic~\(p\).  In the next section, we formalise the required properties of the idempotents~\(\av{\CG_x}\).

\subsection{Consistent systems of idempotents}
\label{sec:consistent_idempotents}

\begin{definition}
  \label{def:consistent_idempotents}
  A system \((\Proj_x)_{x\in\Buil^\circ}\) of idempotent endomorphisms \(\Proj_x\colon V\to V\) is called \emph{consistent} if it has the following properties:
  \begin{enumerate}[label=\textup{(\alph{*})}]
  \item \label{consistent_commute} \(\Proj_x\) and~\(\Proj_y\) commute if \(x\) and~\(y\) are adjacent;
  \item \label{consistent_convex} \(\Proj_x\Proj_z\Proj_y=\Proj_x\Proj_y\) for \(x,y,z\in\Buil^\circ\) with \(z\in\Hull(x,y)\) and~\(z\) is adjacent to~\(x\).
  \end{enumerate}
  If \(\pi\colon \G_\LF\to\Aut(V)\) is a group representation, then a system of idempotents is called \emph{equivariant} if
  \begin{enumerate}[resume,label=\textup{(\alph{*})}]
  \item \label{consistent_equivariant} \(\Proj_{gx} = \pi_g\Proj_x\pi_g^{-1}\) for all \(g\in\G_\LF\), \(x\in\Buil^\circ\).
  \end{enumerate}
\end{definition}

The idempotents~\(\Proj_x\) for vertices~\(x\) yield idempotents~\(\Proj_\sigma\) for polysimplices~\(\sigma\), which inherit analogues of the consistency properties:

\begin{proposition}
  \label{pro:consistent}
  Let \((\Proj_x)_{x\in\Buil^\circ}\) be a consistent system of idempotents.  For a polysimplex \(\sigma\in\Buil^\circ\),
  \[
  \Proj_\sigma \defeq \prod_{\twolinesubscript{x\in\Buil^\circ}{x \prec\sigma}} \Proj_x
  \]
  is a well-defined idempotent endomorphism of~\(V\).
  \begin{enumerate}[resume,label=\textup{(\alph{*})}]
  \item \label{consistent_simplex_commute} \(\Proj_\sigma\Proj_\tau=\Proj_{[\sigma,\tau]}\) if the polysimplices \(\tau\) and~\(\sigma\) are adjacent; here \([\sigma,\tau]\) denotes the smallest polysimplex containing \(\sigma\) and~\(\tau\);

  \item \label{consistent_simplex_convex} \(\Proj_\sigma\Proj_\omega\Proj_\tau=\Proj_\sigma\Proj_\tau\) if \(\sigma\), \(\tau\), and~\(\omega\) are polysimplices in~\(\Buil\) with \(\omega\in \Hull(\sigma,\tau)\);
  \end{enumerate}
  if \((\Proj_x)_{x\in\Buil^\circ}\) is equivariant, then
  \begin{enumerate}[resume,label=\textup{(\alph{*})}]
  \item \label{consistent_simplex_equivariant} \(\pi_g \Proj_\sigma \pi_g^{-1} = \Proj_{g\cdot\sigma}\) for all \(g\in\G_\LF\), \(\sigma\in\Buil\).
  \end{enumerate}
\end{proposition}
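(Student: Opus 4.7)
First I would dispatch the easy parts. Since every pair of vertices $x, y \prec \sigma$ is adjacent (both lie in $\sigma$), property \ref{consistent_commute} guarantees that $\{\Proj_x : x \prec \sigma\}$ is a commuting family of idempotents. Hence $\Proj_\sigma \defeq \prod_{x \prec \sigma} \Proj_x$ is well-defined independent of ordering, and satisfies $\Proj_\sigma^2 = \Proj_\sigma$. Claim \ref{consistent_simplex_equivariant} then follows by conjugating factor-by-factor using \ref{consistent_equivariant}, together with the fact that $x \mapsto gx$ is a bijection $\sigma^\circ \to (g\sigma)^\circ$:
\[
\pi_g \Proj_\sigma \pi_g^{-1} = \prod_{x \prec \sigma} (\pi_g \Proj_x \pi_g^{-1}) = \prod_{x \prec \sigma} \Proj_{gx} = \Proj_{g\sigma}.
\]

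For \ref{consistent_simplex_commute}, I would observe that all vertices of $\sigma$ and $\tau$ lie in the single polysimplex $[\sigma,\tau]$ and are therefore pairwise adjacent; commutativity plus idempotence then give $\Proj_\sigma \Proj_\tau = \prod_{v \in \sigma^\circ \cup \tau^\circ} \Proj_v$. In the simplicial case this already equals $\Proj_{[\sigma,\tau]}$. In the polysimplicial case $[\sigma,\tau]^\circ$ may strictly contain $\sigma^\circ \cup \tau^\circ$ (e.g.\ two opposite edges of a square chamber miss the two remaining corners of the square). For each such extra vertex $w \in [\sigma,\tau]^\circ \setminus (\sigma^\circ \cup \tau^\circ)$, I would exhibit $x \in \sigma^\circ$ adjacent to $w$ and $y \in \tau^\circ$ with $w \in \Hull(x,y)$, available directly from the product decomposition of $[\sigma,\tau]$ into simplicial factors. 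Property \ref{consistent_convex} then gives $\Proj_x \Proj_w \Proj_y = \Proj_x \Proj_y$, so $\Proj_w$ may be freely inserted into the product (after positioning by commutativity). Iterating over the missing vertices yields $\Proj_{[\sigma,\tau]}$.

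The main obstacle is \ref{consistent_simplex_convex}. My strategy is to reduce to the vertex case: expand $\Proj_\omega$ as the commuting product of its $\Proj_v$ for $v \prec \omega$ and argue iteratively that it suffices to prove, for each vertex $v \in \Hull(\sigma,\tau)^\circ$, the identity $\Proj_\sigma \Proj_v \Proj_\tau = \Proj_\sigma \Proj_\tau$. The clean case is when $v$ is adjacent to some vertex $x \in \sigma^\circ$ and there exists $y \in \tau^\circ$ with $v \in \Hull(x,y)$: commuting $\Proj_x$ into the rightmost position of $\Proj_\sigma$ and $\Proj_y$ into the leftmost position of $\Proj_\tau$, the central triple $\Proj_x \Proj_v \Proj_y$ collapses to $\Proj_x \Proj_y$ via \ref{consistent_convex}, absorbing $\Proj_v$. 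In general $v$ need not be adjacent to any vertex of $\sigma$, and one must proceed inductively along a sequence of vertices in $\Hull(\sigma,\tau)$ linking $\sigma^\circ$ to $v$ by pairwise-adjacent steps, applying the above reduction at each step. The delicate point — and the technical heart of the proof — is to produce such a sequence; this should follow from the standard description of $\Hull(\sigma,\tau)$ inside any apartment containing $\sigma \cup \tau$, where the hull is a convex polysimplicial region and galleries of the desired form are available.
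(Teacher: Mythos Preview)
Your handling of the preliminary parts is fine, and in fact you are more careful about \ref{consistent_simplex_commute} than the paper, which dispatches it with ``the same argument'' without addressing the extra vertices of $[\sigma,\tau]$ in the genuinely polysimplicial case; your fix via \ref{consistent_convex} is correct.

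For \ref{consistent_simplex_convex}, however, your outline has two real gaps. First, the reduction ``it suffices to prove $\Proj_\sigma\Proj_v\Proj_\tau=\Proj_\sigma\Proj_\tau$ for each vertex $v\prec\omega$'' is not justified: the vertices of $\omega$ commute with one another but not in general with $\Proj_\sigma$ or $\Proj_\tau$, so having absorbed one you cannot simply peel off the next. The paper solves this by first treating the case where $\omega$ is \emph{adjacent} to $\tau$; then every $\Proj_v$ commutes with $\Proj_\tau$ and one shows $\Proj_\sigma\Proj_\tau=\Proj_\sigma\Proj_\tau\Proj_v$, which does iterate cleanly to give $\Proj_\sigma\Proj_\tau\Proj_\omega=\Proj_\sigma\Proj_\omega\Proj_\tau$.

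Second, and more seriously, your vertex-path argument for a general $v\in\Hull(\sigma,\tau)$ fails. The paper's Remark following its vertex-path lemma gives an explicit $\tilde{A}_3$ configuration---vertices $x,y$ and an edge $[z_1,z_2]$---with $y\in\Hull(x,[z_1,z_2])$ but $y\notin\Hull(x,z_1)$ and $y\notin\Hull(x,z_2)$. Thus there may be no vertex of $\tau$ providing the hull condition you need to invoke \ref{consistent_convex}, and your ``clean case'' reduction is unavailable even at the first step. The vertex-path lemma holds only under the extra hypothesis that the target vertex is adjacent to $\sigma$. To reach an arbitrary $\omega$, the paper instead constructs a path of \emph{polysimplices} $\tau=\tau_0,\tau_1,\dots,\tau_m=\omega$ in which consecutive terms are faces of one another, with $\tau_i\in\Hull(\omega,\tau_{i-1})$ and $\omega\in\Hull(\sigma,\tau_i)$; since face implies adjacent, the already-established adjacent case then applies at each step. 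This polysimplicial gallery, and the recognition that a purely vertex-based path is inadequate in rank $\ge 3$, is the ingredient your sketch is missing.
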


Proposition~\ref{pro:consistent} is interesting from an axiomatic point of view although our (co)homology computations only require a small part of it and checking \ref{consistent_simplex_commute}--\ref{consistent_simplex_equivariant} is not much harder in examples than checking \ref{consistent_commute}--\ref{consistent_equivariant}.  Since the proof of Proposition~\ref{pro:consistent} is rather complicated, we postpone it to Section~\ref{sec:proof_consistent}.

Given a consistent system of idempotents, we define
\[
V_\sigma \defeq \Proj_\sigma(V)\subseteq V
\]
and let \(\varphi_\sigma^\tau\colon V_\sigma\to V_\tau\) for \(\tau\prec\sigma\) be the inclusion map (here we use~\ref{consistent_simplex_commute}).  This defines a cosheaf on~\(\Buil\), which we denote by~\(\Gamma\).  If the system \((\Proj_x)\) is equivariant, \(\Gamma\) is a \(\G_\LF\)\nb-equivariant cosheaf by~\ref{consistent_simplex_equivariant}.

The cellular chain complex \(\Cell(\Buil,\Gamma)\) is augmented by the map
\[
\alpha\colon \Cell[0](\Buil,\Gamma) = \bigoplus_{x\in\Buil^\circ} V_x \to V,
\]
taking the embedding \(V_x\mono V\) on the summand~\(V_x\).  Clearly, \(\alpha\) is \(\G_\LF\)\nb-equivariant and satisfies \(\alpha\circ\partial=0\).

We also let \(\hat\varphi_\tau^\sigma\colon V_\tau\to V_\sigma\) for \(\tau\prec\sigma\) be the projection~\(\Proj_\sigma\); this is a split surjection by~\ref{consistent_simplex_commute}.  This defines a sheaf \(\hat{\Gamma} = (V_\sigma,\hat\varphi_\tau^\sigma)\) on~\(\Buil\).  It is \(\G_\LF\)\nb-equivariant if~\((\Proj_x)\) is equivariant.  Its cellular chain complex is augmented by the equivariant chain map
\[
\alpha\colon V \to  \DCell[0](\Buil,\hat{\Gamma}) = \prod_{x\in\Buil^\circ} V_x,\qquad
v\mapsto \bigl(\Proj_x(v)\bigr)_{x\in\Buil^\circ}.
\]
Condition~\ref{consistent_convex} is not necessary for \(\Gamma\) and \(\hat{\Gamma}\) to be equivariant simplicial (co)sheaves, but to prove acyclicity of \(\Cell(\Buil,\Gamma)\) and \(\DCell(\Buil,\hat{\Gamma})\).

Although the sheaf and cosheaf \(\Gamma\) and \(\hat{\Gamma}\) seem unrelated at first sight, these two constructions become equivalent when we allow~\(V\) to be an object of a general Abelian category~\(\Abcat\).

In this setting, the collection of endomorphisms of~\(V\) is still a ring, so that idempotents in \(\Endo(V)\) make sense. A consistent system of idempotents in \(\Endo(V)\) for an object~\(V\) of an Abelian category~\(\Abcat\) yields a cosheaf \(\Gamma\) and a sheaf \(\hat{\Gamma}\) with values in~\(\Abcat\) exactly as above.  We may form the cellular chain and cochain complexes \(\Cell(\Sigma,\Gamma)\) and \(\DCell(\Sigma,\hat{\Gamma})\) provided~\(\Abcat\) has countable coproducts and products.  For (co)homology computations, we require these coproducts and products to be exact.

\begin{lemma}
  \label{lem:opposite_category}
  The passage from~\(\Abcat\) to its opposite category~\(\Abcat^\op\) exchanges the roles of \(\Gamma\) and \(\hat{\Gamma}\) and hence of \(\Cell(\Sigma,\Gamma)\) and \(\DCell(\Sigma,\hat{\Gamma})\).
\end{lemma}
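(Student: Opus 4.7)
The plan is to verify that each of the three layers of the construction — the consistency axioms, the formation of~$\Gamma$ and~$\hat{\Gamma}$, and the cellular (co)chain complexes — is self-dual under passage from~$\Abcat$ to~$\Abcat^\op$, whence the exchange claimed in the lemma follows automatically. For the first layer, I would check that the same family $(\Proj_x)$ remains a consistent system of idempotents on~$V$ when~$V$ is regarded as an object of~$\Abcat^\op$: property~\ref{consistent_commute} is symmetric in the two idempotents and carries over without work, while property~\ref{consistent_convex}, read with composition reversed, becomes the equation $\Proj_y\Proj_z\Proj_x = \Proj_y\Proj_x$ in~$\Abcat$ for $z\in\Hull(x,y)$ adjacent to~$x$. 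For this I would invoke Proposition~\ref{pro:consistent}\ref{consistent_simplex_convex} in the special case $\sigma=y$, $\omega=z$, $\tau=x$; that statement is symmetric in~$\sigma$ and~$\tau$ and requires no adjacency hypothesis, so the identity is immediate.

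For the second layer, I would use that an idempotent together with its image is a self-dual datum: the splitting $V \twoheadrightarrow \Proj_\sigma V \hookrightarrow V$ presents $\Proj_\sigma V$ as the image of~$\Proj_\sigma$ in both~$\Abcat$ and~$\Abcat^\op$, with the roles of monomorphism and epimorphism interchanged. In~$\Abcat$ the cosheaf~$\Gamma$ is assembled from the monic inclusions $V_\sigma \hookrightarrow V_\tau$ for $\tau\prec\sigma$ and the sheaf~$\hat{\Gamma}$ from the epic projections $V_\tau \twoheadrightarrow V_\sigma$, so the arrow-reversal which defines~$\Abcat^\op$ sends the structure maps of~$\Gamma$ to those of~$\hat{\Gamma}$ and vice versa. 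Hence the cosheaf produced by the construction in~$\Abcat^\op$ is~$\hat{\Gamma}_\Abcat$ and the sheaf produced in~$\Abcat^\op$ is~$\Gamma_\Abcat$.

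For the third layer, I would combine this with the observation that coproducts in~$\Abcat^\op$ are products in~$\Abcat$: the object~$\bigoplus_\sigma V_\sigma$ used to form~$\Cell$ in~$\Abcat^\op$ is the~$\prod_\sigma V_\sigma$ underlying~$\DCell$ in~$\Abcat$, and the two differential formulas differ only by replacing the incidence coefficient~$\varepsilon_{\tau\sigma}$ by~$\varepsilon_{\sigma\tau}$, which is exactly the effect of arrow reversal. I do not foresee a genuine obstacle: the content of the lemma is bookkeeping, whose only substantive input is the symmetric, adjacency-free form of~\ref{consistent_convex} supplied by Proposition~\ref{pro:consistent}\ref{consistent_simplex_convex}; the main care needed is to keep the two composition conventions distinct when verifying the idempotent identities.
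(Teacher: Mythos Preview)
Your proof is correct and follows essentially the same approach as the paper: both arguments observe that the consistency conditions are invariant under passage to the opposite ring (the paper phrases this via the manifestly symmetric form of conditions~\ref{consistent_simplex_commute}--\ref{consistent_simplex_equivariant}, you reduce the asymmetric condition~\ref{consistent_convex} to the symmetric~\ref{consistent_simplex_convex}), then note that the image of an idempotent is self-dual with its inclusion and projection interchanged, and finally that products and coproducts swap. The only difference is cosmetic: you spell out slightly more detail on the boundary-map side (the $\varepsilon_{\tau\sigma}\leftrightarrow\varepsilon_{\sigma\tau}$ swap), while the paper leaves this implicit.
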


This is why it is useful to allow general categories in the following, although we are mainly interested in representations on \(\Z[\nicefrac1p]\)-modules or on vector spaces over some field.

\begin{proof}
  Since \(\Endo_{\Abcat^\op}(V)\) is the opposite ring of \(\Endo_\Abcat(V)\), both rings \(\Endo_{\Abcat^\op}(V)\) and \(\Endo_\Abcat(V)\) have the same idempotents.  Thus the constructions in \(\Abcat\) and~\(\Abcat^\op\) use the same data.  Conditions \ref{consistent_simplex_commute}--\ref{consistent_simplex_equivariant} in Proposition~\ref{pro:consistent} are manifestly invariant under passage to the opposite ring, so that we get the same consistent or equivariant systems of idempotents in \(\Abcat\) and~\(\Abcat^\op\).  Now consider an idempotent endomorphism~\(p\) of~\(V\) as an endomorphism in~\(\Abcat^\op\).  Its range remains \(p(V)\), and the embedding \(p(V)\to V\) becomes the quotient map \(V\to p(V)\) induced by~\(p\).  As a consequence, the construction of \(\Gamma\) in~\(\Abcat^\op\) yields precisely \(\hat{\Gamma}\).  Furthermore, the passage to opposite category exchanges products and coproducts, so that \(\Cell(\Sigma,\Gamma)\) becomes \(\DCell(\Sigma,\hat{\Gamma})\) in the opposite category, for any subcomplex~\(\Sigma\) of~\(\Buil\).
\end{proof}

\begin{theorem}
  \label{the:main_theorem_resolution}
  Let~\(\Abcat\) be an Abelian category with exact countable products and coproducts.  Let~\(V\) be an object of~\(\Abcat\) and let \((\Proj_x)_{x\in\Buil^\circ}\) be a consistent system of idempotents in its endomorphism ring \(\Endo(V)\).  Let~\(\G_\LF\) be a reductive \(p\)\nb-adic group and let~\(\Sigma\) be a convex subcomplex of its affine Bruhat--Tits building~\(\Buil\).  Let~\(I\) denote the directed set of finite convex subcomplexes of\/~\(\Sigma\).
  \begin{itemize}
  \item The cellular chain complex \(\Cell(\Sigma,\Gamma)\) is exact except in degree~\(0\), where the augmentation map induces an isomorphism
    \[
    \Ho_0(\Sigma,\Gamma)
    \cong \varinjlim_{\Sigma_\fin\in I} {}\sum_{x\in\Sigma_\fin^\circ} \Proj_x(V).
    \]
  \item The cellular cochain complex \(\DCell(\Sigma,\hat\Gamma)\) is exact except in degree~\(0\), where the augmentation map induces an isomorphism
    \[
    \varprojlim_{\Sigma_\fin\in I} {}\biggl(V \Bigm/ \bigcap_{x\in\Sigma_\fin^\circ} \ker \Proj_x \biggr) \cong
    \Ho^0(\Sigma,\hat\Gamma).
    \]
  \item If\/~\(\Sigma\) is itself finite, then the composite map
    \[
    \sum_{x\in\Sigma^\circ} \Proj_x(V) \cong
    \Ho_0(\Sigma,\Gamma) \to V \to
    \Ho^0(\Sigma,\hat\Gamma) \cong
    V \Bigm/ \bigcap_{x\in\Sigma^\circ} \ker \Proj_x
    \]
    is an isomorphism, that is,
    \[
    V \cong \sum_{x\in\Sigma^\circ} \Proj_x(V) \oplus \bigcap_{x\in\Sigma^\circ} \ker \Proj_x.
    \]
  \end{itemize}
\end{theorem}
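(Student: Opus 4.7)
My plan is to first prove the third bullet for finite $\Sigma$ using the idempotent $\support_\Sigma$ introduced in the introduction, then upgrade this degree-zero computation to full acyclicity via a contracting homotopy, and finally pass to the limit for arbitrary convex $\Sigma$.

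\emph{The finite case.} For finite $\Sigma$, I set
\[
\support_\Sigma \defeq \sum_{\sigma\in\Sigma}(-1)^{\dim\sigma}\Proj_\sigma.
\]
The cornerstone of the argument is the identity $(\star)$: $\Proj_y\,\support_\Sigma = \Proj_y$ for every $y\in\Sigma^\circ$, together with the mirror identity $\support_\Sigma\,\Proj_y = \Proj_y$ that holds by the same reasoning since \ref{consistent_simplex_convex} is symmetric in its arguments. Granting these, peeling off the commuting vertex-factors in $\Proj_\sigma = \prod_{x\prec\sigma}\Proj_x$ (which commute by \ref{consistent_commute}) and applying $(\star)$ to the last remaining factor gives $\Proj_\sigma\support_\Sigma = \Proj_\sigma = \support_\Sigma\Proj_\sigma$; summing with signs then yields $\support_\Sigma^2 = \support_\Sigma$. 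The image of $\support_\Sigma$ sits inside $\sum_{x\in\Sigma^\circ}\Proj_x(V)$ because each $\Proj_\sigma(V)\subseteq\Proj_x(V)$ for $x\prec\sigma$, while the mirror of $(\star)$ gives the reverse containment $\Proj_y(V)\subseteq\support_\Sigma(V)$. Dually, any $v\in\bigcap_x\ker\Proj_x$ kills every $\Proj_\sigma$ and hence $\support_\Sigma$, while $(\star)$ supplies $\ker\support_\Sigma\subseteq\bigcap_x\ker\Proj_x$. These inclusions deliver the third bullet.

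\emph{Acyclicity and the general case.} To upgrade from the $H_0$-computation to a full resolution for finite $\Sigma$, I construct a chain-level contracting homotopy of the augmented complex $\Cell(\Sigma,\Gamma)\to\support_\Sigma(V)\to 0$. Fix a base vertex $y_0\in\Sigma$ and define $h\colon V_\sigma\to V_{[y_0,\sigma]}$ by $v\mapsto\Proj_{y_0}(v)$ whenever $y_0\notin\sigma$ is adjacent to $\sigma$ (with appropriate signs reflecting the chosen orientations), and extend by zero on the remaining summands. Convexity of $\Sigma$ together with the fact that $[y_0,\sigma]\in\Hull(y_0,\sigma)$ ensures $[y_0,\sigma]\in\Sigma$ when needed, and the identity $\partial h + h\partial = \Id - \alpha'\alpha$ (with $\alpha'$ a section of the augmentation) reduces via \ref{consistent_simplex_commute}--\ref{consistent_simplex_convex} to combinatorial cancellations of the same flavour as $(\star)$. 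The cochain version follows from Lemma~\ref{lem:opposite_category}. For an arbitrary convex $\Sigma$, the first bullet is deduced from the finite case since $\Cell(-,\Gamma)$ commutes with filtered colimits and countable coproducts in $\Abcat$ are exact; the second bullet is dual, using exactness of countable products.

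\emph{Main obstacle.} The heart of the proof, and by far its most delicate step, is the alternating-sum identity $(\star)$. When $\sigma$ is adjacent to $y$, $\Proj_y\Proj_\sigma$ collapses to $\Proj_{[y,\sigma]}$ by \ref{consistent_simplex_commute}; but when $\sigma$ is not adjacent to $y$, one must invoke \ref{consistent_simplex_convex} with auxiliary vertices $\omega\in\Hull(y,\sigma)$, and the resulting alternating sum collapses to the single surviving term $\Proj_y$ (corresponding to $\sigma=\{y\}$) only after a nontrivial combinatorial cancellation. My strategy is induction on the number of polysimplices of $\Sigma$ lying outside the closed star of $y$: at each step, I remove an extremal polysimplex $\sigma_0$ chosen so that $\Sigma\setminus\{\sigma_0\}$ remains convex, and show that the extra contribution $(-1)^{\dim\sigma_0}\Proj_y\Proj_{\sigma_0}$ produced by the removal is cancelled, via \ref{consistent_simplex_convex} applied at a vertex $\omega\in\Hull(y,\sigma_0)$ adjacent to $y$, by the contribution from a suitable nearby polysimplex. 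Verifying that such extremal $\sigma_0$ always exist and that the cancellation mechanism is robust enough to carry the induction through is the principal combinatorial challenge of the proof.
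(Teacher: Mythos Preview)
Your overall architecture---first the support-projection identity for finite $\Sigma$, then acyclicity, then pass to limits---matches the paper. But two of your three steps have genuine gaps.

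\medskip
\textbf{The identity $(\star)$.} Your inductive scheme, removing one extremal polysimplex~$\sigma_0$ at a time, does not work as written. If $\Sigma' = \Sigma\setminus\{\sigma_0\}$ is still convex and $(\star)$ holds for~$\Sigma'$, then
\[
\Proj_y\support_\Sigma = \Proj_y\support_{\Sigma'} + (-1)^{\dim\sigma_0}\Proj_y\Proj_{\sigma_0}
= \Proj_y + (-1)^{\dim\sigma_0}\Proj_y\Proj_{\sigma_0},
\]
and there is no reason for $\Proj_y\Proj_{\sigma_0}$ to vanish or to be ``cancelled by a nearby polysimplex'': that nearby polysimplex is already in~$\Sigma'$ and its contribution has been absorbed into~$\Proj_y$ by the inductive hypothesis. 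The paper avoids this by a direct grouping argument. For fixed~$y$, it introduces the map $m(\sigma)$ = the minimal face $\tau\prec\sigma$ with $\sigma\in\Hull(y,\tau)$ (Lemma~\ref{lem:minimal_face}), uses~\ref{consistent_simplex_convex} to rewrite $\Proj_y\Proj_\sigma = \Proj_y\Proj_{m(\sigma)}$, and then groups the sum in~$\Proj_y\support_\Sigma$ by the value of~$m(\sigma)$. For each $\tau$ in the image of~$m$, the fibre $\{\sigma\in\Sigma: m(\sigma)=\tau\}$ is an interval $[\tau,\omega]$ (Lemma~\ref{lem:maximal_cone} plus admissibility of~$\Sigma$), and $\omega\neq\tau$ unless $\tau=y$; the alternating sum over a nontrivial interval vanishes, leaving only the $\tau=y$ term.

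\medskip
\textbf{Acyclicity.} The contracting homotopy you propose is the cone-at-$y_0$ homotopy, and it only contracts $\Star(y_0)$. On any $\sigma$ whose closed star misses~$y_0$, you have set $h=0$, and likewise $h\partial$ and $\partial h$ vanish on~$V_\sigma$; so $(\partial h + h\partial)|_{V_\sigma}=0$, which is not $\Id$ since $\alpha'\alpha$ also vanishes there in positive degree. No amount of invoking \ref{consistent_simplex_commute}--\ref{consistent_simplex_convex} repairs this: those conditions relate products $\Proj_\sigma\Proj_\tau$, but your $h$ produces no such products on far-away simplices. The paper does not construct a contracting homotopy at all. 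It first treats the case where $\Sigma$ is a single polysimplex directly (the $\Proj_x$ commute there, so one decomposes~$V$ by the mutually orthogonal idempotents $\Proj_I^0$ indexed by subsets $I\subseteq\Sigma^\circ$; consistency forces $\Proj_I^0=0$ unless~$I$ is a face, reducing to the homology of a simplex with constant coefficients). It then runs a Mayer--Vietoris induction: any finite convex (in fact, admissible) $\Sigma$ that is not a single polysimplex can be cut by a wall into $\Sigma_+\cup\Sigma_-$ with $\Sigma_\pm$ and $\Sigma_0=\Sigma_+\cap\Sigma_-$ again admissible, and~\eqref{eq:homology_finite} for the three smaller pieces yields it for~$\Sigma$ via the long exact sequence together with Corollary~\ref{cor:support_additive}.

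\medskip
Your treatment of the infinite case is essentially the paper's; note that the paper phrases the colimit as a homotopy colimit to control the $\varinjlim^1$ term, which is what the exactness hypothesis on countable coproducts is really buying.
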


Here we define \(\sum_{x\in\Sigma^\circ} \Proj_x(V)\) as the image of the map \(\bigoplus_{x\in\Sigma^\circ} \Proj_x(V)\to V\) and \(\bigcap_{x\in\Sigma^\circ} \ker(\Proj_x)\) as the infimum of \(\ker(\Proj_x)\) for \(x\in\Sigma^\circ\), which is the kernel of the map \(V\to \prod_{x\in\Sigma^\circ} \Proj_x(V)\).

\begin{remark}
  \label{rem:warn_projlim_intersect}
  Although \(\bigcap_{x\in\Sigma^\circ} \ker(\Proj_x) \cong \varprojlim_{\Sigma_\fin\in I} {} \bigcap_{x\in\Sigma_\fin^\circ} \ker(\Proj_x)\), we usually have
  \[
  \varprojlim_{\Sigma_\fin\in I} {}\biggl(V \Bigm/ \bigcap_{x\in\Sigma_\fin^\circ} \ker \Proj_x \biggr) \not\cong V \Bigm/ \bigcap_{x\in\Sigma^\circ} \ker(\Proj_x),
  \]
  already for irreducible smooth representations on \(\Q\)\nb-vector spaces.  The right hand side is a smooth representation in this case, while the cohomology of \(\DCell(\Sigma,\hat\Gamma)\) is a rough representation of~\(\G_\LF\) by Lemma~\ref{lem:cochain_smooth_chain_rough}.  But infinite-dimensional irreducible smooth representations are not rough (see also Proposition~\ref{pro:cohomology_roughening_Noetherean}).
\end{remark}

Theorem~\ref{the:main_theorem_resolution} is the main result of this article.  Its proof fills Section~\ref{sec:proof_resolution}.  The first assertion is the most important one and generalises results in \cites{Schneider-Stuhler:Rep_sheaves, Vigneras:Cohomology}.  The assertions about sheaf cohomology and its comparison with cosheaf homology appear to be new.  In our categorical formulation, they are equivalent to the corresponding statements about cosheaf homology.

\subsection{Some examples of consistent systems of idempotents}
\label{sec:examples_consistent}

Now we consider some special cases of Theorem~\ref{the:main_theorem_resolution}.  In these applications, \(\Abcat\) is a category of modules or vector spaces.  First we consider the case where \(\Proj_x=\av{\CG_x}\) for compact open subgroups \(\CG_x\subseteq\G_\LF\).

\begin{lemma}
  \label{lem:consistent_subgroups}
  Let \((\CG_x)_{x\in\Buil^\circ}\) be a system of compact open pro-\(p\)-subgroups of~\(\G_\LF\) and let~\(V\) be \(\Z[\nicefrac{1}{p}]\)-linear, so that the representation of~\(\G_\LF\) on~\(V\) integrates to a representation \(\Hecke\to\Endo(V)\).  Assume
  \begin{enumerate}[resume,label=\textup{(\alph{*})}]
  \item \label{consistent_group_commute} \(\CG_x\cdot \CG_y= \CG_y\cdot \CG_x\) if \(x\) and~\(y\) are adjacent;
  \item \label{consistent_group_convex} \(\CG_z\subseteq \CG_x\cdot \CG_y\) if \(x,y,z\in\Buil^\circ\), \(z\in\Hull(x,y)\), and~\(z\) is adjacent to~\(x\);
  \item \label{consistent_group_equivariant} \(g\CG_xg^{-1} = \CG_{gx}\) for all \(g\in\G_\LF\), \(x\in\Buil\).
  \end{enumerate}
  Then the system of idempotents \(\Proj_x\defeq \av{\CG_x}\) is consistent and equivariant,
  and
  \[
  \CG_\sigma = \prod_{\twolinesubscript{x\in\Buil^\circ}{x\prec\sigma}} \CG_x
  \]
  for a polysimplex~\(\sigma\) in~\(\Buil\) is a compact open subgroup of~\(\G_\LF\) with \(\Proj_\sigma=\av{\CG_\sigma}\).

  Conversely, \ref{consistent_group_commute}--\ref{consistent_group_equivariant} are necessary for \((\Proj_x)\) to be consistent as left multiplication operators on~\(\Hecke\).
\end{lemma}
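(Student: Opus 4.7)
The whole argument rests on one convolution identity: for compact open pro-$p$-subgroups $\CG_1,\CG_2\subseteq\G_\LF$, a direct integral yields $\chi_{\CG_1}\ast\chi_{\CG_2}=\mu(\CG_1\cap\CG_2)\chi_{\CG_1\CG_2}$, so whenever $\CG_1\CG_2=\CG_2\CG_1$ (equivalently, $\CG_1\CG_2$ is a subgroup), the measure computation $\mu(\CG_1\CG_2)=\mu(\CG_1)\mu(\CG_2)/\mu(\CG_1\cap\CG_2)$ gives $\av{\CG_1}\av{\CG_2}=\av{\CG_1\CG_2}$. With this in hand, assumption~\ref{consistent_group_commute} immediately yields \ref{consistent_commute} of Definition~\ref{def:consistent_idempotents}, and \ref{consistent_group_equivariant} yields \ref{consistent_equivariant} via $\pi_g\av{\CG_x}\pi_g^{-1}=\av{g\CG_xg^{-1}}$, using unimodularity of~\(\G_\LF\) to see $\mu(g\CG_xg^{-1})=\mu(\CG_x)$.

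For \ref{consistent_convex} I would avoid a pure Hecke-algebra manipulation (which devolves into awkward measure ratios like $\mu(\CG_x\CG_z\cap\CG_y)/\mu(\CG_x\cap\CG_y)$) and work at the vector level. Since both sides of $\Proj_x\Proj_z\Proj_y=\Proj_x\Proj_y$ factor through $\Proj_y$, it suffices to show $\Proj_x\Proj_z w=\Proj_x w$ for a $\CG_y$-invariant vector $w$. Partition $\CG_z$ into left cosets $t_i(\CG_z\cap\CG_y)$; by $\CG_y$-invariance of~$w$, $\Proj_z w=[\CG_z:\CG_z\cap\CG_y]^{-1}\sum_i\pi_{t_i}w$. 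Assumption~\ref{consistent_group_convex} lets me write each $t_i=a_ib_i$ with $a_i\in\CG_x,\ b_i\in\CG_y$, so $\pi_{t_i}w=\pi_{a_i}w$, and each of these is absorbed by $\Proj_x$ since $a_i\in\CG_x$. This is the main technical step, and the key trick is to exploit $\CG_y$-invariance at precisely the right moment.

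For the polysimplex part, the vertices of~$\sigma$ are pairwise adjacent, so the subgroups $\CG_x$ for $x\prec\sigma$ pairwise commute in the sense of~\ref{consistent_group_commute}; an easy induction shows that the product of finitely many pairwise commuting compact open subgroups is again a compact open subgroup, independent of the order of the factors. To verify the pro-$p$ property I reduce to two commuting pro-$p$ factors $\CG_1,\CG_2$ with $H=\CG_1\CG_2$ a group: for any open normal $N\triangleleft H$, one has $H/N=(\CG_1N/N)(\CG_2N/N)$ with each factor isomorphic to $\CG_i/(\CG_i\cap N)$, hence of $p$-power order, so $[H:N]$ is a power of~$p$. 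Iterated application of the convolution identity then gives $\Proj_\sigma=\av{\CG_\sigma}$. Finally, for the converse I let $(\av{\CG_x})$ act on~$\Hecke$ by left multiplication and compare supports: $\av{\CG_x}\av{\CG_y}$ is supported on $\CG_x\CG_y$, so \ref{consistent_commute} forces \ref{consistent_group_commute}; the identity $\Proj_x\Proj_z\Proj_y=\Proj_x\Proj_y$ forces $\CG_x\CG_z\CG_y=\CG_x\CG_y$, hence $\CG_z\subseteq\CG_x\CG_y$, giving \ref{consistent_group_convex}; and \ref{consistent_group_equivariant} follows from the same $\pi_g\av{H}\pi_g^{-1}=\av{gHg^{-1}}$ formula already used.
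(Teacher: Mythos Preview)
Your proof is correct and, apart from the step \ref{consistent_group_convex}$\Rightarrow$\ref{consistent_convex}, follows the paper's argument closely: the convolution identity $\av{\CG_1}\av{\CG_2}=\av{\CG_1\CG_2}$, the support comparison for the converse, and the inductive description of $\CG_\sigma$ are all as in the paper. You also add an explicit verification that $\CG_\sigma$ is pro-$p$, which the paper leaves implicit.

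The one genuine difference is your treatment of \ref{consistent_group_convex}$\Rightarrow$\ref{consistent_convex}. You work at the vector level, decomposing $\CG_z$ into cosets modulo $\CG_z\cap\CG_y$ and using $\CG_y$-invariance of~$w$; this is correct and self-contained. The paper instead stays in the Hecke algebra and observes that $\av{\CG_x}\cdot gh\cdot\av{\CG_y}=\av{\CG_x}\av{\CG_y}$ whenever $g\in\CG_x$ and $h\in\CG_y$, so averaging over $\CG_z\subseteq\CG_x\CG_y$ gives $\av{\CG_x}\av{\CG_z}\av{\CG_y}=\av{\CG_x}\av{\CG_y}$ in one line. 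Your concern about ``awkward measure ratios'' turns out to be unfounded: the paper's trick sidesteps any computation of $\mu(\CG_x\CG_z\cap\CG_y)$ entirely by absorbing the $\CG_x$- and $\CG_y$-parts of each element of $\CG_z$ directly into the flanking idempotents. Both routes are short; the paper's is marginally slicker, while yours makes the mechanism more explicit.
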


Here we use the naive product of subsets
\[
A\cdot B \defeq \{a\cdot b\mid a\in A, b\in B\}
\qquad\text{for \(A,B\subseteq\G_\LF\).}
\]

\begin{proof}
  Since~\(V\) is a module over~\(\Hecke\) and the latter acts faithfully on itself, it suffices to show that the idempotents~\(\Proj_x\) satisfy Conditions \ref{consistent_commute}--\ref{consistent_equivariant} if and only if the subgroups~\(\CG_x\) satisfy \ref{consistent_group_commute}--\ref{consistent_group_equivariant}.

  Since \(\Proj_x\Proj_y\) and \(\Proj_y\Proj_x\) are supported on \(\CG_x\CG_y\) and \(\CG_y\CG_x\), 
respectively, \ref{consistent_group_commute} is necessary for Condition~\ref{consistent_commute} in Definition~\ref{def:consistent_idempotents}.  Conversely, if \(\CG_x\CG_y=\CG_y\CG_x\), then this is a compact open subgroup and \(\Proj_x\Proj_y=\av{\CG_x\CG_y}\).  The same argument yields the description of~\(\Proj_\sigma\) for a polysimplex~\(\sigma\).  The equivalence between \ref{consistent_equivariant} and~\ref{consistent_group_equivariant} is manifest.  Condition~\ref{consistent_group_convex} implies~\ref{consistent_convex} because \(\av{\CG_x}\cdot gh\cdot \av{\CG_y}= \av{\CG_x}\av{\CG_y}\) if \(g\in \CG_x\), \(h\in \CG_y\).  Conversely, the convolutions \(\av{\CG_x}\av{\CG_y}\) and \(\av{\CG_x}\av{\CG_z}\av{\CG_y}\) are supported in \(\CG_x\cdot \CG_y\) and \(\CG_x\cdot \CG_z\cdot \CG_y\), respectively.  Hence \(\Proj_x\Proj_z\Proj_y=\Proj_x\Proj_y\) implies \(\CG_x\cdot \CG_y = \CG_x\cdot \CG_z\cdot \CG_y \supseteq \CG_z\).
\end{proof}

Conditions \ref{consistent_group_commute} and~\ref{consistent_group_equivariant} are enough to get a cosheaf on the building.  We need~\ref{consistent_group_convex} to compute the homology of \(\Cell(\Buil,\Gamma)\).

It is rather easy to find systems of subgroups satisfying only \ref{consistent_group_commute} and~\ref{consistent_group_equivariant}.  Let \(\StabP_A \subseteq \G_\LF\) for a subcomplex~\(A\) of~\(\Buil\) denote the subgroup of all \(g\in\G_\LF\) that fix all simplices in~\(A\).  For each orbit in \(\G_\LF\backslash \Buil^\circ\), pick a representative \(x\in\Buil^\circ\) and a subgroup~\(\CG_x\) of~\(\StabP_{\Star x}\) that is normal in~\(\StabP_x\) (recall that the star of~\(x\) consists of all polysimplices in \(\Buil\) that are adjacent to~\(x\)); extend this to all of~\(\Buil^\circ\) by \(\CG_{gx} = g\CG_xg^{-1}\) for \(g\in\G_\LF\).  This makes sense because~\(\CG_x\) is normal in~\(\StabP_x\), and satisfies~\ref{consistent_group_equivariant} by construction.  If \(x\) and~\(y\) are adjacent, then \(\CG_y\subseteq\StabP_{\Star y}\subseteq \StabP_x\) normalises~\(\CG_x\), so that \(g\CG_x = \CG_xg\) for all \(g\in \CG_y\).  This yields~\ref{consistent_group_commute}.

The subgroups considered by Schneider and Stuhler satisfy~\ref{consistent_group_convex}; \cite{Vigneras:Cohomology}*{Lemma 1.28} checks this only for points on the straight line between \(x\) and~\(y\), but the same argument works if we merely assume \(z\in \Hull(x,y)\).

\smallskip

Now let~\(\G_\LF\) be the general linear group~\(\Gl_d(\LF)\) for some \(d\in\N \).  We denote its affine Bruhat--Tits building by~\(\BuilGl\).  A special feature of this group is that it acts transitively on the vertices of~\(\BuilGl\).  Hence an equivariant system of idempotents \((\Proj_x)_{x\in\BuilGl^\circ}\) is already specified by a single idempotent.

First we recall the structure of~\(\BuilGl\).  Let~\(\integ\) be the maximal compact subring of~\(\LF\) and let~\(\maxid\) be the maximal ideal in~\(\integ\).  Let~\(q\) be the cardinality of the residue field \(\integ/\maxid\).  Let \(\unif\in\maxid\) be a uniformiser, that is, \(\maxid = \unif\cdot\integ\).  We write \(\maxid^n\defeq \unif^n\cdot\integ\) for \(n\in\Z\).

A \emph{lattice} in~\(\LF^d\) is an \(\integ\)\nb-submodule of~\(\LF^d\) isomorphic to the \emph{standard lattice}~\(\integ^d\).  Two lattices \(\Lambda\) and~\(\Lambda'\) are \emph{equivalent}, \(\Lambda\simeq\Lambda'\), if there is \(x\in\LF^\inv\) with \(x\cdot\Lambda=\Lambda'\); they are \emph{adjacent} if there is \(x\in\LF^\inv\) with \(\unif x\cdot\Lambda\subseteq\Lambda'\subseteq x\cdot\Lambda\).  The group~\(\Gl_d(\LF)\) acts on the set of lattices in an obvious way, preserving the relations of equivalence and adjacency.

Vertices of~\(\BuilGl\) are equivalence classes \([\Lambda]\) of such lattices.  The vertices \([\Lambda_1]\), \dots, \([\Lambda_k]\) form a (non-degenerate) simplex in~\(\BuilGl\) if and only if~\(\Lambda_i\) is adjacent but not equivalent to~\(\Lambda_j\) for all \(i,j=1,\dotsc,k\) with \(i\neq j\).  A \(k\)\nb-simplex adjacent to~\([\Lambda]\) is equivalent to a flag \(V_0\subsetneq V_2\subsetneq\dotsb\subsetneq V_k\) in the \(\integ/\maxid\)-vector space \(\integ/\maxid\cdot\Lambda\).

Since any lattice is of the form \(g\cdot\integ^d\) for some \(g\in\Gl_d(\LF)\), the group~\(\Gl_d(\LF)\) acts transitively on the set of vertices.  The stabiliser of \([\integ^d]\) is \(Z\cdot\Gl_d(\integ)\), where
\[
Z\defeq \{x\cdot 1_d\mid x\in\LF^\inv\}\cong \Gl_1(\LF)
\]
denotes the centre of~\(\Gl_d(\LF)\).  An equivariant system of idempotents \((\Proj_{[\Lambda]})_{\Lambda}\) is already specified by the single idempotent \(\Proj\defeq \Proj_{[\integ^d]}\).  This projection must commute with~\(\Gl_d(\integ)\), and any \(\Gl_d(\integ)\)\nb-equivariant idempotent endomorphism~\(\Proj\) of~\(V\) generates an equivariant system of idempotent endomorphisms by~\(\Proj_{[g\integ^n]} \defeq g\Proj g^{-1}\).  It remains to analyse when this equivariant system of idempotent endomorphisms is consistent.  In all applications we care about, \(\Proj\) comes from an idempotent in \(\Hecke\bigl(\Gl_d(\integ)\bigr)\) and the consistency conditions already hold in the ring~\(\Hecke\bigl(\Gl_d(\LF)\bigr)\).  We assume this from now on.

Apartments in the building~\(\BuilGl\) correspond to unordered bases \(\{b_1,\dotsc,b_d\}\) in~\(\LF^d\); the vertices of the apartment for the basis \(\{b_1,\dotsc,b_d\}\) are the lattices of the form \(\sum_{j=1}^d \maxid^{n_j}b_j\) for \(n_1,\dotsc,n_d\in\Z\).  The inequalities \(n_1\ge n_2\ge \dotsb \ge n_d\) define a positive chamber in this apartment.  Let~\(D^+\) be the set of all diagonal matrices in~\(\Gl_d\) in the chosen basis with entries \((\unif^{n_1},\dotsc,\unif^{n_d})\) with \(n_1\ge n_2\ge \dotsb \ge n_d\).  Then the vertices of this positive chamber are \([g\integ^n]\) with \(g\in D^+\).

Since the consistency conditions are compatible with the group actions, it suffices to check Conditions \ref{consistent_commute} and~\ref{consistent_convex} in the special case \(x=[\integ^n]\).  We may also assume that~\(y\) belongs to the positive chamber in the apartment associated to the standard basis of~\(\LF^d\), that is, \(y=\bigl[\sum_{j=1}^d \maxid^{n_j}b_j\bigr]\) with \(n_1,\dotsc,n_d\in\Z\) and \(n_1\ge n_2\ge \dotsb \ge n_d\), because the \(\Gl_d(\integ)\)-orbit of~\(y\) contains a lattice of this form.  The vertex \(y=\bigl[\sum_{j=1}^d \maxid^{n_j}b_j\bigr]\) is adjacent to \([\integ^n]\) if and only if \(n_1-n_d=1\) or, equivalently, we are dealing with the lattice \([\Omega_l\integ^d]\), where~\(\Omega_l\) is the diagonal matrix with~\(l\) entries~\(\unif\) and \(d-l\) entries~\(1\) for some~\(l\).  Thus~\ref{consistent_commute} amounts to
\begin{equation}
  \label{eq:consistent_commute_Gld}
  \Proj \Omega_l\Proj\Omega_l^{-1} = \Omega_l\Proj\Omega_l^{-1} \Proj\qquad
  \text{for \(l=1,\dotsc,d-1\)}.
\end{equation}
Actually, it suffices to establish this for \(l\le \nicefrac{d}{2}\) because if \(l>\nicefrac{d}{2}\) there is \(g\in\Gl_d\) with \(g\integ^n= \Omega_{d-l}\integ^n\) and \(g\Omega^l\integ^n= \unif\cdot\integ^n\).

Condition~\eqref{eq:consistent_commute_Gld} holds if~\(\Proj\) is supported in the normal subgroup \(1+\Mat_d(\maxid)\) of~\(\Gl_d(\integ)\).  Then \(\Omega_l\Proj\Omega_l^{-1}\) is supported in~\(\Gl_d(\integ)\) for all~\(l\).  Thus \(\Omega_l\Proj\Omega_l^{-1}\) commutes with~\(\Proj\) because the latter is assumed central in~\(\Gl_d(\integ)\).  It is unclear whether there is an idempotent~\(\Proj\) not supported in \(1+\Mat_d(\maxid)\) that satisfies~\eqref{eq:consistent_commute_Gld}.

If~\(z\) is a vertex in \(\Hull(x,y)\), then~\(z\) belongs to the same positive chamber in the same apartment, that is, \(z=\bigl[\sum_{j=1}^d \maxid^{m_j}b_j\bigr]\) with \(m_1,\dotsc,m_d\in\Z\) and \(m_1\ge m_2\ge \dotsb \ge m_d\).  It belongs to~\(\Hull(x,y)\) if \(m_i-m_j \le n_i-n_j\) for \(1\le i\le j\le d\).  Equivalently, there are \(g,h\in D^+\) with \(z=h[\integ^n]\) and \(y=gz\).  Therefore, the condition \(\Proj_x\Proj_z\Proj_y = \Proj_x\Proj_y\) for all \(x,y,z\in\BuilGl^\circ\) with \(z\in\Hull(x,y)\) is equivalent to
\begin{equation}
  \label{eq:consistent_convex_Gld}
  \Proj g\Proj\cdot \Proj h\Proj = \Proj gh\Proj\qquad
  \text{for all \(g,h\in D^+\)},
\end{equation}
that is, the map \(D^+\to\Endo(V)\), \(g\mapsto \Proj g\Proj\) is multiplicative.  We may restrict here to~\(z\) adjacent to~\(x\), that is, \(h=\Omega_l\) for some~\(l\) because these elements generate the monoid~\(D^+\).

Equation~\eqref{eq:consistent_convex_Gld} for special projections is frequently used in representation theory; for instance, see~\cite{Casselman:Representations_padic_groups}*{Lemma 4.1.5}.  In particular, it is well-known and easy to check that the idempotent \(\av{U^{(r)}}\) associated to the compact open subgroup
\[
U^{(r)}\defeq 1 + \Mat_d(\maxid^{r+1})
\qquad\text{for } r\in\Z_{\geq 0}
\]
satisfies these conditions.  These subgroups are pro-\(p\)-groups, so that~\(\av{U^{(r)}}\) belongs to \(\Hecke\bigl(\Gl_d,\Z[\nicefrac1p]\bigr)\), and they are normal in~\(\Gl_d(\integ)\) and contained in \(1+\Mat_d(\maxid^{r+1})\), the stabiliser of the star of \([\integ^d]\).  We have already remarked above that this is enough to get a system of open subgroups~\((\Proj_x)_{x\in\BuilGl^\circ}\) satisfying \ref{consistent_group_equivariant} and~\ref{consistent_group_commute}.  It is known also that \(U^{(r)}gU^{(r)}hU^{(r)} = U^{(r)}ghU^{(r)}\) for \(g,h\in D^+\).  This implies~\eqref{eq:consistent_convex_Gld} and hence~\ref{consistent_group_convex}, that is, we have a consistent equivariant system of subgroups~\((U^{(r)}_x)_{x\in\BuilGl^\circ}\).

More explicitly, these subgroups for vertices are
\[
U^{(r)}_{[\Lambda]} \defeq \{g\in \Gl_d(\LF)\mid (g-1)(\Lambda) \subseteq \maxid^{r+1} \cdot\Lambda\}
\]
because this system of subgroups is equivariant and \(U^{(r)}_{[\integ^d]}=U^{(r)}\).  Let~\(\sigma\) be an \(l\)\nb-simplex in~\(\Buil\), let \(\Lambda_0\supsetneq\Lambda_1\supsetneq\dotsb\supsetneq\Lambda_{l-1}\supseteq \Lambda_l=\maxid\cdot\Lambda_0\) be lattices in~\(\LF^d\) that represent its vertices.  Then
\[
U^{(r)}_\sigma = \{g\in\Gl_d(\LF) \mid \text{\((g-1)(\Lambda_{j-1})\subseteq \maxid^r \cdot\Lambda_j\) for \(j=1,\dotsc,l\)}\}.
\]
To check this, notice first that this system of subgroups is \(\Gl_d(\LF)\)\nb-equivariant, so that it suffices to treat one representative in each orbit.  We pick the representatives \(\Lambda_j= \Omega_{k_j}\cdot\integ^d\) with \(0=k_0<k_1<\dotsb<k_l=d\).  In this case, \((g_{ij})\) belongs to \(\prod_{j=0}^{l-1} \Omega_lU^{(r)}\Omega_l^{-1}\) if and only if \(g_{ij}-\delta_{ij}\in\maxid^r \) for \(i\le k_n<j\) for some~\(n\) and \(g_{ij}-\delta_{ij}\in\maxid^{r+1} \) otherwise.  This yields exactly~\(U^{(r)}_\sigma\).

\smallskip

Finally, we let~\(\G_\LF\) be a semi-simple \(p\)\nb-adic group (the generalisation to reductive groups is easy but complicates notation).  The following situation is considered in the theory of types (see~\cite{Bushnell-Kutzko:Types}).

Let \(x\in\Buil^\circ\) be a vertex.  Let \(\StabP_x\subseteq\G_\LF\) be its stabiliser; this is a compact open subgroup of~\(\G_\LF\) because~\(\G\) is semi-simple.  Let~\(\rho\) be an irreducible representation of~\(\StabP_x\); assume that the central projection~\(\Proj_x\) in \(\Hecke(\StabP_x,\Q)\) associated to~\(\rho\) acts on~\(V\) (this is the case if~\(V\) is a \(\Q\)\nb-vector space or if \(\Proj_x\in \Hecke\bigl(\StabP_x,\Z[\nicefrac{1}{p}]\bigr)\) and~\(V\) is a \(\Z[\nicefrac{1}{p}]\)-module).

We view \(\Hecke(\StabP_x)\) as a subalgebra of~\(\Hecke\).  Assume that \(\Proj_xg\Proj_x=0\) if \(g\in\G_\LF\) and \(gx\neq x\).  This ensures that \(\Proj_{gx}\defeq g\Proj_xg^{-1}\) for \(g\in\G_\LF\) and \(\Proj_y=0\) for other vertices defines an equivariant consistent system of idempotents with \(\Proj_\sigma=0\) for all polysimplices of dimension at least~\(1\).  The conditions of Proposition~\ref{pro:consistent} are obvious here because \(\Proj_\sigma\Proj_\tau=0\) unless \(\sigma=\tau\) and \(\dim\sigma=0\).

The cellular chain complex \(\Cell(\Sigma,\Gamma)\) is concentrated in dimension~\(0\) and has
\[
\Ho_0(\Sigma,\Gamma)
= \Cell[0](\Sigma,\Gamma)
\cong \cInd_{\StabP_x}^{\G_\LF} \Proj_x(V),
\]
where \(\cInd\) denotes the compactly supported induction functor:
\[
\cInd_{\StabP_x}^{\G_\LF} \Proj_x(V) =
\{f\colon \G_\LF\to \Proj_x(V) \mid \text{\(\supp f\) is compact}\}^{\StabP_x},
\]
where~\(\StabP_x\) acts by \((g\cdot f)(x) \defeq \pi_g f(xg)\) for \(x\in\G_\LF\), \(g\in\StabP_x\), and~\(\G_\LF\) acts by \((g\cdot f)(x) \defeq f(g^{-1}x)\).  Thus the first half of Theorem~\ref{the:main_theorem_resolution} asserts that \(\cInd_{\StabP_x}^{\G_\LF} \Proj_x(V)\) is isomorphic to the subrepresentation of~\(V\) generated by \(\Proj_x(V)\); for the sheaf cohomology, we get
\[
\Ho^0(\Sigma,\hat{\Gamma})
= \DCell[0](\Sigma,\hat{\Gamma})
\cong \Ind_{\StabP_x}^{\G_\LF} \Proj_x(V),
\]
where \(\Ind\) denotes the induction functor without support restrictions (and without smoothening).  Notice that~\(\G_\LF\) acts roughly and not smoothly on \(\Ho^0(\Sigma,\hat{\Gamma})\).

\subsection{Support projections}
\label{sec:support_projections}

This section prepares the proof of Theorem~\ref{the:main_theorem_resolution} by computing support projections for certain finite subcomplexes of the building.  These projections are interesting in their own right and will be used in Section~\ref{sec:Serre}.  We fix~\(V\) and a consistent system of idempotents \((\Proj_x)\) in \(\Endo(V)\).

\begin{definition}
  \label{def:support_projection}
  Let~\(\Sigma\) be a subcomplex of the building.  A \emph{support projection} for~\(\Sigma\) is an idempotent element \(\support_\Sigma\in\Endo(V)\) with
  \[
  \im(\support_\Sigma) = \sum_{x\in\Sigma^\circ} \im(\Proj_x),\qquad
  \ker(\support_\Sigma) = \bigcap_{x\in\Sigma^\circ} \ker(\Proj_x).
  \]
\end{definition}

Since \(\im(p)\oplus \ker(p)=V\) for any idempotent endomorphism~\(p\) of~\(V\), a support projection exists if and only if
\[
V = \sum_{x\in\Sigma^\circ} \im(\Proj_x) \oplus \bigcap_{x\in\Sigma^\circ} \ker(\Proj_x),
\]
and it is unique if it exists.  It is clear that \(\support_{\Sigma}\le\support_{\Sigma'}\) for \(\Sigma\subseteq\Sigma'\).

It is not clear whether a support projection exists for general~\(\Sigma\).  If, say, \(p\) and~\(q\) are two rank~\(1\) idempotent \(2\times2\)-matrices with \(\ker p=\ker q\) but \(\im p\neq \im q\), then there is no idempotent \(2\times2\)-matrix with kernel \(\ker p\bigcap\ker q\) and image \(\im p+\im q\).  For a set \(\{p_i\}_{i\in I}\) of self-adjoint projections on Hilbert space, there is a unique self-adjoint projection with kernel \(\bigcap \ker(p_i)\); but its image is the \emph{closure} of \(\sum \im(p_i)\), and there is no simple formula that expresses it using the given projections~\(p_i\).

We are going to show that the support projection of a finite convex subcomplex exists and is given by a straightforward formula.  Our method applies to more general subcomplexes of the building.  To understand the necessary and sufficient condition for this, we first need two geometric lemmas about hulls.

\begin{lemma}
  \label{lem:minimal_face}
  Let~\(\sigma\) and~\(x\) be a polysimplex and a vertex in~\(\Buil\).  Then there is a unique minimal face~\(\tau\) of~\(\sigma\) with \(\sigma\in\Hull(x,\tau)\).  That is, a face~\(\omega\) of~\(\sigma\) satisfies \(\sigma\in\Hull(x,\omega)\) if and only if \(\omega\succ\tau\).
\end{lemma}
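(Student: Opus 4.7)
I would work in an apartment $A$ of $\Buil$ containing both $x$ and $\sigma$ (one exists by the building axioms) and construct $\tau_0$ there. Let $p\in\realise{A}$ be the Euclidean projection of $x$ onto $\overline{\sigma}$, and let $\sigma_p\prec\sigma$ be the unique face whose relative interior contains~$p$. Writing $\sigma=\prod_i\sigma_i$ as a product of simplices and correspondingly $\sigma_p=\prod_i(\sigma_p)_i$, I define $\tau_0\defeq\prod_i\tau_{0,i}$, where $\tau_{0,i}\prec\sigma_i$ is the face spanned by those vertices of $\sigma_i$ that do not belong to $(\sigma_p)_i$. Independence of~$A$ follows from the building axioms: any two apartments containing $\{x\}\cup\sigma$ meet in a convex subcomplex containing $\Hull(x,\sigma)$ and are related by an isometric isomorphism fixing it pointwise, which preserves Euclidean projection and hence~$\tau_0$.

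The easier direction is minimality. Suppose $\omega\prec\sigma$ does not contain $\tau_0$, and pick a vertex $v\prec\tau_0$ with $v\not\prec\omega$. Some coordinate $v_{i_0}$ of~$v$ then satisfies $v_{i_0}\not\prec\omega_{i_0}$, and by construction $v_{i_0}$ is the vertex of $\sigma_{i_0}$ opposite to a codim-$1$ face $F_{i_0}$ of $\sigma_{i_0}$ whose induced codim-$1$ face $F=\prod_{j\ne i_0}\sigma_j\times F_{i_0}$ of $\sigma$ is \emph{blocking}: the supporting wall of $F$ bounds a half-apartment containing $x$ while $\sigma$ is strictly on the other side. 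Since $v_{i_0}\not\prec\omega_{i_0}$ gives $\omega_{i_0}\prec F_{i_0}$ and hence $\omega\prec F$, this half-apartment contains $\{x\}\cup\omega$ but not $\sigma$, so the characterisation of $\Hull$ as an intersection of half-apartments yields $\sigma\notin\Hull(x,\omega)$.

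For the existence direction $\sigma\in\Hull(x,\tau_0)$, I would show that no half-apartment of~$A$ contains $\{x\}\cup\tau_0$ while excluding~$\sigma$. Suppose such a half-apartment exists, bounded by a wall~$W$, and set $F_W\defeq\overline{\sigma}\cap W$, a face of $\sigma$ with $\tau_0\prec F_W$. The main obstacle is that~$W$ need not be the supporting wall of a codim-$1$ face of $\sigma$, so $F_W$ can have arbitrary codimension. My approach is to combine the projection property of~$p$ --- that $x-p$ lies in the outward normal cone of $\overline{\sigma}$ at~$\sigma_p$ --- with $x$ lying on the side of~$W$ opposite to~$\sigma$ in order to exhibit a codim-$1$ face $F$ of $\sigma$ with $F_W\prec F$ whose supporting wall is itself blocking. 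The definition of $\tau_0$ then forces the vertex of $\sigma$ opposite to $F$ (in the factor where $F$ loses a dimension) to lie in $\tau_0$, whereas $\tau_0\prec F_W\prec F$ would place that same vertex in $F$, contradicting ``opposite''.
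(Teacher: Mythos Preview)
Your construction of~\(\tau_0\) via the Euclidean projection is not correct: it produces a face with \(\sigma\in\Hull(x,\tau_0)\), but in general not the minimal one, so the minimality half of your argument cannot go through as stated.

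Here is a concrete counterexample in an \(\tilde A_2\)-apartment.  Take the chamber~\(\sigma\) with vertices \(v_0=(0,0)\), \(v_1=(1,0)\), \(v_2=(\tfrac12,\tfrac{\sqrt3}{2})\), and let \(x=(0,-\sqrt3)\), which is a vertex of the apartment.  The Euclidean projection of~\(x\) onto~\(\overline\sigma\) is \(p=v_0\), so \(\sigma_p=\{v_0\}\) and your \(\tau_0=[v_1,v_2]\).  However, using barycentric coordinates \(a_0,a_1,a_2\) (the simple affine roots bounding~\(\sigma\)), one computes \(a_0(x)=2\), \(a_1(x)=1\), \(a_2(x)=-2\), and from this it is easy to check that \(\sigma\in\Hull(x,v_2)\).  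Thus the minimal face is~\(\{v_2\}\), strictly smaller than your~\(\tau_0\).  Correspondingly, your ``blocking'' claim fails: for the vertex \(v_1\prec\tau_0\), the opposite codimension-\(1\) face is \(e_1=[v_0,v_2]=\{a_1=0\}\), and \(a_1(x)=1>0\) places~\(x\) on the \emph{same} side of this wall as the interior of~\(\sigma\), so~\(e_1\) is not blocking at all.

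The underlying reason is that if \(p\) lies in a face~\(\sigma_p\), the vector \(x-p\) lies in the normal cone at~\(\sigma_p\), which is spanned by the outward normals to \emph{all} facets through~\(\sigma_p\); but \(x-p\) may have zero coefficient along some of those normals.  For such a facet~\(F\) (containing~\(\sigma_p\) but with zero coefficient), the obtuse angles between simple roots force \(x\) to the \(\sigma\)-side of the wall of~\(F\), so~\(F\) is not blocking even though~\(\sigma_p\prec F\).  In short, Euclidean projection is too coarse: whether a facet of~\(\sigma\) separates~\(x\) from~\(\sigma\) is governed by the sign of the corresponding simple affine root at~\(x\), and that sign is not recoverable from the face carrying the projection.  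The paper's proof works directly with these signs: after choosing a chamber \(\Delta\supseteq\sigma\) and adjusting it so that \(\affRoot_j(x)\ge0\) whenever \(\affRoot_j|_\sigma=0\), it sets \(\tau=\Delta\cap\bigcap_{j\in I}\ker\affRoot_j\) for \(I=\{j:\affRoot_j(x)>0\text{ or }\affRoot_j|_\sigma=0\}\), and then both directions follow by expressing an arbitrary affine root as a same-sign combination of the~\(\affRoot_j\).

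Your existence direction is only a sketch; the step ``exhibit a codim-\(1\) face \(F\) with \(F_W\prec F\) whose supporting wall is itself blocking'' is precisely the place where the same obtuse-angle phenomenon would bite, and you have not indicated how to control it.
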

  \begin{figure}[htbp]
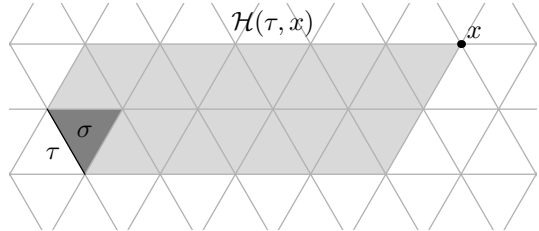

    \centering
\begin{asy}
unitsize(.5cm);
pair z[];//path of points in the building
//vertices of simplex tau
z[0]=2*a[1];
z[1]=(2*a[1]+a[2]);
z[2]=(2*a[1]+a[1]);
//vertex x
z[3]=(4*a[1]+4*a[0]);

//further vertices of convex hull
z[4]=(2*a[1]+4*a[0]);
z[5]=4*a[1]-a[0];
z[6]=3*a[1]-a[0];

//draw the convex hull of x and tau
fill(z[0]--z[4]--z[3]--z[5]--z[6]--cycle,white*.85);
//draw sigma
fill(z[0]--z[1]--z[2]--cycle,white*.5);

//draw the walls of the building:
for(i=-18;i<30;++i){
  draw((i*a[0])--(i*a[0]+20*a[1]),white*.7);
  draw((i*a[0])--(i*a[0]+20*a[2]),white*.7);
}
for(i=0;i<20;++i){
  draw((i*a[2]-10*a[0])--(i*a[2]+(20+i)*a[0]),white*.7);
}

//clip to rectangular region:
clip((0,2)--(14,2)--(14,8)--(0,8)--cycle);

//draw and label tau
draw("\(\tau\)", z[0]--z[1],SW);
dot("\(x\)",z[3],NE);
//label sigma
label("\(\sigma\)",.3333*z[0]+.333*z[1]+.333*z[2]);

//label region
label("\(\mathcal{H}(\tau,x)\)",.5*z[3]+.5*z[5],N);
\end{asy}
    \caption{Illustration of Lemma~\ref{lem:minimal_face} and the first half of Lemma~\ref{lem:maximal_cone}}
    \label{fig:minimal_face}
  \end{figure}
\begin{proof}
  Fix an apartment~\(\Apart\) containing~\(\sigma\) and~\(x\).  If the underlying affine root system is reducible, then we split \(\Apart=\prod_{i=1}^n \Apart_i\) and \(\sigma= \prod_{i=1}^n \sigma_i\).  If \(\tau_i\subseteq\sigma_i\) solves the problem in~\(\Apart_i\), then the polysimplex \(\tau\defeq \prod \tau_i\) 
solves it in~\(\Apart\).  Hence we may assume that~\(\Apart\) is irreducible.

  Let~\(\Delta\) be a chamber containing~\(\sigma\) and let \(\affRoot_0,\dotsc,\affRoot_d\) be the corresponding simple affine roots with \(\Delta = \bigcap_{j=0}^d \affRoot_j^{\ge0}\).  If there is~\(j\) with \(\affRoot_j|_\sigma=0\) and \(\affRoot_j(x)<0\), then we reflect~\(\Delta\) at the corresponding wall.  The new chamber has fewer~\(j\) with \(\affRoot_j|_\sigma=0\) and \(\affRoot_j(x)>0\).  After finitely many steps, we achieve that \(\affRoot_j(x)\ge0\) for all~\(j\) with \(\affRoot_j|_\sigma=0\).  Faces of~\(\Delta\) correspond to subsets~\(I\) of \(\{0,\dotsc,d\}\) via \(I\mapsto \Delta\cap\bigcap_{j\in I} \ker(\affRoot_j)\).  This yields a face of~\(\sigma\) if \(j\in I\) for all~\(j\) with \(\affRoot_j|_\sigma=0\).  Let~\(I\) be the subset of all~\(j\) with \(\affRoot_j(x)>0\) or \(\affRoot_j|_\sigma=0\).  We claim that the corresponding face~\(\tau\) of~\(\sigma\) satisfies \(\sigma\in\Hull(\tau,x)\) and is minimal with this property.

  Let \(\omega\prec\sigma\) satisfy \(\affRoot_j|_\omega=0\) for some \(j\notin I\).  Then \(\affRoot_j(x)\le0\) and hence \(\affRoot_j|_{\Hull(x,\omega)}\le0\), so that \(\sigma\notin\Hull(x,\omega)\).  Therefore, if \(\sigma\in\Hull(x,\omega)\) then \(\tau\prec\omega\).  Conversely, we claim that \(\sigma\in\Hull(x,\tau)\).  Let~\(\varaffRoot\) be any affine root with \(\varaffRoot(x)\ge0\) and~\(\varaffRoot|_\tau\ge0\).  We must show \(\varaffRoot|_\sigma\ge0\).  This is clear if~\(\varaffRoot|_\tau>0\) or \(\varaffRoot|_\sigma=0\), so that we may assume that~\(\varaffRoot\) vanishes on~\(\tau\) but not on~\(\sigma\).  We have \(\varaffRoot=\sum_{j=0}^d \lambda_j\affRoot_j\) with coefficients~\(\lambda_j\) of the same sign.  Since \(\varaffRoot|_\tau=0\), we have \(\lambda_j=0\) for \(j\notin I\).  Since~\(\varaffRoot|_\sigma\neq0\), some~\(\lambda_j\) with \(\affRoot_j|_\sigma\neq0\) is non-zero.  Since \(\affRoot_j(x)>0\) for this~\(j\) and \(\affRoot_k(x)\ge0\) all \(k\in I\), we have \(\lambda_j\ge0\) for \(j\in I\).  Hence \(\varaffRoot|_\sigma\ge0\).
\end{proof}

\begin{lemma}
  \label{lem:maximal_cone}
  Let~\(\tau\) and~\(x\) be a polysimplex and a vertex in~\(\Buil\).  Then there is a unique maximal polysimplex \(\sigma\in\Hull(x,\tau)\) with \(\tau\prec\sigma\) \textup(see also Figure~\textup{\ref{fig:minimal_face})}.  That is, a polysimplex~\(\omega\) satisfies \(\omega\in\Hull(x,\tau)\) and \(\tau\prec\omega\) if and only if \(\tau\prec\omega\prec\sigma\).

  Moreover, if \(\sigma=\tau\) then there is a proper face~\(\omega\) of~\(\tau\) with \(\tau\in\Hull(x,\omega)\).
\end{lemma}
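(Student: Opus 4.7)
The plan is to adapt the strategy used in Lemma~\ref{lem:minimal_face}. I begin by fixing an apartment \(\Apart\) containing both \(\tau\) and \(x\), and if the affine root system of \(\Apart\) is reducible I decompose \(\Apart=\prod\Apart_i\) and correspondingly \(\tau=\prod\tau_i\), \(x=(x_i)\); the desired \(\sigma\) is then the product of the \(\sigma_i\) obtained factor by factor, so I may assume \(\Apart\) is irreducible. In the irreducible case, polysimplices \(\omega\succeq\tau\) correspond, in a small neighbourhood of the interior of \(\tau\), to cells of the arrangement of walls of \(\Apart\) containing \(\tau\); this local bijection is the combinatorial device I will exploit.

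Next I define \(\sigma\) by prescribing, for each wall \(H\) of \(\Apart\) containing \(\tau\), a sign condition: if \(x\in H\) then \(\sigma\subseteq H\), and if \(x\notin H\) then \(\sigma\) lies strictly on the same side of \(H\) as \(x\). These conditions single out a unique polysimplex \(\sigma\succeq\tau\); equivalently, when \(x\notin\mathrm{aff}(\tau)\), \(\sigma\) is the polysimplex whose interior contains \(p+\varepsilon(x-p)\) for \(p\) in the interior of \(\tau\) and \(\varepsilon>0\) sufficiently small. The inclusion \(\sigma\subseteq\Hull(x,\tau)\) is then immediate, since any closed half-apartment \(H^+\) containing \(\{x\}\cup\tau\) also contains \(\sigma\) by the prescribed signs. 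The maximality is the converse: if \(\omega\succ\tau\) lies in \(\Hull(x,\tau)\), then for each wall \(H\) through \(\tau\) not containing \(x\) the half-apartment \(H^+\ni\{x\}\cup\tau\) forces \(\omega\) to the \(x\)-side, and for each wall \(H\) through \(\tau\) containing \(x\) both \(H^+\) and its opposite \(H^-\) are relevant half-apartments, forcing \(\omega\subseteq H\); thus \(\omega\) satisfies the same sign conditions as \(\sigma\), so \(\omega\prec\sigma\).

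For the moreover clause, suppose \(\sigma=\tau\). The construction then forces every wall of \(\Apart\) through \(\tau\) to contain \(x\); since the intersection of all walls containing \(\tau\) in an apartment of an affine Coxeter complex is exactly \(\mathrm{aff}(\tau)\), this gives \(x\in\mathrm{aff}(\tau)\). To produce the required face \(\omega\prec\tau\) I would take a proper face ``opposite'' to \(x\): if \(x\) is itself a vertex of \(\tau\), let \(\omega\) be the face spanned by the remaining vertices; otherwise \(x\in\mathrm{aff}(\tau)\setminus\bar\tau\), and writing \(x=\sum\lambda_iv_i\) as an affine combination of the vertices of \(\tau\), I take \(\omega\) to be the face spanned by those \(v_i\) with \(\lambda_i\le0\). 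A direct affine-root check then shows that every affine root nonnegative on \(\{x\}\cup\omega\) is nonnegative on \(\tau\), which gives \(\tau\in\Hull(x,\omega)\).

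The main obstacle is the moreover: while the construction and maximality of \(\sigma\) are essentially formal consequences of the sign calculus for walls through \(\tau\), the existence of the proper face \(\omega\) is not formal. It requires the geometric input that the intersection of walls through \(\tau\) equals \(\mathrm{aff}(\tau)\) in an affine Coxeter complex, as well as a case distinction depending on whether \(x\) lies in \(\bar\tau\) or strictly outside, and a verification that the chosen \(\omega\) really forces \(\tau\) to appear in every apartment containing \(\{x\}\cup\omega\).
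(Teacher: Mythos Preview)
Your argument for the existence and uniqueness of the maximal~\(\sigma\) is correct, and it differs from the paper's.  You build~\(\sigma\) explicitly from the sign pattern of~\(x\) on the walls through~\(\tau\); the paper instead shows that any two polysimplices \(\omega_1,\omega_2\) with \(\tau\prec\omega_i\) and \(\omega_i\in\Hull(x,\tau)\) are adjacent (a short root-separation argument by contradiction), so that \([\omega_1,\omega_2]\) is again of the same type and a maximal one exists.  Your construction is more explicit; the paper's is shorter and avoids the reduction to the irreducible case.

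The \emph{moreover} clause, however, has a genuine gap.  You correctly deduce \(x\in\mathrm{aff}(\tau)\), and your choice of~\(\omega\) is reasonable, but the claimed ``direct affine-root check'' that \(\tau\in\Hull(x,\omega)\) is neither direct nor carried out.  When \(|\{i:\lambda_i>0\}|>1\), the inequality \(\sum_{j:\lambda_j>0}\lambda_j\,a(v_j)\ge 0\) that you obtain does \emph{not} force each \(a(v_j)\ge 0\) for arbitrary affine functionals; one must use that~\(a\) is an affine root, and you give no indication how.  (In fact pure convexity cannot work here: already in an \(\tilde A_2\)\nb-apartment one can have \(\tau\not\subseteq\mathrm{conv}(\{x\}\cup\omega)\) for your~\(\omega\), yet \(\tau\in\Hull(x,\omega)\) holds because the separating hyperplane is not a wall.)  The paper bypasses this entirely with a one-line geodesic argument: let~\(\varphi\) be the geodesic from~\(x\) to an interior point~\(p\) of~\(\tau\); since \(\sigma=\tau\), the tail of~\(\varphi\) lies in~\(\tau\), so~\(\varphi\) can be prolonged past~\(p\) until it meets a proper face~\(\omega\prec\tau\).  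Then \(p\in\Hull(x,\omega)\) because~\(p\) lies on the geodesic between~\(x\) and a point of~\(\omega\), and since~\(p\) is interior to~\(\tau\) this gives \(\tau\in\Hull(x,\omega)\).  This avoids both the case distinction and the unperformed root computation.
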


\begin{proof}
  Let \(\omega_1\) and~\(\omega_2\) be two polysimplices contained in \(\Hull(x,\tau)\) and containing~\(\tau\).  We must show that they are adjacent, that is, they are both faces of a polysimplex~\(\omega\).  If not, then they are separated by an affine root~\(\affRoot\), say, \(\affRoot|_{\omega_1}>0\) and \(\affRoot|_{\omega_2}<0\).  This implies \(\affRoot|_\tau=0\).  If \(\affRoot(x)\ge0\), then~\(\affRoot\) separates \(x\) and~\(\tau\) from~\(\omega_2\), contradicting \(\omega_2\in\Hull(x,\tau)\).  If \(\affRoot(x)<0\), then~\(\affRoot\) separates \(x\) and~\(\tau\) from~\(\omega_1\), contradicting \(\omega_1\in\Hull(x,\tau)\).  Hence \(\omega_1\) and~\(\omega_2\) are adjacent.  Of course, \([\omega_1,\omega_2]\) is still contained in \(\Hull(x,\tau)\).

  Now assume \(\sigma=\tau\).  Let \(\varphi\colon [0,1]\to\Sigma\) be the geodesic between~\(x\) and an interior point of~\(\tau\).  The points \(\varphi(t)\) for \(t\approx1\) belong to a polysimplex~\(\sigma'\) with \(\sigma'\in\Hull(x,\tau)\) and \(\tau\prec\sigma'\).  Since \(\sigma=\tau\), we have \(\varphi(t)\in\abs{\tau}\) for \(t\approx1\).  Then we may prolong~\(\varphi\) to a geodesic beyond \(\varphi(1)\) until it hits a proper face~\(\omega\) of~\(\tau\).  Since \(\Hull(\omega,x)\) contains~\(\varphi(1)\), an interior point of~\(\tau\), we get \(\tau\in\Hull(\omega,x)\) for some \(\omega\prec\tau\) with \(\omega\neq\tau\).
\end{proof}

Our criterion for support projections requires an analogue of Lemma~\ref{lem:maximal_cone} for the subcomplex~\(\Sigma\) instead of~\(\Buil\):

\begin{definition}
  \label{def:admissible}
  A subcomplex \(\Sigma\subseteq\Buil\) is called \emph{admissible} if it has the following two properties:
  \begin{itemize}
  \item For any polysimplex \(\tau\in\Buil\), \(\Sigma\cap\tau\) is again a polysimplex or empty.

  \item Let \(x\in\Sigma^\circ\) and \(\tau\in\Sigma\).  If \(\tau\neq x\) and~\(\tau\) has no proper face~\(\omega\) with \(\tau\in\Hull(x,\omega)\), then~\(\tau\) is a proper face of a polysimplex in \(\Sigma \cap \Hull(x,\tau)\).
  \end{itemize}
\end{definition}
The first condition is equivalent to the following requirement:
if \(x_1,\dotsc,x_n\) are adjacent vertices in~\(\Sigma\),
then~\(\Sigma\) contains the polysimplex \([x_1,\dotsc,x_n]\)
that they span.  Thus admissible subcomplexes are determined by
the vertices they contain.

Figures \ref{fig:admissible} and~\ref{fig:forbidden_admissible}
illustrate admissible subcomplexes.  The first figure shows an
example of an admissible subcomplex and two examples of
non-admissible subcomplexes that violate the first and second
condition in Definition~\ref{def:admissible}, respectively.
Figure~\ref{fig:forbidden_admissible} illustrates the second
condition.  If an admissible subcomplex of an
\(\tilde{A}_2\)\nb-apartment contains~\(a\) but not \(b\)
and~\(c\), then it may not contain any points in the first
forbidden region.  If it contains \(x\) and~\(y\) but
not~\(z\), then it may not contain any points in the second
forbidden region.

\begin{figure}[htbp]
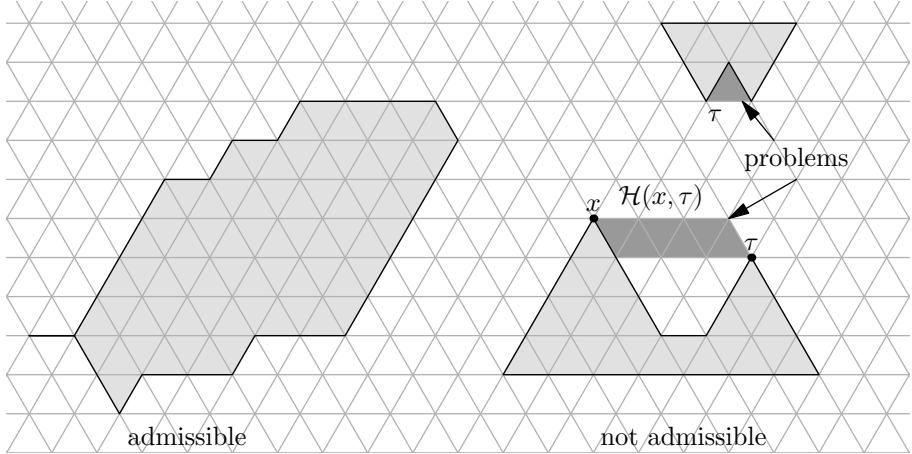

  \centering
\begin{asy}

path admissible;
path nonadmissible1;
path nonadmissible2;

unitsize(.3cm);
pair z[];//path of points in the building
z[0]=2a[0]+a[1];
z[1]=z[0]+2a[2];
z[15]=z[1]-a[0];
z[2]=z[1]+4a[1];
z[3]=z[2]+a[0];
z[4]=z[3]+a[1];
z[5]=z[4]+a[0];
z[6]=z[5]+a[1];
z[7]=z[6]+3a[0];
z[8]=z[7]-a[2];
z[9]=z[8]-5a[1];
z[10]=z[9]-2a[0];
z[11]=z[10]-a[1];
z[12]=z[11]-2a[0];
admissible=(z[0]--z[1]--z[15]--z[1]--z[2]--z[3]--z[4]--z[5]--z[6]--z[7]--z[8]--z[9]--z[10]--z[11]--z[12]--cycle);

z[20]=10a[0]+2a[1];
z[21]=z[20]+7a[0];
z[22]=z[21]+3a[2];//tau
z[23]=z[22]-2a[1];
z[24]=z[23]-a[0];
z[25]=z[24]+3a[2];//x
nonadmissible1=z[20]--z[21]--z[22]--z[23]--z[24]--z[25]--cycle;

z[30]=11a[0]+9a[1];
z[31]=z[30]+a[1];
z[32]=z[31]-a[2];
z[33]=z[32]+2a[1];
z[34]=z[33]-3a[0];
nonadmissible2=z[30]--z[31]--z[32]--z[33]--z[34]--cycle;

//fill all relevant regions first
fill(admissible, white*.88);//admissible region
fill(nonadmissible1, white*.88);//first non-admissible region
fill(nonadmissible2, white*.88);//second non-admissible region
fill(z[22]--(z[22]+a[2])--z[25]--(z[25]-a[2])--cycle, white*.60);//problematic region
fill(z[30]--z[31]--z[32]--cycle, white*.60);//problematic region

//draw the walls of the building:
for(i=-18;i<30;++i){
  draw((i*a[0])--(i*a[0]+20*a[1]),white*.7);
  draw((i*a[0])--(i*a[0]+20*a[2]),white*.7);
}
for(i=0;i<20;++i){
  draw((i*a[2]-10*a[0])--(i*a[2]+(20+i)*a[0]),white*.7);
}
//clip walls to rectangular region:
clip((0,0)--(40,0)--(40,20)--(0,20)--cycle);

// draw boundaries of admissible and non-admissible regions
draw(admissible);
draw(nonadmissible1);
draw(nonadmissible2);

//label special points and domains
dot("$x$",z[25],N);
dot("$\tau$",z[22],N);
label("$\mathcal{H}(x,\tau)$",z[25]+1.5a[0],N);
label("$\tau$",z[30]+.2a[0],S);

//indicate problems
draw(z[22]+2a[1]..z[22]+a[2],Arrow);
draw(z[22]+2a[1]+a[2]..z[32]-.2a[0],Arrow);
label("problems",z[22]+2a[1],N);

//legend
label("admissible",4a[0],N);
label("not admissible",15a[0],N);
\end{asy}
  \caption{Admissible and non-admissible subcomplexes of an \(\tilde{A}_2\)\nb-apartment}
  \label{fig:admissible}
\end{figure}

\begin{figure}[htbp]
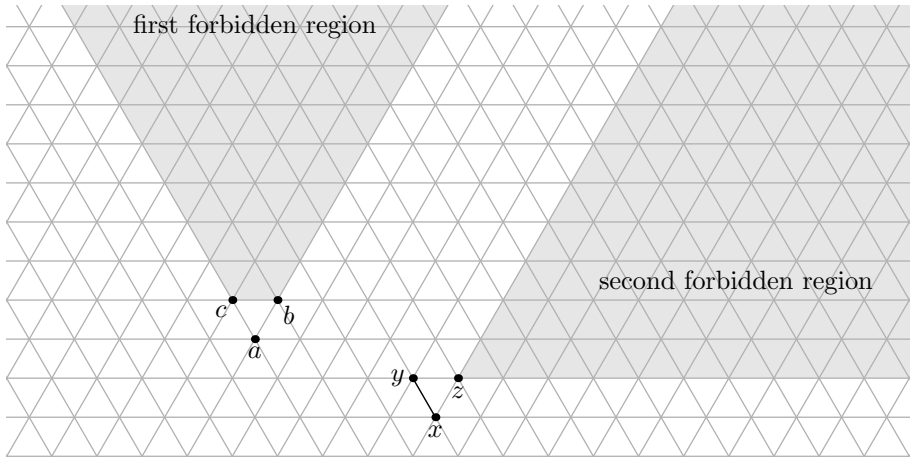

\begin{asy}
unitsize(.3cm);
pair z[];//various points in the building
z[0]=9a[0]+a[1];//x
z[1]=z[0]+a[2];//y
z[2]=z[0]+a[1];//z
z[3]=4a[0]+3a[1];//a
z[4]=z[3]+a[1];//b
z[5]=z[3]+a[2];//c

fill(z[2]+20a[0]--z[2]--z[2]+30a[1]--cycle, white*.9);//forbidden region 1
fill(z[3]+20a[2]--z[5]--z[4]--z[3]+20a[1]--cycle, white*.9);//forbidden region 2

//draw the walls of the building:
for(i=-18;i<30;++i){
  draw((i*a[0])--(i*a[0]+20*a[1]),white*.7);
  draw((i*a[0])--(i*a[0]+20*a[2]),white*.7);
}
for(i=0;i<20;++i){
  draw((i*a[2]-10*a[0])--(i*a[2]+(20+i)*a[0]),white*.7);
}

draw(z[0]--z[1]);
dot("$x$",z[0],S);
dot("$y$",z[1],W);
dot("$z$",z[2],S);
dot("$a$",z[3],S);
dot("$b$",z[4],SE);
dot("$c$",z[5],SW);
label("second forbidden region",z[2]+2a[0]+2a[1],NE);
label("first forbidden region",z[3]+4a[1]+4a[2]);

//clip to rectangular region:
clip((0,0)--(40,0)--(40,20)--(0,20)--cycle);
\end{asy}

  \caption{Forbidden regions for admissible subcomplexes of an apartment}
  \label{fig:forbidden_admissible}
\end{figure}

\begin{lemma}
  \label{lem:convex_admissible}
  Convex subcomplexes are admissible.  If~\(\Sigma_1\) is convex and~\(\Sigma_2\) is admissible, then \(\Sigma_1\cap\Sigma_2\) is admissible.
\end{lemma}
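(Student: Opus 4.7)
The plan is to verify the two admissibility axioms separately, first for a convex subcomplex $\Sigma$ (establishing that convex implies admissible) and then for $\Sigma_1\cap\Sigma_2$ with $\Sigma_1$ convex and $\Sigma_2$ admissible. In both cases the second axiom is nearly immediate via Lemma~\ref{lem:maximal_cone}; the main obstacle is the first axiom in the convex case, which requires unwinding the product structure of polysimplices in an apartment.

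For the convex case, second axiom: given $x\in\Sigma^\circ$ and $\tau\in\Sigma$ with $\tau\neq x$ and no proper face $\omega\prec\tau$ satisfying $\tau\in\Hull(x,\omega)$, the ``moreover'' clause of Lemma~\ref{lem:maximal_cone} produces a polysimplex $\sigma$ strictly above $\tau$ with $\sigma\in\Hull(x,\tau)\subseteq\Sigma$ by convexity.

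For the convex case, first axiom: take $\tau\in\Buil$ with $V\defeq\tau^\circ\cap\Sigma\neq\emptyset$ and fix an apartment $\Apart$ containing $\tau$. Decompose $\Apart=\prod_i\Apart_i$ into irreducible Coxeter factors, so that $\tau=\prod_i\tau_i$ and the walls of $\Apart$ factor through individual components. For any $v,w\in V$, the hull $\Hull(v,w)=\prod_i\Hull(v_i,w_i)$ lies in $\Sigma$ by convexity and contains all coordinate mixings of $v$ and $w$; being vertices of $\tau$ in $\Sigma$, these mixings lie in $V$. Iterating this observation over finitely many pairs shows $V=\prod_iV_i$ is a product set, so the face of $\tau$ spanned by $V$ is $\omega=\prod_i\omega_i$ with $\omega_i\preceq\tau_i$ spanned by $V_i$. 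A further application of the same product formula gives $\Hull(V)=\prod_i\Hull(V_i)$, and within each simplex $\tau_i$ the factor hull $\Hull(V_i)$ is precisely $\omega_i$; hence $\omega\in\Hull(V)\subseteq\Sigma$. Conversely, any face of $\tau$ in $\Sigma$ has its vertices among $V$ and lies below $\omega$, so $\Sigma\cap\tau=\omega$.

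For $\Sigma_1\cap\Sigma_2$, the first axiom follows from $(\Sigma_1\cap\Sigma_2)\cap\tau=(\Sigma_1\cap\tau)\cap(\Sigma_2\cap\tau)$, the meet of two faces of $\tau$ (each a polysimplex or empty by admissibility of $\Sigma_1$ and $\Sigma_2$), which is again a face or empty since the face lattice of a polysimplex admits binary meets, computed componentwise in the decomposition $\tau=\prod_i\tau_i$. For the second axiom, given $x\in(\Sigma_1\cap\Sigma_2)^\circ$ and $\tau\in\Sigma_1\cap\Sigma_2$ satisfying the hypotheses, admissibility of $\Sigma_2$ yields $\sigma_2\in\Sigma_2\cap\Hull(x,\tau)$ with $\tau\prec\sigma_2$ strict; Lemma~\ref{lem:maximal_cone} forces $\sigma_2\preceq\sigma$ for the unique maximal polysimplex $\sigma\in\Hull(x,\tau)$ above~$\tau$, and convexity of $\Sigma_1$ gives $\sigma\in\Sigma_1$, whence $\sigma_2\in\Sigma_1\cap\Sigma_2\cap\Hull(x,\tau)$ by closure of $\Sigma_1$ under faces.
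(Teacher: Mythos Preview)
Your argument is correct. The route differs from the paper's in a few places, so a brief comparison is worthwhile.

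For the second admissibility axiom in the convex case, you invoke Lemma~\ref{lem:maximal_cone} directly, whereas the paper re-runs a geodesic argument that essentially duplicates the proof of the ``moreover'' clause of that lemma. Your approach is cleaner here.

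For the first axiom in the convex case, the paper simply declares it ``trivial,'' implicitly relying on the equivalence between combinatorial and geometric convexity (stated just before Example~\ref{exa:convex}): since \(|\Sigma|\cap|\tau|\) is a geometrically convex union of closed faces of a polysimplex, it must be a single closed face. Your explicit combinatorial argument via the product decomposition and coordinate mixings is a legitimate alternative that avoids appealing to the geometric picture. One minor point: you write that \(\Hull(V_i)\) is \emph{precisely} \(\omega_i\); all you actually need and use is \(\omega_i\in\Hull(V_i)\), which follows since the barycenter of \(V_i\) lies in the interior of \(\omega_i\). Also, \(\Hull\) of a finite set is not defined in the paper (only \(\Hull(\sigma,\tau)\) for two polysimplices), though the intended meaning is clear.

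For the second axiom in the intersection case, your detour through Lemma~\ref{lem:maximal_cone} is unnecessary: once \(\Sigma_2\) furnishes \(\sigma_2\in\Sigma_2\cap\Hull(x,\tau)\) with \(\tau\prec\sigma_2\), convexity of \(\Sigma_1\) already gives \(\sigma_2\in\Hull(x,\tau)\subseteq\Sigma_1\) directly. The paper argues exactly this way in one line.
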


\begin{proof}
  Since an intersection of two polysimplices is again a polysimplex or empty, the first property is hereditary for intersections.  Moreover, it is trivial for convex subcomplexes.  The second property is inherited by intersections with convex subcomplexes because all \(\omega\in\Sigma_2\cap \Hull(x,\tau)\) belong to~\(\Sigma_1\) if \(x\in\Sigma_1^\circ\) and \(\tau\in\Sigma_1\).

  We check that a convex subcomplex~\(\Sigma\) satisfies the second condition for admissible subcomplexes.  Let \(x\in\Sigma^\circ\) and \(\tau\in\Sigma\).  Let \(\varphi\colon [0,1]\to\Sigma\) be the geodesic between~\(x\) and an interior point of~\(\tau\).  If \(\varphi (1-\varepsilon)\in\tau\) for sufficiently small \(\varepsilon>0\), we may prolong~\(\varphi\) to a geodesic beyond \(\varphi(1)\) until it hits a face~\(\omega\) of~\(\tau\).  Then \(\Hull(\omega,x)\) contains an interior point of~\(\tau\), so that \(\tau\in\Hull(\omega,x)\) for some \(\omega\prec\tau\) with \(\omega\neq\tau\).  If \(\varphi (1-\varepsilon)\notin\tau\) for all \(\varepsilon>0\), let~\(\omega\) be the minimal polysimplex containing~\(\varphi(1-\varepsilon)\) for all sufficiently small \(\varepsilon>0\).  Then~\(\tau\) is a proper face of~\(\omega\), and \(\omega\in\Hull(\tau,x)\) because an interior point of~\(\omega\) lies on a geodesic between an interior point of~\(\tau\) and~\(x\).
\end{proof}

\begin{theorem}
  Let\label{the:support_projection}~\((\Proj_x)\) be a consistent system of idempotents in \(\Endo(V)\) and let~\(\Sigma\) be a finite convex subcomplex of \(\Buil\) or, more generally, a finite admissible subcomplex of \(\Buil\).  Then
   \[
   \support_\Sigma \defeq \sum_{\sigma\in\Sigma} (-1)^{\dim(\sigma)} \Proj_\sigma
   \]
  is the support projection for~\(\Sigma\).
\end{theorem}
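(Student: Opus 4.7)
The plan is to establish the two identities $\support_\Sigma \Proj_x = \Proj_x$ and $\Proj_x \support_\Sigma = \Proj_x$ for every $x \in \Sigma^\circ$. Once these are in hand, the three required properties follow routinely: if $x$ is a vertex of $\sigma \in \Sigma$ then $\Proj_\sigma = \Proj_x\Proj_\sigma$ by Proposition~\ref{pro:consistent}\ref{consistent_simplex_commute}, so $\support_\Sigma\Proj_\sigma = \Proj_\sigma$ for every $\sigma\in\Sigma$, and summing with signs yields $\support_\Sigma^2 = \support_\Sigma$; the image of $\Proj_x$ is fixed pointwise by $\support_\Sigma$, so $\im(\support_\Sigma) \supseteq \sum_{x\in\Sigma^\circ}\Proj_x(V)$, while the reverse inclusion is manifest from the definition of $\support_\Sigma$; dually, $\support_\Sigma v = 0$ forces $\Proj_x v = \Proj_x \support_\Sigma v = 0$ for every $x\in\Sigma^\circ$, so $\ker(\support_\Sigma) \subseteq \bigcap_{x\in\Sigma^\circ} \ker(\Proj_x)$, and again the reverse inclusion is immediate.

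The core step is $\support_\Sigma \Proj_x = \Proj_x$; the other identity is symmetric. The key reduction is that for every polysimplex $\sigma$ and every vertex $x$ in $\Buil$,
\[
\Proj_\sigma \Proj_x = \Proj_{\tau(\sigma,x)} \Proj_x,
\]
where $\tau(\sigma,x)\prec\sigma$ is the minimal face supplied by Lemma~\ref{lem:minimal_face}. Indeed $\Proj_{\tau(\sigma,x)}\Proj_\sigma = \Proj_\sigma$ since $\tau(\sigma,x)\prec\sigma$, while Proposition~\ref{pro:consistent}\ref{consistent_simplex_convex} applied with middle polysimplex $\sigma$ (which lies in $\Hull(x,\tau(\sigma,x))$ by construction) gives $\Proj_{\tau(\sigma,x)}\Proj_\sigma\Proj_x = \Proj_{\tau(\sigma,x)}\Proj_x$. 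A direct apartment argument further shows that $\tau(\sigma,x)$ is always \emph{$x$-stable}, meaning $\tau(\tau(\sigma,x),x) = \tau(\sigma,x)$: if a proper face $\tau'$ of $\tau := \tau(\sigma,x)$ satisfied $\tau\in\Hull(x,\tau')$, then every apartment containing $x\cup\tau'$ would contain $\tau$ and hence $\sigma$, contradicting the minimality defining $\tau(\sigma,x)$.

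Substituting the reduction and collecting terms by $\tau = \tau(\sigma,x)$ yields
\[
\support_\Sigma \Proj_x = \sum_{\tau \text{ $x$-stable}} N(\tau)\, \Proj_\tau\Proj_x,\qquad N(\tau) := \sum_{\sigma\in\Sigma,\; \tau(\sigma,x)=\tau} (-1)^{\dim\sigma}.
\]
The same apartment argument shows that, for an $x$-stable $\tau$, the condition $\tau(\sigma,x) = \tau$ is equivalent to $\tau\prec\sigma$ together with $\sigma\in\Hull(x,\tau)$; by Lemma~\ref{lem:maximal_cone} these $\sigma$ form the face interval $[\tau,\omega_0(\tau,x)]$ in $\Buil$, where $\omega_0(\tau,x)$ denotes the maximal polysimplex above $\tau$ inside $\Hull(x,\tau)$. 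Using the first clause of Definition~\ref{def:admissible}, $[\tau,\omega_0(\tau,x)]\cap\Sigma$ is the full Boolean lattice on the set $U(\tau)$ of vertices of $\omega_0(\tau,x)$ lying outside $\tau$ but inside $\Sigma^\circ$. Hence $N(\tau) = (-1)^{\dim\tau}$ if $U(\tau)=\emptyset$ and $N(\tau) = 0$ otherwise.

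It remains to show $U(\tau)=\emptyset$ exactly when $\tau=\{x\}$. For $\tau=\{x\}$, the equality $\Hull(x,x)=\{x\}$ forces $\omega_0(\{x\},x)=\{x\}$, so $U(\{x\})=\emptyset$ and $N(\{x\})=1$. Conversely, if $\tau\in\Sigma$ is $x$-stable with $\tau\neq\{x\}$, the second clause of Definition~\ref{def:admissible} provides a polysimplex in $\Sigma\cap\Hull(x,\tau)$ having $\tau$ as a proper face; it lies in $[\tau,\omega_0(\tau,x)]$, and each of its vertices outside $\tau$ belongs to $U(\tau)$, which is therefore non-empty. We conclude $\support_\Sigma\Proj_x = \Proj_{\{x\}}\Proj_x = \Proj_x$. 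The main obstacle is recognising that Lemmas \ref{lem:minimal_face} and~\ref{lem:maximal_cone} are engineered precisely to identify the non-cancelling orbits of polysimplices with Boolean intervals, and that the two clauses of Definition~\ref{def:admissible} are the exact combinatorial input needed to keep those Boolean intervals non-degenerate for every $x$-stable $\tau\neq\{x\}$.
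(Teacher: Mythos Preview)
Your proof is correct and follows essentially the same route as the paper's own argument: reduce to the identities \(\Proj_x\support_\Sigma=\Proj_x=\support_\Sigma\Proj_x\), replace each \(\Proj_\sigma\) by \(\Proj_{\tau(\sigma,x)}\) via Lemma~\ref{lem:minimal_face} and Proposition~\ref{pro:consistent}\ref{consistent_simplex_convex}, group by the \(x\)\nb-stable face~\(\tau\), and use Lemma~\ref{lem:maximal_cone} together with the two clauses of Definition~\ref{def:admissible} to see that each fibre is a Boolean interval whose alternating sum vanishes except at \(\tau=\{x\}\). The only cosmetic differences are that the paper multiplies on the left (\(\Proj_x\support_\Sigma\)) and phrases the cancellation as ``the alternating sum over \(\tau\prec\sigma\prec\omega\) vanishes unless \(\tau=\omega\)'' rather than via your set \(U(\tau)\); it also remarks explicitly that the Boolean description of the interval \([\tau,\omega]\) in the polysimplicial case follows from the product decomposition, a point you use implicitly.
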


It is remarkable that this simple formula for~\(\support_\Sigma\) works although the idempotents~\(\Proj_\sigma\) do not commute.

\begin{proof}
  Define~\(\support_\Sigma\) by the above formula.  Since \(\Proj_\sigma\ge\Proj_\tau\) for \(\sigma\prec\tau\), we clearly have
  \[
  \im(\support_\Sigma) \subseteq \sum_{\sigma\in\Sigma} \im(\Proj_\sigma) = \sum_{x\in\Sigma^\circ} \im(\Proj_x),\qquad
  \ker(\support_\Sigma) \supseteq \bigcap_{\sigma\in\Sigma} \ker(\Proj_\sigma) = \bigcap_{x\in\Sigma^\circ} \ker(\Proj_x).
  \]
  We will prove \(\Proj_x\support_\Sigma = \Proj_x = \support_\Sigma\Proj_x\) for all \(x\in\Sigma^\circ\).  This implies \(\Proj_\sigma\support_\Sigma = \Proj_\sigma = \support_\Sigma\Proj_\sigma\) for all \(\sigma\in\Sigma\) using the definition of~\(\Proj_\sigma\) in Proposition~\ref{pro:consistent}, and then
  \[
  \support_\Sigma\cdot\support_\Sigma
  = \sum_{\sigma\in\Sigma} (-1)^{\abs{\sigma}} \Proj_\sigma \support_\Sigma
  = \sum_{\sigma\in\Sigma} (-1)^{\abs{\sigma}} \Proj_\sigma
  =\support_\Sigma.
  \]
  Furthermore, it follows that
  \[
  \im(\support_\Sigma) \supseteq \sum_{x\in\Sigma^\circ} \im(\Proj_x),\qquad
  \ker(\support_\Sigma) \subseteq \bigcap_{x\in\Sigma^\circ} \ker(\Proj_x),
  \]
  so that~\(\support_\Sigma\) is the support projection of~\(\Sigma\).  Thus it remains to establish \(\Proj_x\support_\Sigma = \Proj_x = \support_\Sigma\Proj_x\) for all \(x\in\Sigma^\circ\).  We only write down the proof of \(\Proj_x\support_\Sigma=\Proj_x\); the other equation is obtained by working in the opposite category.

  Let \(m(\sigma)\) for a polysimplex~\(\sigma\) be the minimal face~\(\tau\) of~\(\sigma\) with \(\sigma\in \Hull(x,\tau)\).  This map is idempotent, that is, \(m(\sigma)\) has the property that \(m(\sigma)\notin \Hull(x,\tau)\) for any proper face~\(\tau\) of~\(m(\sigma)\) because otherwise \(\sigma\in \Hull\bigl(x,m(\sigma)\bigr) = \Hull(x,\tau)\).  Let \(M\subseteq\Sigma\) be the set of all polysimplices of the form \(m(\sigma)\).  The consistency conditions in Proposition~\ref{pro:consistent} imply \(\Proj_x\Proj_\sigma = \Proj_x\Proj_\sigma\Proj_{m(\sigma)} = \Proj_x\Proj_{m(\sigma)}\).  Hence we may rewrite
  \begin{equation}
    \label{eq:Proj_support_compute}
    \Proj_x\support_\Sigma = \sum_{\tau\in M} \sum_{\twolinesubscript{\sigma\in\Sigma}{m(\sigma)=\tau}}
 (-1)^{\dim(\sigma)} \Proj_x\Proj_\tau.
  \end{equation}

  For each \(\tau\in M\), Lemma~\ref{lem:maximal_cone} and the first admissibility assumption on~\(\Sigma\) yield \(\omega\in\Sigma\) such that the set of \(\sigma\in\Sigma\) with \(m(\sigma)=\tau\) is exactly the set of all \(\sigma\in\Buil\) with \(\tau\prec\sigma\prec\omega\): first construct such a maximal~\(\omega\) in~\(\Buil\), then its intersection with~\(\Sigma\) works.  The second admissibility assumption about~\(\Sigma\) yields \(\omega\neq\tau\) or \(\tau=x\) because \(\tau\in M\).  The alternating sum of the dimensions of all polysimplices~\(\sigma\) with \(\tau\prec\sigma\prec\omega\) vanishes for \(\tau\neq\omega\) and is~\(1\) if \(\tau=\omega\).  For simplicial complexes, this is because such intermediate faces correspond bijectively to subsets of \(\omega^\circ\setminus\tau^\circ\).  For polysimplicial complexes, we use the product decomposition to reduce the assertion to the simplicial case.  Hence the summand for \(\tau\in M\) vanishes unless \(\tau=\omega\), that is, \(\tau=x\).  Thus \(\Proj_x\support_\Sigma=\Proj_x\).
\end{proof}

This Theorem implies several properties of support projections and hence of the subspaces \(\sum_{x\in\Sigma^\circ} \im(\Proj_x)\) and \(\bigcap_{x\in\Sigma^\circ} \ker(\Proj_x)\).

\begin{corollary}
  \label{cor:support_additive}
  Let \(\Sigma_+\) and~\(\Sigma_-\) be two finite subcomplexes and let \(\Sigma_0\defeq \Sigma_+\cap\Sigma_-\) and \(\Sigma=\Sigma_+\cup\Sigma_-\).  Assume that all four subcomplexes \(\Sigma_+\), \(\Sigma_-\), \(\Sigma_0\) and~\(\Sigma\) are admissible.  Then \(\support_\Sigma = \support_{\Sigma_+} + \support_{\Sigma_-} - \support_{\Sigma_0}\), \(\support_{\Sigma_+}\support_{\Sigma_-} = \support_{\Sigma_-}\support_{\Sigma_+} = \support_{\Sigma_0}\), and
  \begin{align*}
  \sum_{x\in\Sigma_+^\circ} \im(\Proj_x) \cap \sum_{x\in\Sigma_-^\circ} \im(\Proj_x)
  &= \sum_{x\in\Sigma_0^\circ} \im(\Proj_x),\\
  \bigcap_{x\in\Sigma_+^\circ} \ker(\Proj_x) + \bigcap_{x\in\Sigma_-^\circ} \ker(\Proj_x)
  &= \bigcap_{x\in\Sigma_0^\circ} \ker(\Proj_x).
  \end{align*}
\end{corollary}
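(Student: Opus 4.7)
First I would apply Theorem~\ref{the:support_projection} to each of the four finite admissible subcomplexes \(\Sigma\), \(\Sigma_+\), \(\Sigma_-\), \(\Sigma_0\) to obtain the explicit formula \(\support_A = \sum_{\sigma\in A}(-1)^{\dim\sigma}\Proj_\sigma\).  Inclusion--exclusion on the index sets, using \(\Sigma = \Sigma_+\cup\Sigma_-\) and \(\Sigma_+\cap\Sigma_- = \Sigma_0\), immediately yields the additivity statement \(\support_\Sigma = \support_{\Sigma_+} + \support_{\Sigma_-} - \support_{\Sigma_0}\).

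Next, I would establish a nesting identity: for any two finite admissible \(B\subseteq A\), the equations \(\support_A\support_B = \support_B\support_A = \support_B\) hold.  The proof of Theorem~\ref{the:support_projection} shows \(\Proj_\sigma\support_A = \Proj_\sigma = \support_A\Proj_\sigma\) for every \(\sigma\in A\); expanding \(\support_B = \sum_{\sigma\in B}(-1)^{\dim\sigma}\Proj_\sigma\) and using \(B\subseteq A\) gives both equations termwise.  In particular \(\support_{\Sigma_+}^2 = \support_{\Sigma_+}\), \(\support_{\Sigma_+}\support_{\Sigma_0} = \support_{\Sigma_0}\), and \(\support_{\Sigma_+}\support_\Sigma = \support_{\Sigma_+}\), and analogously for \(\Sigma_-\).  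Multiplying the additivity relation on the left by \(\support_{\Sigma_+}\) now collapses three of the four terms and leaves \(\support_{\Sigma_+}\support_{\Sigma_-} = \support_{\Sigma_0}\); multiplying on the right by \(\support_{\Sigma_+}\) similarly yields \(\support_{\Sigma_-}\support_{\Sigma_+} = \support_{\Sigma_0}\).

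The subspace identities follow abstractly from these two commutation relations in any abelian category.  For the image identity, \(\im(\support_{\Sigma_0})\subseteq \im(\support_{\Sigma_+})\cap\im(\support_{\Sigma_-})\) is immediate from \(\Sigma_0\subseteq\Sigma_\pm\); for the reverse inclusion, a monomorphism \(\iota\colon W\hookrightarrow V\) representing the intersection satisfies \(\support_{\Sigma_+}\iota = \iota = \support_{\Sigma_-}\iota\), hence \(\support_{\Sigma_0}\iota = \support_{\Sigma_+}\support_{\Sigma_-}\iota = \iota\), so \(\iota\) factors through \(\im(\support_{\Sigma_0})\).  The kernel identity is dual --- one may either apply the image identity in \(\Abcat^\op\) via Lemma~\ref{lem:opposite_category}, or write any \(v\in\ker(\support_{\Sigma_0})\) as \(v = (v - \support_{\Sigma_+}v) + \support_{\Sigma_+}v\), noting that the first summand lies in \(\ker(\support_{\Sigma_+})\) while \(\support_{\Sigma_-}\support_{\Sigma_+}v = \support_{\Sigma_0}v = 0\) places the second in \(\ker(\support_{\Sigma_-})\).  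No serious obstacle arises: the whole corollary is algebraic bookkeeping around the explicit formula supplied by Theorem~\ref{the:support_projection} and its internal identity \(\Proj_\sigma\support_A = \Proj_\sigma\).
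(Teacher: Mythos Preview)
Your proof is correct and follows essentially the same route as the paper.  Both derive the additivity \(\support_\Sigma = \support_{\Sigma_+}+\support_{\Sigma_-}-\support_{\Sigma_0}\) by inclusion--exclusion from the explicit formula in Theorem~\ref{the:support_projection}, then use the nesting relation \(\support_B\le\support_A\) for \(B\subseteq A\) to extract \(\support_{\Sigma_+}\support_{\Sigma_-}=\support_{\Sigma_0}\), and finally read off the subspace identities from commuting-idempotent algebra.  The only cosmetic difference is that the paper phrases the second step as ``\(\support_{\Sigma_+}\) and \(\support_{\Sigma_-}-\support_{\Sigma_0}\) are orthogonal idempotents because their sum \(\support_\Sigma\) is idempotent'' (which tacitly uses \(\support_{\Sigma_+}\le\support_\Sigma\), stated just after Definition~\ref{def:support_projection}), whereas you multiply the additivity relation directly by \(\support_{\Sigma_+}\); your version is arguably cleaner since it makes the use of the nesting relation explicit.
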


\begin{proof}
  The formula for~\(\support_\Sigma\) follows immediately from Theorem~\ref{the:support_projection}.  Since \(\support_\Sigma\), \(\support_{\Sigma_+}\), and \(\support_{\Sigma_-}-\support_{\Sigma_0}\) are idempotent, it follows that \(\support_{\Sigma_+}\) and \(\support_{\Sigma_-}-\support_{\Sigma_0}\) are orthogonal idempotents, so that \(\support_{\Sigma_+}\support_{\Sigma_-} = \support_{\Sigma_-}\support_{\Sigma_+} = \support_{\Sigma_0}\).  The assertions about subspaces are special cases of assertions about commuting idempotent operators.
\end{proof}

Let \(\Sigma_+\), \(\Sigma_0\) and~\(\Sigma_-\) be finite admissible subcomplexes of~\(\Buil\).  We say that~\(\Sigma_0\) \emph{separates} \(\Sigma_+\) and~\(\Sigma_-\) if there are finite admissible subcomplexes \(\Sigma_+'\) and~\(\Sigma_-'\) with \(\Sigma_\pm\subseteq\Sigma_\pm'\), \(\Sigma_0=\Sigma_+'\cap\Sigma_-'\), and \(\Sigma_+'\cup\Sigma_-'\) admissible.

\begin{corollary}
  \label{cor:support_separate}
  If~\(\Sigma_0\) separates \(\Sigma_+\) and~\(\Sigma_-\), then \(\support_{\Sigma_+}\support_{\Sigma_-} = \support_{\Sigma_+}\support_{\Sigma_0}\support_{\Sigma_-}\).
\end{corollary}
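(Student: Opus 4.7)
The plan is to unpack the definition of separation and then sandwich Corollary~\ref{cor:support_additive} between $\support_{\Sigma_+}$ and $\support_{\Sigma_-}$. Separation provides finite admissible $\Sigma_\pm'$ with $\Sigma_\pm\subseteq\Sigma_\pm'$, $\Sigma_0=\Sigma_+'\cap\Sigma_-'$, and $\Sigma_+'\cup\Sigma_-'$ admissible. So all four of $\Sigma_+'$, $\Sigma_-'$, $\Sigma_0$, $\Sigma_+'\cup\Sigma_-'$ are admissible, which is exactly the hypothesis of Corollary~\ref{cor:support_additive}. Applying that corollary to the pair $(\Sigma_+',\Sigma_-')$ gives the crucial identity $\support_{\Sigma_+'}\support_{\Sigma_-'}=\support_{\Sigma_0}$.

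The missing link is an absorption lemma: whenever $\Sigma\subseteq\Sigma'$ are finite admissible subcomplexes, one has
\[
\support_{\Sigma}\support_{\Sigma'}=\support_{\Sigma'}\support_{\Sigma}=\support_{\Sigma}.
\]
I would derive this directly from the image/kernel characterisation of support projections in Definition~\ref{def:support_projection}. Since $\Sigma^\circ\subseteq(\Sigma')^\circ$, we have
\[
\im(\support_{\Sigma})=\sum_{x\in\Sigma^\circ}\im(\Proj_x)\subseteq\sum_{x\in(\Sigma')^\circ}\im(\Proj_x)=\im(\support_{\Sigma'}),
\]
and
\[
\ker(\support_{\Sigma'})=\bigcap_{x\in(\Sigma')^\circ}\ker(\Proj_x)\subseteq\bigcap_{x\in\Sigma^\circ}\ker(\Proj_x)=\ker(\support_{\Sigma}).
\]
The first inclusion implies $\support_{\Sigma'}\support_{\Sigma}=\support_{\Sigma}$, because $\support_{\Sigma'}$ is the identity on its own image. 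For the other order, I would decompose $V=\im(\support_{\Sigma'})\oplus\ker(\support_{\Sigma'})$; on the first summand $\support_{\Sigma'}$ is the identity, while $\support_{\Sigma}$ already kills the second summand by the kernel inclusion, giving $\support_{\Sigma}\support_{\Sigma'}=\support_{\Sigma}$.

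With the absorption identity applied to $\Sigma_+\subseteq\Sigma_+'$ and to $\Sigma_-\subseteq\Sigma_-'$, the calculation is a one-line composition:
\[
\support_{\Sigma_+}\support_{\Sigma_-}
=\support_{\Sigma_+}\support_{\Sigma_+'}\support_{\Sigma_-'}\support_{\Sigma_-}
=\support_{\Sigma_+}\support_{\Sigma_0}\support_{\Sigma_-},
\]
where the first equality uses absorption on both ends and the second uses Corollary~\ref{cor:support_additive}. There is essentially no obstacle here once the framework of Section~\ref{sec:support_projections} is in place; the only point that requires a moment's care is the absorption identity, which is not stated explicitly in the excerpt but follows immediately from the defining properties of support projections.
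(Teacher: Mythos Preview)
Your argument is correct and matches the paper's proof essentially line for line: the paper writes \(\support_{\Sigma_\pm}\le\support_{\Sigma_\pm'}\) (an inequality it declares ``clear'' immediately after Definition~\ref{def:support_projection}, for exactly the image/kernel reasons you spell out), invokes Corollary~\ref{cor:support_additive} to get \(\support_{\Sigma_+'}\support_{\Sigma_-'}=\support_{\Sigma_0}\), and then performs the same one-line sandwich computation. The only difference is that you prove the absorption identity in detail where the paper takes it as evident.
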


\begin{proof}
  We have \(\support_{\Sigma_\pm} \le \support_{\Sigma_\pm'}\).  By the previous corollary, \(\support_{\Sigma_+'}\support_{\Sigma_-'} = \support_{\Sigma_0}\).  Hence \(\support_{\Sigma_+} \support_{\Sigma_-} = \support_{\Sigma_+} \support_{\Sigma_+'} \support_{\Sigma_-'} \support_{\Sigma_-} = \support_{\Sigma_+} \support_{\Sigma_0} \support_{\Sigma_-}\).
\end{proof}

In particular, this applies to \(\Proj_x=\support_x\) for a vertex~\(x\) of~\(\Buil\), and we get \(\Proj_x\support_\Sigma\Proj_y = \Proj_x\Proj_y\) if~\(\Sigma\) separates \(x\) and~\(y\).

\subsection{Proof of Proposition~\ref{pro:consistent}}
\label{sec:proof_consistent}

This section establishes that consistency conditions for the idempotents \((\Proj_x)_{x\in\Buil^\circ}\) for vertices imply consistency conditions for the idempotents \((\Proj_\sigma)_{\sigma\in\Buil}\) for all polysimplices.  Most of the argument deals with the geometry of the building: we need chains of adjacent vertices or polysimplices in hulls of polysimplices.

Condition~\ref{consistent_commute} in Definition~\ref{def:consistent_idempotents} implies that the order in the product
\[
\Proj_\sigma \defeq \prod_{\twolinesubscript{x\in\Buil^\circ}{x\prec\sigma}} \Proj_x
\]
defining~\(\Proj_\sigma\) does not matter.  Hence~\(\Proj_\sigma\) is a well-defined idempotent endomorphism of~\(V\).  The same argument yields \(\Proj_\sigma\Proj_\tau=\Proj_{[\sigma,\tau]}\) for adjacent polysimplices \(\tau\) and~\(\sigma\).  Condition~\ref{consistent_simplex_equivariant} follows immediately from~\ref{consistent_equivariant}.  We will spend the remainder of this section to check that \ref{consistent_commute} and~\ref{consistent_convex} imply~\ref{consistent_simplex_convex}.  We begin with two geometric lemmas.

\begin{lemma}
  \label{lem:simplex_path}
  Let \(\tau,\sigma,\omega\) be polysimplices in the building with \(\omega\in\Hull(\sigma,\tau)\).  There is a finite sequence of polysimplices \(\tau_0=\tau\), \(\tau_1\), \dots, \(\tau_{m-1}\), \(\tau_m=\omega\) such that \(\tau_i\in\Hull(\omega,\tau_{i-1})\), \(\omega\in\Hull(\sigma,\tau_i)\), and either \(\tau_{i-1}\prec\tau_i\) or \(\tau_{i-1}\succ\tau_i\) for \(i=1,\dotsc,m\) \textup{(}see Figure~\textup{\ref{fig:simplex_path})}.
\end{lemma}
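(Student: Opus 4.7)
The plan is to realise the desired sequence as the cells traversed by a generic straight-line geodesic in an apartment and to verify both hull conditions by a single one-wall-at-a-time argument. Since $\omega\in\Hull(\sigma,\tau)$, every apartment $\Apart$ containing $\sigma\cup\tau$ also contains $\omega$; fix such an $\Apart$ and choose interior points $p_\tau$ of $\realise\tau$ and $p_\omega$ of $\realise\omega$ generically, so that the straight segment $\gamma\colon[0,1]\to\realise\Apart$ from $p_\tau$ to $p_\omega$ meets each wall of $\Apart$ transversally and in only finitely many points.

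On each open subinterval of $[0,1]$ between these transitions, $\gamma$ lies in the interior of a single polysimplex $\rho_j$; at each transition, $\gamma$ passes through the interior of a common proper face $f_j$ of $\rho_j$ and $\rho_{j+1}$. Reading the polysimplices encountered in order (and prepending $\tau$ and appending $\omega$ if these differ from $\rho_1$ and $\rho_N$) yields a sequence $\tau=\tau_0,\tau_1,\ldots,\tau_m=\omega$ in which consecutive entries stand in face--coface relation; the case $\tau=\omega$ is trivial with $m=0$.

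For the hull conditions, I will use that $\realise{\Hull(\alpha,\beta)}$ equals the usual convex hull of $\realise\alpha\cup\realise\beta$ in $\realise\Apart$, hence the intersection of all closed half-spaces $H$ bounded by walls of $\Apart$ with $\realise\alpha\cup\realise\beta\subseteq H$. Because walls never cut through the interior of a polysimplex, any polysimplex $\rho$ is either contained in $H$ or has $\operatorname{int}\realise\rho$ in the open complement of $H$; so $\rho\in\Hull(\alpha,\beta)$ iff $\realise\rho\subseteq H$ for every such $H$. Both hull conditions then reduce to the elementary fact that a straight line meets any affine hyperplane in at most one point. For $\tau_i\in\Hull(\omega,\tau_{i-1})$: were some $H$ bounded by a wall $W$ to contain $\realise{\tau_{i-1}}\cup\realise\omega$ but not $\realise{\tau_i}$, then $\gamma$ would pass from $\operatorname{int}\realise{\tau_{i-1}}\subseteq H$ into $\operatorname{int}\realise{\tau_i}$ in the open complement of $H$ and then reach $p_\omega\in H$, crossing $W$ at least twice. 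For $\omega\in\Hull(\sigma,\tau_i)$: were $H$ to contain $\realise\sigma\cup\realise{\tau_i}$ but not $\realise\omega$, then $\gamma$ would cross $W$ exactly once, necessarily after $\tau_i$; hence $\gamma$ does not cross $W$ before $\tau_i$, so $p_\tau\in H$ and $\realise\tau\subseteq H$, whence $\realise\omega\subseteq H$ since $\omega\in\Hull(\sigma,\tau)$ and $\realise\sigma\cup\realise\tau\subseteq H$---contradicting the hypothesis.

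The only delicate point is the genericity of $p_\tau$ and $p_\omega$, needed to avoid $\gamma$ being accidentally contained in some wall (which can happen only if both $\realise\tau$ and $\realise\omega$ sit inside a common wall, in which case that wall's half-spaces automatically contain everything relevant and no contradiction is sought); bad configurations form a finite union of proper affine subspaces, so generic choices exist and present no real obstacle.
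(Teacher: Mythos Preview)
Your proof is correct and follows essentially the same approach as the paper's: construct the sequence from the polysimplices traversed by a straight segment from an interior point of~\(\tau\) to one of~\(\omega\) in a common apartment, and verify both hull conditions via the affine-root/half-space separation argument using that an affine function on a segment cannot change sign twice. The paper phrases the first hull condition more directly (an interior point of~\(\tau_i\) lies on a geodesic between interior points of~\(\tau_{i-1}\) and~\(\omega\), hence \(\tau_i\in\Hull(\omega,\tau_{i-1})\) by convexity), and does not invoke genericity, but these are cosmetic differences.
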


  \begin{figure}[htbp]
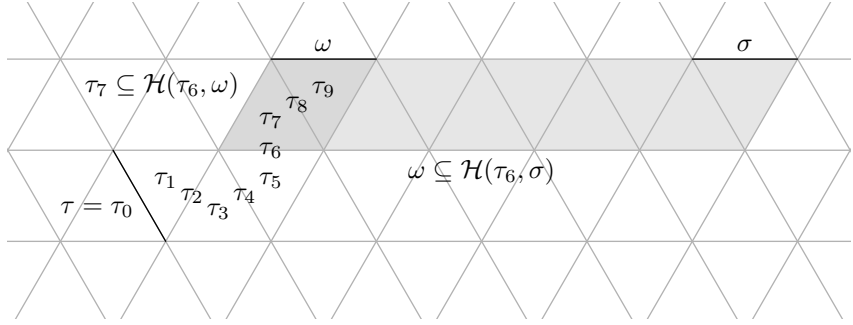

    \centering
\begin{asy}
unitsize(.7cm);
pair z[];//path of points in the building
//vertices of τ
z[0]=2*a[1]+a[0];
z[10]=3*a[1];

//vertices of σ
z[1]=(4*a[1]+6*a[0]);
z[2]=4*a[1]+5*a[0];

//vertices of ω
z[3]=(4*a[1]+a[0]);
z[4]=4*a[1]+2*a[0];

//other corners of hulls of σ,τ
z[7]=4*a[1];
z[8]=2*a[1]+6*a[0];

z[11]=(z[0]+z[10]+3*a[1]+a[0])/3;//τ₁
z[12]=2.5*a[1]+a[0];//τ₂
z[13]=(7*a[1]+4*a[0])/3;//τ₃
z[14]=3*a[1]-.5*a[2]+a[0];//τ₄
z[15]=(8*a[1]+5*a[0])/3;//τ₅
z[16]=3*a[1]+1.5*a[0];//τ₆
z[17]=(10*a[1]+4*a[0])/3;//τ₇
z[18]=3.5*a[1]+1.5*a[0];//τ₈
z[19]=(11*a[1]+5*a[0])/3;//τ₉

//draw the convex hull of τ₆ and ω
fill((3*a[1]+a[0])--(z[1]-a[1])--z[1]--z[3]--cycle,white*.9);
fill((3*a[1]+a[0])--(3*a[1]+2*a[0])--z[4]--z[3]--cycle,white*.85);

//draw the walls of the building:
for(i=-18;i<30;++i){
  draw((i*a[0])--(i*a[0]+20*a[1]),white*.7);
  draw((i*a[0])--(i*a[0]+20*a[2]),white*.7);
}
for(i=0;i<20;++i){
  draw((i*a[2]-10*a[0])--(i*a[2]+(20+i)*a[0]),white*.7);
}

//clip to rectangular region:
clip((1,2)--(17,2)--(17,8)--(1,8)--cycle);

//draw and label σ, ω and τ
draw("\(\tau=\tau_0\)", z[0]--z[10],SW);
draw("\(\sigma\)",z[1]--z[2],N);
draw("\(\omega\)",z[3]--z[4],N);

//draw simplices along the way
label("\(\tau_1\)",z[11]);
label("\(\tau_2\)",z[12]);
label("\(\tau_3\)",z[13]);
label("\(\tau_4\)",z[14]);
label("\(\tau_5\)",z[15]);
label("\(\tau_6\)",z[16]);
label("\(\tau_7\)",z[17]);
label("\(\tau_8\)",z[18]);
label("\(\tau_9\)",z[19]);

//label region
label("\(\tau_7\subseteq\mathcal{H}(\tau_6,\omega)\)",.5*(z[3]+3*a[1]+a[0]),NW);
label("\(\omega\subseteq\mathcal{H}(\tau_6,\sigma)\)",.5*(2*a[1]+a[0]+z[1]),S);
\end{asy}
    \caption{Illustration of Lemma~\ref{lem:simplex_path}}
    \label{fig:simplex_path}
  \end{figure}

\begin{proof}
  Let \(\varphi\colon [0,1]\to\Buil\) be a geodesic between interior points of \(\tau\) and~\(\omega\).  Each \(\varphi(t)\) is an interior point of some polysimplex~\(\tau(t)\).  The function \(t\mapsto\tau(t)\) is piecewise constant.  Let \(0=t_0<t_2<t_4<\dotsb<t_{2n-2}<t_{2n}=1\) be the points where \(\tau(t)\) jumps and choose \(t_1\), \dots, \(t_{2n-1}\) with \(t_0<t_1<t_2<\dotsb<t_{2n-1}<t_{2n}\).  Let \(\tau_i = \tau(t_i)\), so that \(\tau_0=\tau\) and \(\tau_{2n}=\omega\).  Then \(\tau_{2j}\) and~\(\tau_{2j+2}\) must be faces of \(\tau_{2j+1}\) for \(j=0,\dotsc,n-1\), so that either \(\tau_i\prec\tau_{i+1}\) or \(\tau_i\succ\tau_{i+1}\) for \(i=0,\dotsc,2n\).  Since some interior point of~\(\tau_i\) lies on a geodesic between interior points of \(\tau_{i-1}\) and~\(\omega\), we have \(\tau_i\in\Hull(\omega,\tau_{i-1})\).

  It remains to check \(\omega\in\Hull(\sigma,\tau_i)\).  Let~\(\Apart\) be an apartment containing \(\tau\) and~\(\sigma\).  Then~\(\Apart\) also contains~\(\omega\) because \(\omega\in\Hull(\sigma,\tau)\).  Hence~\(\Apart\) contains all polysimplices~\(\tau_i\).  If not \(\omega\in\Hull(\sigma,\tau_i)\), then there is an affine root~\(\affRoot\) on~\(\Apart\) with \(\affRoot|_{\tau_i}\ge0\) and \(\affRoot|_{\sigma}\ge0\), but \(\affRoot|_\omega < 0\).  Since \(\affRoot\circ\varphi(t)= \lambda t + \mu \) for some \(\lambda ,\mu \in \R\) and \(\affRoot\circ\varphi\) changes sign between \(t_i\) and~\(1\), it cannot change sign between \(0\) and~\(t_i\), so that \(\affRoot\circ\varphi(0)\ge0\) as well, that is, \(\affRoot|_\tau\ge0\).  But then~\(\affRoot\) separates~\(\omega\) from \(\tau\cup\sigma\), contradicting \(\omega\in\Hull(\sigma,\tau)\).  Hence \(\omega\in\Hull(\sigma,\tau_i)\).
\end{proof}

\begin{lemma}
  \label{lem:point_path}
  Let \(\sigma\) and~\(\tau\) be polysimplices in~\(\Buil\) and let~\(y\) be a vertex adjacent to~\(\sigma\) with \(y\in\Hull(\sigma,\tau)\).  Then there is a finite sequence of vertices \(z_0\), \dots, \(z_m\) with \(z_m=y\) and \(z_0\prec\tau\) such that~\(z_i\) is adjacent to~\(z_{i-1},\, z_i\in\Hull(y,z_{i-1})\) and \(y\in\Hull(\sigma,z_i)\) for \(i=1,\dotsc,m\) \textup{(}see Figure~\textup{\ref{fig:point_path})}.  In particular, there is a vertex~\(z\) of~\(\tau\) with \(y\in\Hull(\sigma,z)\).
\end{lemma}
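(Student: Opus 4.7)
The plan is to reduce the statement to Lemma~\ref{lem:simplex_path} and convert the resulting polysimplex chain into a vertex chain. Throughout, I work in an apartment $\Apart$ containing $\sigma$ and $\tau$; the hypothesis $y \in \Hull(\sigma,\tau)$ forces $y \in \Apart$ as well, so $\Apart$ may be treated as a Euclidean affine space with its polysimplicial structure and affine root system, and hulls become intersections of affine-root half-spaces inside $\Apart$.

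First I apply Lemma~\ref{lem:simplex_path} with $y$ (viewed as a $0$\nb-polysimplex) playing the role of $\omega$, producing a chain $\tau = \tau_0, \tau_1, \dotsc, \tau_m = y$ with consecutive polysimplices face-related, $\tau_i \in \Hull(y,\tau_{i-1})$, and $y \in \Hull(\sigma,\tau_i)$. Since for each $i$ either $\tau_{i-1} \prec \tau_i$ or $\tau_{i-1} \succ \tau_i$, every vertex of the smaller polysimplex is also a vertex of the larger one, so any vertex of $\tau_{i-1}$ and any vertex of $\tau_i$ lie jointly in the larger polysimplex and are thus adjacent. This already produces a chain of adjacent vertices running from some $z_0 \prec \tau$ to $z_m = y$, with at most one extra vertex inserted at each step to bridge between vertex sets.

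The main obstacle is promoting the polysimplex-level inclusions to the vertex-level conditions $z_i \in \Hull(y,z_{i-1})$ and $y \in \Hull(\sigma,z_i)$. A single vertex carries less spread than the polysimplex it belongs to, so these stronger inclusions do not follow formally. I plan to achieve them by choosing the $z_i$ consistently along the geodesic in $\Apart$ from an interior point of $\tau$ to $y$: at each polysimplex $\tau_i$ traversed by this geodesic, select $z_i$ to be a vertex of $\tau_i$ lying on the $y$\nb-side of the geodesic's trace through $\tau_i$. The hull conditions are then verified affine-root by affine-root in the spirit of Lemma~\ref{lem:simplex_path}: any affine root $\varaffRoot$ violating one of the desired inclusions would, by the near-collinearity of the vertex chain along a single geodesic in $\Apart$, force a sign change of $\varaffRoot$ that is incompatible with the polysimplex-level assumptions $\tau_i \in \Hull(y,\tau_{i-1})$ and $y \in \Hull(\sigma,\tau_i)$ already provided by Lemma~\ref{lem:simplex_path}.

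Finally, the ``in particular'' assertion follows either from the chain itself---by examining $z_1$, which is adjacent to $z_0 \prec \tau$ and satisfies $y \in \Hull(\sigma,z_1)$, and using a face-tracking argument to replace $z_1$ by a vertex of $\tau$ if necessary---or more directly by a convex-geometry argument in $\Apart$: write $y$ as a convex combination of a point of $\realise{\sigma}$ and a point of $\realise{\tau}$, let $\omega$ be the minimal face of $\tau$ whose relative interior contains the $\tau$\nb-component of this combination, and then shrink $\omega$ further using that $y$ is a vertex adjacent to $\sigma$ to obtain a single vertex $z$ of $\tau$ with $y \in \Hull(\sigma,z)$.
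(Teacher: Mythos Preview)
Your plan has a genuine gap at the step ``promoting the polysimplex-level inclusions to the vertex-level conditions.''  You are right that this is the main obstacle, but the sketch you give for overcoming it---choosing vertices ``on the \(y\)\nb-side of the geodesic'' and arguing by ``near-collinearity''---is not an argument, and in fact this step is exactly where the hypothesis that \(y\) is \emph{adjacent to~\(\sigma\)} must be used in a precise way.  Remark~\ref{rem:why_two_geometric_lemmas} exhibits, in type~\(\tilde{A}_3\), a vertex \(y\in\Hull(\sigma,\tau)\) (with \(\sigma=x\) a vertex but \(y\) not adjacent to it) such that no vertex \(z\prec\tau\) satisfies \(y\in\Hull(\sigma,z)\).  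So passing from \(y\in\Hull(\sigma,\tau_i)\) to \(y\in\Hull(\sigma,z_i)\) for some vertex \(z_i\prec\tau_i\) can genuinely fail, and nothing in your geodesic/collinearity heuristic distinguishes the adjacent case from the non-adjacent one.  Likewise, the ``direct convex-geometry argument'' you propose for the final assertion---writing \(y\) as a convex combination and shrinking to a vertex---does not explain how adjacency enters, and the same counterexample shows it cannot succeed as stated.

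The paper's proof takes a different route that makes the role of adjacency explicit.  It first observes that, because \(y\) is adjacent to~\(\sigma\), any affine root~\(\affRoot\) with \(\affRoot|_\sigma\ge0\) and \(\affRoot(y)>0\) must in fact vanish on~\(\sigma\); this turns the condition \(y\in\Hull(\sigma,z)\) into a finite list of strict inequalities \(\affRoot(z)>0\) indexed by the simple roots of a chamber containing \([\sigma,y]\) that vanish on~\(\sigma\) and are positive at~\(y\).  A reflection argument then arranges these simple roots to be nonnegative on~\(\tau\), which produces the desired vertex \(z_0\prec\tau\).  Crucially, the paper then shows that once \(z_0\) is found, \emph{any} \(z\in\Hull(y,z_0)\) automatically satisfies \(y\in\Hull(\sigma,z)\), so the second condition on the chain follows from the first and need not be verified step by step.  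Your reduction to Lemma~\ref{lem:simplex_path} does not give access to this mechanism.
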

  \begin{figure}[htbp]
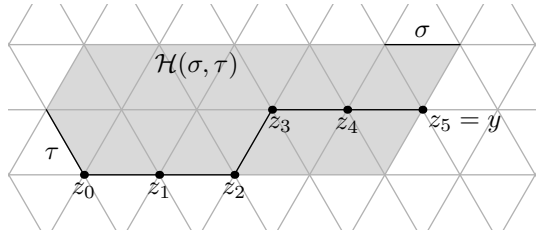

    \centering
\begin{asy}
unitsize(.5cm);
pair z[];//path of points in the building
z[0]=2*a[1];//z₀∈τ
z[1]=(2*a[1]+1*a[0]);
z[2]=(2*a[1]+2*a[0]);
z[3]=(3*a[1]+2*a[0]);
z[4]=(3*a[1]+3*a[0]);
z[5]=(3*a[1]+4*a[0]);//z₅=y

//vertices of σ
z[11]=(4*a[1]+4*a[0]);
z[12]=4*a[1]+3*a[0];

//other corners of hulls of σ,τ
z[7]=4*a[1]-a[0];
z[8]=(2*a[1]+4*a[0]);
z[10]=3*a[1]-a[0];//        other vertex of τ besides z₀

//draw the convex hulls
fill(z[0]--z[8]--z[11]--z[7]--z[10]--cycle,white*.85);
//draw the walls of the building:
for(i=-18;i<30;++i){
  draw((i*a[0])--(i*a[0]+20*a[1]),white*.7);
  draw((i*a[0])--(i*a[0]+20*a[2]),white*.7);
}
for(i=0;i<20;++i){
  draw((i*a[2]-10*a[0])--(i*a[2]+(20+i)*a[0]),white*.7);
}

//clip to rectangular region:
clip((0,2)--(14,2)--(14,8)--(0,8)--cycle);

//draw and label σ and τ
draw("\(\tau\)", z[0]--z[10],SW);
draw("\(\sigma\)",z[11]--z[12],N);

//connect and label points
draw(z[0]--z[1]--z[2]--z[3]--z[4]--z[5]);
dot("\(z_0\)",z[0],S);
dot("\(z_1\)",z[1],S);
dot("\(z_2\)",z[2],S);
dot("\(z_3\)",z[3],SSE);
dot("\(z_4\)",z[4],S);
dot("\(z_5=y\)",z[5],SE);

//label region
label("\(\mathcal{H}(\sigma,\tau)\)",.7*z[7]+.3*z[11],S);
\end{asy}
    \caption{Illustration of Lemma~\ref{lem:point_path}}
    \label{fig:point_path}
  \end{figure}

\begin{proof}
  Let~\(\Apart\) be an apartment containing \(\sigma\) and~\(\tau\).  Since \(y\in\Hull(\sigma,\tau)\) implies \(y\in \Apart\), we may restrict our attention to~\(A\).  If the affine root system underlying~\(\Apart\) is decomposable, then \(\Apart=\prod_{i=1}^n \Apart_i\) with apartments of indecomposable affine root systems~\(\Apart_i\).  Each affine root factors through the projection to~\(\Apart_i\) for some~\(i\).  Hence vertices in~\(\Apart\) are nothing but families of vertices in~\(\Apart_i\) for all~\(i\), and two vertices are adjacent if and only if their \(\Apart_i\)\nb-components are adjacent for all~\(i\); polysimplices in~\(\Apart\) are the same as products of simplices in~\(\Apart_i\).  Moreover, \(\Hull(\sigma,\tau) = \prod_{i=1}^n \Hull(\sigma_i,\tau_i)\) if \(\sigma=\prod_{i=1}^n \sigma_i\) and \(\tau=\prod_{i=1}^n \tau_i\).  Therefore, if we can solve the problem for \(\sigma_i\), \(\tau_i\) and~\(y_i\) in~\(\Apart_i\) for each~\(i\), we can solve it for \(\sigma\), \(\tau\) and~\(y\) in~\(\Apart\).  We may assume without loss of generality that the affine root system of~\(\Apart\) is indecomposable.  Then~\(\Apart\) is a simplicial complex.

  An affine root~\(\affRoot\) of the apartment~\(\Apart\) defines a closed half space
  \[
  \affRoot^\le \defeq \{v\in \Apart\mid \affRoot(v)\le0\}
  \]
  We define \(\affRoot^\ge\) and~\(\affRoot^>\) by the same recipe.  The hull of~\(\sigma\) and~\(\tau\) is the intersection of all~\(\affRoot^\le\) with \(\sigma\cup\tau\subseteq\affRoot^\le\).  Let~\(z\) be a vertex.  If \(y \notin \Hull(\sigma,z)\), then there is an affine root~\(\affRoot\) with \(\sigma,z\subseteq\affRoot^\le\) and \(y\in\affRoot^>\).  Since \(y\) is adjacent to~\(\sigma\), \(y\in\affRoot^>\) implies \(\sigma\subseteq\affRoot^\ge\), so that \(\affRoot|_\sigma=0\).  Hence a vertex~\(z\) satisfies \(y\in\Hull(\sigma,z)\) if and only if \(\affRoot(z)>0\) for all affine roots~\(\affRoot\) with \(\affRoot|_\sigma=0\) and \(\affRoot(y)>0\).  Since \(y\in\Hull(\sigma,\tau)\), the same reasoning shows that for each affine root~\(\affRoot\) with \(\affRoot|_\sigma=0\) and \(\affRoot(y)>0\), there is a vertex~\(z_\affRoot\) of~\(\tau\) with \(\affRoot(z_\affRoot)>0\).  Our first task is to find \(z_0\prec\tau\) with \(y\in\Hull(\sigma,z_0)\).  This is trivial if \(y\prec\sigma\), so that we may assume that~\(y\) does not belong to~\(\sigma\).

  Let \(d=\dim \Apart\).  Since~\(\Apart\) is simplicial, any chamber is bounded by exactly \(d+1\) walls.  Let \(\affRoot_0\), \dots, \(\affRoot_d\) be affine roots such that \(\gamma\defeq \bigcap_{j=0}^d \affRoot_j^\ge\) is a chamber that contains \([\sigma,y]\).  Order them so that \(\affRoot_0(y)>0\) and \(\affRoot_j(y)=0\) for \(j\neq0\), and \(\affRoot_j|_\sigma=0\) if and only if \(j\le k\), where~\(k\) is the codimension of~\(\sigma\).  We now modify this chamber until \(\affRoot_j|_\tau\ge0\) for \(j=1,\dotsc,k\).  If there are~\(j\) in this range and \(z\in\tau\) with \(\affRoot_j(z)<0\), then we replace~\(\gamma\) by \(s_{\affRoot_j}(\gamma)\), where~\(s_{\affRoot_j}\) denotes the reflection at the wall \(\ker\affRoot_j\).  This yields another chamber containing \([\sigma,y]\) because \(\affRoot_j\) vanishes on \([\sigma,y]\).  Each reflection reduces the number of~\(j\) between \(1\) and~\(k\) with \(\affRoot_j|_\tau\not\ge0\) at least by~\(1\).  Finitely many such steps achieve \(\affRoot_j|_\tau\ge0\) for \(j=1,\dotsc,k\).  Since \(\affRoot_0|_\sigma=0\), \(\affRoot_0(y)>0\), and \(y\in\Hull(\sigma,\tau)\), there is a vertex \(z_0\prec\tau\) with \(\affRoot_0(z_0)>0\).  We claim that \(y\in\Hull(\sigma,z_0)\).

  Since the roots \(\affRoot_0\), \dots, \(\affRoot_d\) bound a chamber, any affine root~\(\varaffRoot\) is of the form \(\varaffRoot=\sum_{j=0}^d \lambda_j\affRoot_j\) with either \(\lambda_j\ge0\) for all~\(j\) or \(\lambda_j\le0\) for all~\(j\).  Since \(\affRoot_j|_\sigma\ge0\) for all~\(j\), we have \(\varaffRoot|_\sigma=0\) if and only if \(\lambda_j=0\) for \(j>k\).  Furthermore, \(\varaffRoot(y)>0\) if and only if \(\lambda_0>0\), forcing \(\lambda_j\ge0\) for all~\(j\).  Since \(\affRoot_j|_\tau\ge0\) for \(1\le j\le k\) by construction, we get \(\varaffRoot(z)\ge \lambda_0\affRoot_0(z)>0\).  Therefore, \(y\in\Hull(\sigma,z_0)\).

  Furthermore, if \(z\in\Hull(y,z_0)\), then \(\affRoot_0(z)>0\) because \(\affRoot_0(y)>0\) and \(\affRoot_0(z_0)>0\), and \(\affRoot_j(z)\ge0\) for \(j=1,\dotsc,k\) because \(\affRoot_j(y)\ge0\) and \(\affRoot_j(z_0)\ge0\).  Since \(z_i\in\Hull(y,z_{i-1})\) implies \(z_i\in\Hull(y,z_0)\), we conclude that the property \(y\in\Hull(\sigma,z_i)\) follows from the others.  This remains so in the case \(y\prec\sigma\) excluded above.

  Thus it remains to find a finite sequence of vertices \(z_1\), \dots, \(z_m=y\) such that \(z_{i-1}\) and~\(z_i\) are adjacent and \(z_i\in\Hull(y,z_{i-1})\) for \(i=1,\dotsc,m\).  To construct~\(z_i\) given \(z_{i-1}\neq y\), we consider the geodesic~\(\varphi\) between~\(z_{i-1}\) and~\(y\).  Let~\(\omega\) be the simplex such that \(\varphi(t)\) is an interior point of~\(\omega\) for \(t\in(0,\varepsilon)\) for some \(\varepsilon>0\).  Then \(\omega\in \Hull(y,z_{i-1})\), so that we may let~\(z_i\) be another vertex of~\(\omega\).  Since the passage from~\(z_{i-1}\) to~\(z_i\) decreases the (finite) number of walls that separate~\(z_i\) from~\(y\), this construction will lead to \(z_m=y\) after finitely many steps.
\end{proof}

\begin{remark}
  \label{rem:why_two_geometric_lemmas}
  The adjacency assumption in Lemma~\ref{lem:point_path} is necessary.  In buildings, say, of type~\(\tilde{A}_3\), it can happen that there is no vertex \(z_0\prec\tau\) with \(y\in\Hull(\sigma,z_0)\) although \(y\in\Hull(\sigma,\tau)\).  This is why we only get a sequence of polysimplices in Lemma~\ref{lem:simplex_path}.  This phenomenon cannot occur in \(2\)\nb-dimensional buildings.

  The counterexample involves simplices in a single apartment of type~\(\tilde{A}_3\).  Let \(V\defeq \R^4/\R\cdot(1,1,1,1)\).  The roots on this apartment are the affine maps
  \[
  \affRoot_{ijk}(x_1,x_2,x_3,x_4) \defeq x_i - x_j - k\qquad
  \text{for \(1\le i\neq j\le 4\), \(k\in\Z\)}
  \]
  on~\(V\).  Let
  \[
  x\defeq (0,0,0,0),\qquad
  y\defeq (4,2,2,0),\qquad
  z_1\defeq (4,2,1,0),\qquad
  z_2\defeq (4,3,2,0).
  \]
  The points \(z_1\) and~\(z_2\) are adjacent, that is, there is no affine root~\(\affRoot_{ijk}\) with \(\affRoot_{ijk}(z_1)<0<\affRoot_{ijk}(z_2)\).  The point~\(y\) belongs to the hull \(\Hull(x,[z_1,z_2])\), but neither to \(\Hull(x,z_1)\) nor to \(\Hull(x,z_2)\).  To check this, we compute the hulls.  Let
  \[
  V_+ \defeq \{(x_1,x_2,x_3,x_4)\in V\mid x_0\ge x_1\ge x_2\ge x_3\}.
  \]
  Since \(x,y,z_1,z_2\in V_+\), all hulls are contained in~\(V_+\).
  \begin{align*}
    \Hull(x,z_1) &= \{(x_1,x_2,x_3,x_4)\in V_+\mid
    x_0\le x_1+2\le x_2+3\le x_3+4\},\\
    \Hull(x,z_2) &= \{(x_1,x_2,x_3,x_4)\in V_+\mid
    x_0\le x_1+1\le x_2+2\le x_3+4\},\\
    \Hull(x,[z_1,z_2]) &=
    \begin{multlined}[t]
      \{(x_1,x_2,x_3,x_4)\in V_+\mid
      x_0\le x_1+2\le x_2+3\le x_3+5\\
      \text{and}\ x_0\le x_3+4\}.
    \end{multlined}
  \end{align*}
\end{remark}

After these geometric preparations, we can now reduce~\ref{consistent_simplex_convex} to \ref{consistent_convex} and~\ref{consistent_simplex_commute} in four steps.  First, the second statement in Lemma~\ref{lem:point_path} implies \(\Proj_x\Proj_y\Proj_\tau=\Proj_x\Proj_\tau\) if \(x\) and~\(y\) are adjacent vertices with \(y\in\Hull(x,\tau)\): let~\(z\) be a vertex of~\(\tau\) with \(y\in\Hull(x,z)\); then~\ref{consistent_simplex_commute} yields \(\Proj_\tau=\Proj_z\Proj_\tau\) and~\ref{consistent_convex} yields
\[
\Proj_x\Proj_\tau = \Proj_x\Proj_z\Proj_\tau = \Proj_x\Proj_y\Proj_z\Proj_\tau = \Proj_x\Proj_y\Proj_\tau.
\]

Secondly, we claim that \(\Proj_\tau\Proj_\sigma=\Proj_\tau\Proj_y\Proj_\sigma\) if \(y\in\Hull(\sigma,\tau)\) and~\(y\) is adjacent to~\(\sigma\).  Here we use the sequence of adjacent points \((z_i)\) from Lemma~\ref{lem:point_path}.  The first step yields \(\Proj_{z_{i-1}}\Proj_{z_i}\Proj_\sigma = \Proj_{z_{i-1}} \Proj_\sigma\) and \(\Proj_{z_{i-1}}\Proj_{z_i}\Proj_y  = \Proj_{z_{i-1}} \Proj_y \) because \(z_i\) and~\(z_{i-1}\) are adjacent vertices with \(z_i\in\Hull(y,z_{i-1})\) and \(z_i\in\Hull(\sigma,z_{i-1})\); here we use that \(\Hull(\sigma,z_{i-1})\) contains \(\Hull(y,z_{i-1})\) because \(y\in\Hull(\sigma,z_{i-1})\).  Hence the first step yields
\begin{align*}
  \Proj_\tau\Proj_\sigma
  &= \Proj_\tau\Proj_{z_0}\Proj_\sigma
  = \Proj_\tau\Proj_{z_0}\Proj_{z_1}\Proj_\sigma
  = \dotsb
  = \Proj_\tau\Proj_{z_0}\Proj_{z_1}\dotsm\Proj_{z_{m-1}}\Proj_y\Proj_\sigma
  \\&= \Proj_\tau\Proj_{z_0}\Proj_{z_1}\dotsm\Proj_{z_{m-2}}\Proj_y\Proj_\sigma
  = \dotsb
  = \Proj_\tau\Proj_y\Proj_\sigma.
\end{align*}

Thirdly, we claim that \(\Proj_\sigma\Proj_\tau=\Proj_\sigma\Proj_\omega\Proj_\tau\) if \(\omega\in\Hull(\sigma,\tau)\) and~\(\omega\) is adjacent to~\(\tau\).  Each vertex~\(y\) of~\(\omega\) is adjacent to~\(\tau\), so that \(\Proj_y\) commutes with~\(\Proj_\tau\) by~\ref{consistent_simplex_commute}.  Hence the second step yields
\[
\Proj_\sigma\Proj_\tau = \Proj_\sigma\Proj_y\Proj_\tau = \Proj_\sigma\Proj_{[y,\tau]} = \Proj_\sigma\Proj_\tau\Proj_y
\]
for each vertex~\(y\) of~\(\omega\).  Repeating this argument, we get
\[
\Proj_\sigma\Proj_\tau
= \Proj_\sigma\Proj_\tau\cdot \prod_{y\prec\omega} \Proj_y
= \Proj_\sigma\Proj_\tau\Proj_\omega
= \Proj_\sigma\Proj_{[\omega,\tau]}
= \Proj_\sigma\Proj_\omega\Proj_\tau.
\]

Finally, we use Lemma~\ref{lem:simplex_path} to reduce the general case of~\ref{consistent_simplex_convex} to the third step.  Let \(\omega\in\Hull(\tau,\sigma)\) be arbitrary and choose a sequence of polysimplices \(\tau_0\), \dots, \(\tau_m\) as in Lemma~\ref{lem:simplex_path}.  Then the third step yields
\[
  \Proj_\sigma\Proj_\tau
  = \Proj_\sigma\Proj_{\tau_1}\Proj_\tau
  =\dotsb= \Proj_\sigma\Proj_\omega\Proj_{\tau_{m-1}}\dotsm\Proj_{\tau_1}\Proj_\tau
  =\Proj_\sigma\Proj_\omega\Proj_{\tau_{m-2}}\dotsm\Proj_{\tau_1}\Proj_\tau
  =\dotsb=\Proj_\sigma\Proj_\omega\Proj_\tau.
\]

This finishes the proof of Proposition~\ref{pro:consistent}.

\subsection{Proof of exactness}
\label{sec:proof_resolution}

In this section, we prove Theorem~\ref{the:main_theorem_resolution}.  In fact, the theorem remains valid for the admissible subcomplexes introduced in Definition~\ref{def:admissible}.  We will prove it in that generality.

We first assume that~\(\Sigma\) is finite.  Later, we will reduce infinite~\(\Sigma\) to this special case.  Theorem~\ref{the:support_projection} yields the last assertion,
\[
V \cong \sum_{x\in\Sigma^\circ} \Proj_x(V) \oplus \bigcap_{x\in\Sigma^\circ} \ker \Proj_x.
\]
We still have to prove
\begin{equation}
  \label{eq:homology_finite}
  \Ho_n(\Sigma,\Gamma) \cong
  \begin{cases}
    \sum_{x\in\Sigma^\circ} V_x&\text{for \(n=0\),}\\
    0&\text{for \(n>0\).}
  \end{cases}
\end{equation}
The remaining assertion about cohomology follows by the same argument applied to the opposite category, see Lemma~\ref{lem:opposite_category}.

We prove~\eqref{eq:homology_finite} for all admissible finite subcomplexes by a divide and conquer method.

\begin{lemma}
  \label{lem:divide_and_conquer}
  Let~\(\Sigma\) be a finite admissible subcomplex and assume that it can be decomposed as \(\Sigma = \Sigma_+\cup \Sigma_-\) with admissible \(\Sigma_\pm\) and \(\Sigma_0 = \Sigma_+\cap \Sigma_-\).  If~\eqref{eq:homology_finite} holds for \(\Sigma_+\), \(\Sigma_-\), and~\(\Sigma_0\), then it holds for~\(\Sigma\) as well.
\end{lemma}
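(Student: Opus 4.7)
My plan is to use a Mayer--Vietoris argument for the decomposition $\Sigma = \Sigma_+ \cup \Sigma_-$. Since the cellular chain complex $\Cell(-,\Gamma)$ is a coproduct indexed by polysimplices, and $\Sigma_+ \cap \Sigma_- = \Sigma_0$ while $\Sigma_+ \cup \Sigma_- = \Sigma$, sorting each summand $V_\sigma$ of $\Cell(\Sigma,\Gamma)$ according to which of the four subcomplexes contains~$\sigma$ produces a short exact sequence of chain complexes in~$\Abcat$,
\begin{equation*}
0 \to \Cell(\Sigma_0,\Gamma) \xrightarrow{(\iota_+,-\iota_-)} \Cell(\Sigma_+,\Gamma) \oplus \Cell(\Sigma_-,\Gamma) \xrightarrow{\jmath_++\jmath_-} \Cell(\Sigma,\Gamma) \to 0,
\end{equation*}
where $\iota_\pm$ and $\jmath_\pm$ are the evident subcomplex inclusions. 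Compatibility with the differentials is automatic, since $\partial$ is defined polysimplex by polysimplex. The passage to the long exact sequence in homology, which uses the exactness of countable coproducts in~$\Abcat$ assumed in Theorem~\ref{the:main_theorem_resolution}, will then drive the whole argument.

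For $n \ge 2$, all three groups $\Ho_n(\Sigma_0,\Gamma)$, $\Ho_n(\Sigma_+,\Gamma)$, $\Ho_n(\Sigma_-,\Gamma)$ vanish by the hypothesis on the smaller subcomplexes, so $\Ho_n(\Sigma,\Gamma) = 0$ at once. For $n = 1$, the same hypothesis reduces the claim to the injectivity of the connecting map
\begin{equation*}
\Ho_0(\Sigma_0,\Gamma) \to \Ho_0(\Sigma_+,\Gamma) \oplus \Ho_0(\Sigma_-,\Gamma).
\end{equation*}
Under the identifications $\Ho_0(\Sigma_\bullet,\Gamma) \cong \sum_{x \in \Sigma_\bullet^\circ} \Proj_x(V)$ provided by the hypothesis, this map becomes $v \mapsto (v,-v)$, which is manifestly a monomorphism; hence $\Ho_1(\Sigma,\Gamma) = 0$.

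The interesting case is $n = 0$, where the tail of the long exact sequence identifies $\Ho_0(\Sigma,\Gamma)$ with the cokernel of the map $(\mathrm{id},-\mathrm{id})$ from $\sum_{x\in\Sigma_0^\circ}\Proj_x(V)$ to $\sum_{x\in\Sigma_+^\circ}\Proj_x(V)\oplus \sum_{x\in\Sigma_-^\circ}\Proj_x(V)$. To identify this cokernel with $\sum_{x\in\Sigma^\circ}\Proj_x(V)$, I would post-compose with the addition map $(a,b) \mapsto a + b$ into~$V$: its image is $\sum_{x\in\Sigma_+^\circ}\Proj_x(V) + \sum_{x\in\Sigma_-^\circ}\Proj_x(V) = \sum_{x\in\Sigma^\circ}\Proj_x(V)$ by definition, while its kernel is the antidiagonal over $\sum_{x\in\Sigma_+^\circ}\Proj_x(V) \cap \sum_{x\in\Sigma_-^\circ}\Proj_x(V)$. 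The heart of the matter is Corollary~\ref{cor:support_additive}, which tells us precisely that this intersection equals $\sum_{x\in\Sigma_0^\circ}\Proj_x(V)$, matching the image of $(\mathrm{id},-\mathrm{id})$. The cokernel is therefore $\sum_{x\in\Sigma^\circ}\Proj_x(V)$ as required, and a quick diagram chase confirms that the isomorphism is the one induced by the augmentation. The hard part is thus the intersection identity supplied by Corollary~\ref{cor:support_additive}; everything else is routine homological algebra.
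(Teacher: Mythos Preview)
Your proof is correct and follows essentially the same approach as the paper's: both set up the Mayer--Vietoris short exact sequence of cellular chain complexes, read off the vanishing of \(\Ho_n(\Sigma,\Gamma)\) for \(n\ge 1\) from the long exact sequence (via injectivity of \(\Ho_0(\Sigma_0,\Gamma)\to\Ho_0(\Sigma_+,\Gamma)\)), and then identify \(\Ho_0(\Sigma,\Gamma)\) with \(\sum_{x\in\Sigma^\circ}\Proj_x(V)\) by invoking Corollary~\ref{cor:support_additive} for the intersection identity. Two inessential quibbles: what you call the ``connecting map'' is not the connecting homomorphism but the map induced by \((\iota_+,-\iota_-)\); and since \(\Sigma\) is finite the short exact sequence of chain complexes only involves finite coproducts, so the exactness hypothesis on countable coproducts is not actually needed at this step.
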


\begin{proof}
  The cellular chain complexes for these subcomplexes form an exact sequence
  \[
  \Cell(\Sigma_0) \mono \Cell(\Sigma_+)\oplus\Cell(\Sigma_-) \epi \Cell(\Sigma),
  \]
  which generates a Mayer--Vietoris long exact sequence for their homology groups.  This long exact sequence combined with~\eqref{eq:homology_finite} for \(\Sigma_0\), \(\Sigma_+\) and~\(\Sigma_-\) yields \(\Ho_n(\Sigma)=0\) for \(n\ge2\) and the injectivity of the map \(\Ho_0(\Sigma_0) \to \Ho_0(\Sigma_+)\), so that \(\Ho_1(\Sigma)=0\) as well.  Furthermore, we have a short exact sequence
  \[
  \sum_{x\in\Sigma_0^\circ} V_x \mono
  \sum_{x\in\Sigma_+^\circ} V_x \oplus
  \sum_{x\in\Sigma_-^\circ} V_x \epi
  \Ho_0(\Sigma).
  \]
  Now Corollary~\ref{cor:support_additive} yields
  \[
  \sum_{x\in\Sigma_+^\circ} V_x \cap \sum_{x\in\Sigma_-^\circ} V_x
  = \sum_{x\in\Sigma_0^\circ} V_x,\qquad
  \sum_{x\in\Sigma_+^\circ} V_x + \sum_{x\in\Sigma_-^\circ} V_x
  = \sum_{x\in\Sigma^\circ} V_x.
  \]
  Hence \(\Ho_0(\Sigma)\cong \sum_{x\in\Sigma^\circ} V_x\), so that~\(\Sigma\) verifies~\eqref{eq:homology_finite}.
\end{proof}

Next we consider the special case where~\(\Sigma\) is a single polysimplex, so that the idempotents~\(\Proj_\sigma\) for \(\sigma\in\Sigma\) all commute.  For each subset \(I\subseteq\Sigma^\circ\), let~\(\Proj_I^0\) be the product of~\(\Proj_x\) for \(x\in I\) and~\(1-\Proj_x\) for \(x\notin I\).  Since the idempotents~\(\Proj_x\) commute, this is again an idempotent endomorphism of~\(V\), and its action on \(\Cell(\Sigma)\) commutes with the boundary map.  Since \(V\cong \bigoplus_{I\subseteq\Sigma^\circ} \Proj_I^0(V)\), the chain complex \(\Cell(\Sigma)\) is a resolution of \(\sum_{x\in\Sigma} V_x\) if and only if \(\Proj_I^0\Cell(\Sigma)\) is a resolution of
\[
\Proj_I^0\biggl(\sum_{x\in\Sigma} V_x\biggr) =
\begin{cases}
  \Proj_I^0(V)&\text{if \(I\) is non-empty,}\\
  0&\text{if \(I\) is empty}
\end{cases}
\]
for each subset \(I\subseteq\Sigma^\circ\).  This is clear for empty~\(I\), so that we may assume \(I\neq\emptyset\).

The chain complex \(\Proj_I^0\Cell(\Sigma)\) has a very simple structure: the contribution from a polysimplex~\(\sigma\) is \(\Proj_I^0(V)\) if all vertices of~\(\sigma\) belong to~\(I\), and~\(0\) otherwise.  Hence \(\Proj_I^0\Cell(\Sigma)\) computes the homology of the subcomplex~\(\Sigma_I\) of~\(\Sigma\) spanned by~\(I\) with \emph{constant} coefficients in \(\Proj_I^0(V)\).  This homology agrees with \(\Proj_I^0(V)\) if~\(\Sigma_I\) is contractible.  But what if~\(\Sigma_I\) is not contractible?  Here our consistency conditions enter: we claim that~\(\Sigma_I\) is a face of~\(\Sigma\) or \(\Proj_I^0=0\), so that \(\Proj_I^0(V)=0\) and \(\Proj_I^0\Cell(\Sigma)=0\).  If \(x,y\in I\) and \(z\in\Hull(x,y)\) then \(\Proj_x\Proj_z\Proj_y=\Proj_x\Proj_y\) by Condition~\ref{consistent_simplex_convex}.  Since the idempotents involved commute, this means that \(\Proj_z\ge \Proj_x\Proj_y\), that is, \(1-\Proj_z\) vanishes on the range of \(\Proj_x\Proj_y\).  Hence \(\Proj_I^0=0\) if \(x,y\in I\) and \(z\notin I\).  Thus \(\Proj_I^0\neq0\) forces~\(I\) to be convex, that is, a single face of~\(\Sigma\).  Thus~\eqref{eq:homology_finite} holds if~\(\Sigma\) is a single polysimplex.

If~\eqref{eq:homology_finite} failed for some admissible finite subcomplex~\(\Sigma\), then there would be a minimal such~\(\Sigma\), which we pick.  The previous argument shows that~\(\Sigma\) cannot be a single polysimplex.  Lemma~\ref{lem:divide_and_conquer} shows that we cannot cut~\(\Sigma\) into smaller admissible subcomplexes.  We are going to show that any finite admissible subcomplex that is not a single polysimplex may be cut as in Lemma~\ref{lem:divide_and_conquer}.  This will show that no counterexample to~\eqref{eq:homology_finite} can exist.

Since~\(\Sigma\) is not a single polysimplex, there exists a chamber~\(\Delta\) in an apartment~\(\Apart\), and an affine root~\(\affRoot\) corresponding to a wall of~\(\Delta\), such that~\(\Sigma\) contains both a point \(x_+\in\Delta\) with \(\affRoot(x_+)>0\) and an \(x \in A\) with \(\affRoot(x) <0\).  Let \(\varrho \colon \Buil \to \Apart\) be the retraction centered at~\(\Delta\).  We claim that
\begin{align*}
  \Buil_+ &\defeq \{\sigma\in\Buil\mid a|_{\varrho\sigma}\ge0\},\\
  \Buil_- &\defeq \{\sigma\in\Buil\mid a|_{\varrho\sigma}\le0\},\\
  \Buil_0 \, &\defeq \{\sigma\in\Buil\mid a|_{\varrho\sigma}=0\}
\end{align*}
are convex subcomplexes of~\(\Buil\). Indeed, suppose that \(\Delta_1\) and~\(\Delta_2\) are chambers in~\(\Buil_-\), and consider some gallery between them.  If it contains a chamber in~\(\Buil_+\), then it must cross the wall corresponding to~\(a\) twice, and hence the gallery is not minimal.  The geodesic between two points \(x_1 \in \Delta_1\) and \(x_2 \in \Delta_2\) lies inside the union of all such minimal galleries, and therefore entirely in~\(\Buil_-\).  The same reasoning shows that~\(\Buil_+\) is convex, and \(\Buil_0 = \Buil_+ \cap \Buil_-\).

 Lemma~\ref{lem:convex_admissible} yields that \(\Sigma_?\defeq \Buil_?\cap\Sigma\) for \(?\in\{+,0,-\}\) are admissible subcomplexes of~\(\Sigma\).  Hence Lemma~\ref{lem:divide_and_conquer} applies and leads to a contradiction.  This finishes the proof of Theorem~\ref{the:main_theorem_resolution} for admissible finite subcomplexes~\(\Sigma\).

It remains to reduce the assertions in Theorem~\ref{the:main_theorem_resolution} for infinite~\(\Sigma\) to the finite case.  This requires an increasing filtration of~\(\Buil\) by finite convex subcomplexes~\(B_n\) with \(\bigcup B_n=\Buil\).  For instance, we may let~\(B_n\) be the fixed point subcomplex of~\(\CG_n\) for a decreasing sequence of compact open subgroups~\(\CG_n\) in~\(\G_\LF\) with \(\bigcap \CG_n=\nobreak\{1\}\) (Example~\ref{exa:convex}).  Then \(\Sigma_n \defeq \Sigma\cap B_n\) for \(n\in\N\) is an increasing sequence of finite admissible subcomplexes of~\(\Sigma\) with \(\bigcup \Sigma_n = \Sigma\), and
\[
\Cell(\Sigma) \cong \varinjlim \Cell(\Sigma_n).
\]
If we work with modules, then we can now use the exactness of inductive limits to finish the proof in the homological case very quickly.  The cohomological case requires more work and is understood best in the setting of general Abelian categories, where the arguments in the homological and cohomological case are equivalent by Lemma~\ref{lem:opposite_category}.

The maps \(\Cell(\Sigma_n)\to\Cell(\Sigma_{n+1})\) are split monomorphisms by definition.  Hence \(\Cell(\Sigma)\) is not just a colimit but also a homotopy colimit of the sequence of chain complexes \(\Cell(\Sigma_n)\).  This means that there is an exact sequence of chain complexes
\[
0 \to
\bigoplus_{n\in\N} \Cell(\Sigma_n) \xrightarrow{\Id-S}
\bigoplus_{n\in\N} \Cell(\Sigma_n) \to \Cell(\Sigma) \to 0;
\]
here \(S\) is the shift that embeds the summand \(\Cell(\Sigma_n)\) into \(\Cell(\Sigma_{n+1})\).  This exact sequence of chain complexes induces a long exact homology sequence, which we may rewrite as a short exact sequence
\[
0 \to \varinjlim \Ho_*(\Sigma_n)
\to \Ho_*(\Sigma)
\to \varinjlim\nolimits^1 \Ho_{*-1}(\Sigma_n)
\to 0.
\]
Equation~\eqref{eq:homology_finite} implies that the induced maps \(\Ho_0(\Sigma_n) \to \Ho_0(\Sigma_{n+1})\) are split monomorphisms for all \(n\in\N\), and we have already seen that the homology vanishes in other degrees.  Finally, we use that the derived inductive limit functor vanishes for inductive systems of \emph{split} monomorphisms \(\alpha_n\colon X_n\to X_{n+1}\) because such an inductive limit is equivalent to the coproduct of \(X_{n+1}/X_n\) and coproducts in~\(\Abcat\) are assumed to be exact.  Hence our exact sequence shows that the map \(\varinjlim \Ho_*(\Sigma_n) \to \Ho_*(\Sigma)\) is an isomorphism.  The arguments in the cohomological case are dual.

\section{Serre subcategories of smooth representations}
\label{sec:Serre}

Let~\(R\) be a ring with \(\nicefrac1p\in R\).  For instance, \(R\) may be~\(\Z[\nicefrac1p]\) or a field of characteristic not equal to~\(p\).  We define a Hecke algebra \(\Hecke(\G_\LF,R)\) with coefficients in~\(R\) as in Section~\ref{sec:representations}.  Let \((\Proj_x)_{x\in\Buil^\circ}\) be an equivariant and consistent system of idempotents in \(\Hecke(\G_\LF,R)\), that is, the conditions in Definition~\ref{def:consistent_idempotents} hold in \(\Hecke(\G_\LF,R)\).

Let~\(\Abcat\) be an \(R\)\nb-linear category with \emph{exact} countable inductive limits.  The main example is the category of \(R\)\nb-modules.  The opposite category of \(R\)\nb-modules does not work because its inductive limits correspond to projective limits of modules, which are not exact.

An \emph{\(\Hecke(\G_\LF,R)\)-module in~\(\Abcat\)} is an object of~\(\Abcat\) equipped with a ring homomorphism \(\Hecke(\G_\LF,R) \to \Endo(V)\).  We let \(\Rep\) be the category of \(\Hecke(\G_\LF,R)\)-modules in~\(\Abcat\).  We define \emph{smooth} \(\Hecke(\G_\LF,R)\)-modules in~\(\Abcat\) exactly as in Definition~\ref{def:smooth_module}.

If~\(V\) is an \(\Hecke(\G_\LF,R)\)-module in~\(\Abcat\), then the idempotents \(\Proj_x\) in~\(\Hecke(\G_\LF,R)\) are represented by an equivariant consistent system of idempotents in \(\Endo(V)\), which we still denote by \((\Proj_x)\).  This construction is natural in the formal sense, so that the resulting cosheaf \(\Gamma(V)\) and its cellular chain complex depend functorially on~\(V\).

The exactness of inductive limits in~\(\Abcat\) means that inductive limits of monomorphisms in~\(\Abcat\) are again monomorphisms.  In particular, since the natural maps \(\sum_{x\in\Sigma_\fin^\circ} \Proj_x(V) \to V\) for finite convex subcomplexes \(\Sigma_\fin\subseteq\Sigma\) are monomorphisms by definition, so is the induced map \(\varinjlim_{\Sigma_\fin} \sum_{x\in\Sigma_\fin^\circ} \Proj_x(V)\to V\).  Its image is
\[
\sum_{x\in\Sigma^\circ} \Proj_x(V) \defeq \im \biggl(\bigoplus_{x\in\Sigma^\circ} \Proj_x(V)\to V\biggr).
\]
This is the supremum of \(\{\Proj_x(V)\mid x\in\Sigma^\circ\}\) in the directed set of subobjects of~\(V\).  Hence Theorem~\ref{the:main_theorem_resolution} yields
\[
\Ho_0\bigl(\Sigma,\Gamma(V)\bigr) \cong \sum_{x\in\Sigma^\circ} \Proj_x(V) \subseteq V.
\]

\begin{theorem}
  \label{the:Serre_subcategory}
  Let~\(R\) be a ring with \(\nicefrac1p\in R\) and let \((\Proj_x)_{x\in\Buil^\circ}\) be an equivariant consistent system of idempotents in \(\Hecke(\G_\LF,R)\).  Let~\(\Abcat\) be an \(R\)\nb-linear category with exact countable inductive limits.

  The class \(\Rep(\Proj_x)\) of all \(\Hecke(\G_\LF,R)\)-modules~\(V\) in~\(\Abcat\) with
  \[
  V=\sum_{x\in\Buil^\circ} \Proj_x(V)
  \]
  is a Serre subcategory, that is, it is hereditary for extensions, quotients and subobjects \textup{(}and closed under isomorphism, anyway\textup{)}.  Furthermore, this class is closed under coproducts and hence under arbitrary colimits, and all \(V\in\Rep(\Proj_x)\) are smooth.
\end{theorem}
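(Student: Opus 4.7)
The plan is to leverage Theorem~\ref{the:main_theorem_resolution} by observing that the construction \(V\mapsto\Cell(\Buil,\Gamma(V))\) is exact in~\(V\). Each \(\Proj_\sigma\colon V\to V\) is an idempotent endomorphism induced by the \(\Hecke(\G_\LF,R)\)-action on~\(V\) and hence is intertwined by every morphism in \(\Rep\); the functor \(V\mapsto \Proj_\sigma(V)=\im(\Proj_\sigma)\) is therefore a direct summand of the identity via the canonical splitting \(V=\im(\Proj_\sigma)\oplus\ker(\Proj_\sigma)\) and in particular exact. Combining this with Theorem~\ref{the:main_theorem_resolution}, the augmented complex
\[
\tilde C(V)\defeq \bigl(\dotsb\to\Cell[1](\Buil,\Gamma(V))\to\Cell[0](\Buil,\Gamma(V))\xrightarrow{\alpha} V\to 0\bigr)
\]
is an exact functor of~\(V\) whose homology vanishes in every degree except possibly at the slot~\(V\), where it equals \(V\bigm/\sum_{x\in\Buil^\circ}\Proj_x(V)\).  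Hence \(V\in\Rep(\Proj_x)\) if and only if \(\tilde C(V)\) is acyclic.

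Closure under quotients is then immediate from naturality of the~\(\Proj_x\): for a surjection \(q\colon V\epi W\) with \(V=\sum_x\Proj_x(V)\) we have \(q\circ\Proj_x=\Proj_x\circ q\), so \(W=q(V)=q\bigl(\sum_x\Proj_x(V)\bigr)=\sum_x q\bigl(\Proj_x(V)\bigr)=\sum_x\Proj_x(W)\).  For extensions and subobjects, I would apply the exact functor \(\tilde C\) to a short exact sequence \(0\to V'\to V\to V''\to 0\) to obtain a short exact sequence of chain complexes, whence a long exact sequence in homology; any two of \(\tilde C(V'),\tilde C(V),\tilde C(V'')\) being acyclic forces the third, handling extensions directly. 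For a subobject \(V'\subseteq V\) with \(V\in\Rep(\Proj_x)\), quotient closure first gives \(V''\defeq V/V'\in\Rep(\Proj_x)\), and the long exact sequence then yields \(V'\in\Rep(\Proj_x)\). The subtle point, and the main potential obstacle, is that subobject closure is not visible directly on generators (a decomposition \(v=\sum\Proj_{x_i}(v_i)\) with \(v_i\in V\) need not use \(v_i\in V'\)); it is only through the acyclicity of \(\tilde C\), i.e.\ the full strength of Theorem~\ref{the:main_theorem_resolution}, that it falls out.

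Closure under coproducts follows since exact countable coproducts in~\(\Abcat\) commute with each \(V\mapsto\Proj_x(V)\) and hence with \(V\mapsto\sum_x\Proj_x(V)\).  It remains to prove that every \(V\in\Rep(\Proj_x)\) is smooth: each \(\Proj_x\), viewed as a locally constant compactly supported function on~\(\G_\LF\), satisfies \(\av{\CG_{n_x}}\ast\Proj_x=\Proj_x\) for some compact open pro-\(p\)-subgroup \(\CG_{n_x}\), so \(\Proj_x(V)\subseteq \av{\CG_{n_x}}(V)\subseteq\Smooth(V)\). Because filtered colimits of monomorphisms in~\(\Abcat\) remain monomorphisms, \(\Smooth(V)\to V\) is a subobject, and \(V=\sum_x\Proj_x(V)\subseteq\Smooth(V)\subseteq V\) forces \(V=\Smooth(V)\).
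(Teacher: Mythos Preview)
Your proof is correct and follows essentially the same route as the paper's: both recognize that \(V\mapsto\tilde C(V)\) is an exact functor, both handle quotients and coproducts directly via naturality of the \(\Proj_x\), and both derive closure under subobjects by first passing to the quotient and then invoking the two-out-of-three property for acyclicity in the long exact homology sequence attached to \(\tilde C\). The only cosmetic difference is that the paper treats extensions via the Snake Lemma applied to the augmentation maps alone, whereas you use the full complex \(\tilde C\); either works.
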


\begin{proof}
  We abbreviate \(\Subrep\defeq\Rep(\Proj_x)\).  We have \(V\in\Subrep\) if and only if the augmentation map \(\alpha_V\colon \bigoplus_{x\in\Buil^\circ} \Proj_x(V)\to V\) is an epimorphism.  If \(V_1\epi V_2\) is an epimorphism, then so is the induced map \(\bigoplus_{x\in\Buil^\circ} \Proj_x(V_1)\epi \bigoplus_{x\in\Buil^\circ} \Proj_x(V_2)\).  Hence~\(\alpha_{V_2}\) is an epimorphism if~\(\alpha_{V_1}\) is one.  Thus quotients of objects in~\(\Subrep\) remain in~\(\Subrep\).  Similarly, coproducts of objects in~\(\Subrep\) remain in~\(\Subrep\).  Since colimits are quotients of coproducts, this implies that~\(\Subrep\) is closed under arbitrary colimits.

  Let \(V_1\mono V_2\epi V_3\) be an extension of \(\Hecke(\G_\LF,R)\)-modules in~\(\Abcat\).  Then we get an extension
  \[
  \bigoplus_{x\in\Buil^\circ} \Proj_x(V_1)\mono \bigoplus_{x\in\Buil^\circ} \Proj_x(V_2) \epi  \bigoplus_{x\in\Buil^\circ} \Proj_x(V_3)
  \]
  as well.  The Snake Lemma shows that \(\alpha_{V_2}\) is an epimorphism if \(\alpha_{V_1}\) and~\(\alpha_{V_3}\) are.  Thus~\(\Subrep\) is closed under extensions.

  For any \(x\in\Buil^\circ\), there is a compact open subgroup~\(\CG_x\) such that~\(\Proj_x\) is \(\CG_x\)\nb-biinvariant.  Given a finite subcomplex~\(\Sigma\), we let \(\CG_\Sigma \defeq \bigcap_{x\in\Sigma^\circ} \CG_x\).  Then~\(\CG_\Sigma\) acts trivially on \(\sum_{x\in\Sigma^\circ} \Proj_x(V)\).  Since \(\sum_{x\in\Buil^\circ} \Proj_x(V)\) is the inductive limit of such subspaces, any \(\Hecke(\G_\LF,R)\)-module in~\(\Subrep\) is smooth.

  Finally, it remains to show that subobjects of objects in~\(\Subrep\) are again in~\(\Subrep\).  Let \(V_1\mono V_2\epi V_3\) be an extension in~\(\Subrep\).  The augmented cellular chain complexes
  \[
  C_j\defeq \bigl( \Cell \bigl(\Buil,\Gamma(V_j) \bigr) \to V_j \bigr)
  \]
  for \(j=1,2,3\) form an extension of chain complexes \(C_1\mono C_2\epi C_3\) as well because taking the range of an idempotent in~\(\Hecke\) is an exact functor on \(\Rep\).  Theorem~\ref{the:main_theorem_resolution} yields that \(V_j\in\Subrep\) if and only if~\(C_j\) is exact.  Now the long exact homology sequence shows that all three of \(C_1\), \(C_2\) and~\(C_3\) are exact once two of them are.  If \(V_2\in\Subrep\), then \(V_3\in\Subrep\) because~\(\Subrep\) is hereditary for quotients; the two-out-of-three property yields \(V_1\in\Subrep\) as well, that is, \(\Subrep\) is closed under subobjects.
\end{proof}

Let~\(V\) be an \(\Hecke(\G_\LF,R)\)-module.  Then \(\sum_{x\in\Buil^\circ} \Proj_x(V)\subseteq V\) is an \(\Hecke(\G_\LF,R)\)-module as well because it is the image of a morphism between \(\Hecke(\G_\LF,R)\)-modules.  Thus we may define a functor
\[
\Phi \colon \Rep\to\Rep,\qquad
V\mapsto \sum_{x\in\Buil^\circ} \Proj_x(V),
\]
which comes with a natural transformation \(\Phi (V) \to V\).

\begin{proposition}
  \label{pro:subcategory_products_coproducts}
  The functor~\(\Phi\) is a retraction from~\(\Rep\) onto the full subcategory \(\Rep(\Proj_x)\), that is, \(\Phi (V)\in\Rep(\Proj_x)\) for all~\(V\) and the natural map \(\Phi (V)\to V\) is an isomorphism for \(V\in\Rep(\Proj_x)\).  The functor~\(\Phi \) is right adjoint to the embedding functor \(\Rep(\Proj_x)\to\Rep\), that is, the natural map \(\Phi (W)\to W\) induces an isomorphism \(\Hom(V,W) \cong \Hom\bigl(V,\Phi (W)\bigr)\) for all \(V\in\Rep(\Proj_x)\), \(W\in\Rep\).
\end{proposition}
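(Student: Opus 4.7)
The plan is to verify the retraction property and the adjunction separately, both being fairly direct consequences of \(\Hecke\)\nb-linearity forcing every morphism to commute with the idempotents \(\Proj_x\), together with the careful image-calculus in the abelian category \(\Rep\).

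For the retraction, I would first check that \(\Phi(V)\in\Rep(\Proj_x)\) for every \(V\in\Rep\).  Since \(\Proj_x(V)\subseteq \Phi(V)\) and \(\Proj_x\) is idempotent, \(\Proj_x\bigl(\Phi(V)\bigr) \supseteq \Proj_x\bigl(\Proj_x(V)\bigr) = \Proj_x(V)\); summing over \(x\in\Buil^\circ\) gives \(\sum_{x\in\Buil^\circ} \Proj_x\bigl(\Phi(V)\bigr) \supseteq \Phi(V)\), and the reverse inclusion is obvious as \(\Phi(V)\) is a subobject of~\(V\).  Hence \(\Phi(V)\in\Rep(\Proj_x)\).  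That the canonical map \(\Phi(V)\to V\) is an isomorphism precisely when \(V\in\Rep(\Proj_x)\) is a tautology, being the defining condition of the subcategory.

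For the adjunction, let \(V\in\Rep(\Proj_x)\) and \(W\in\Rep\).  Given \(f\in\Hom_\Rep(V,W)\), I would produce the unique factorisation through the canonical monomorphism \(m\colon \Phi(W)\mono W\) as follows.  The augmentation \(e\colon \bigoplus_{x\in\Buil^\circ} \Proj_x(V)\to V\) is an epimorphism by the assumption \(V\in\Rep(\Proj_x)\).  Because \(f\) is \(\Hecke(\G_\LF,R)\)\nb-linear, \(f\circ \Proj_x = \Proj_x\circ f\), so on the summand \(\Proj_x(V)\) the morphism \(f\) takes values in \(\Proj_x(W)\).  Thus \(f\circ e\) factors as \(m\circ g\) for a morphism \(g\colon \bigoplus_x \Proj_x(V)\to \Phi(W)\) given on summands by the obvious inclusions.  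Since \(e\) is epi, the image of \(f\circ e\) equals the image of~\(f\), and it is contained in \(\im(m)=\Phi(W)\); the universal property of images then yields a unique \(\tilde f\colon V\to \Phi(W)\) with \(m\circ \tilde f = f\).  Uniqueness of~\(\tilde f\) is immediate because \(m\) is monic, and naturality in \(V\) and~\(W\) is automatic from the construction.

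There is no genuine obstacle here: the entire content reduces to the observation that \(\Hecke\)\nb-linearity forces \(f\) to preserve the ranges of the idempotents, and that \(\Phi(W)\) is by construction the supremum of these ranges in~\(W\).  The only mild care needed is to work with images and epi/mono factorisations in the abelian category \(\Rep\) rather than with set-theoretic sums; all heavy lifting has already been carried out in Theorem~\ref{the:main_theorem_resolution} and Theorem~\ref{the:Serre_subcategory}.
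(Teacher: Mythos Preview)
Your proof is correct and follows essentially the same approach as the paper's.  Both arguments hinge on the observation that \(\Proj_x\bigl(\Phi(V)\bigr)=\Proj_x(V)\) for the retraction, and on \(\Hecke\)\nb-linearity forcing \(f\) to carry \(\Proj_x(V)\) into \(\Proj_x(W)\) for the adjunction; the paper packages the latter as ``\(\Phi\) is a functor, so \(f\colon V\to W\) restricts to \(\Phi(V)\to\Phi(W)\),'' whereas you spell it out via the augmentation epimorphism and image factorisation, but the content is identical.  One small remark: your closing sentence overstates the dependence on Theorem~\ref{the:main_theorem_resolution} and Theorem~\ref{the:Serre_subcategory}---neither is actually invoked here, and the proposition is a direct piece of category-theoretic bookkeeping.
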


\begin{proof}
  We have \(\Proj_x\bigl(\Phi (V)\bigr) = \Proj_x(V)\) because \(\Phi (V)\subseteq V\) and \(\Proj_x(V)\subseteq \Phi (V)\).  Hence \(\Phi \bigl(\Phi (V)\bigr)\cong \Phi (V)\).  By definition, \(\Phi (V)\cong V\) if and only if \(V\in\Rep(\Proj_x)\).  Thus~\(\Phi\) is a retraction from~\(\Rep\) onto~\(\Rep(\Proj_x)\).  An \(\Hecke(\G_\LF,R)\)-module homomorphism \(V\to W\) between \(V,W\in\Rep\) restricts to a map \(\Phi (V)\to\Phi (W)\) because~\(\Phi \) is a functor.  If \(V\in\Rep(\Proj_x)\), that is, \(V\cong\Phi (V)\), then this means that any \(\Hecke(\G_\LF,R)\)-module homomorphism \(V\to W\) factors through the embedding \(\Phi(W)\to W\), necessarily uniquely.  Thus \(\Hom(V,W) \cong \Hom\bigl(V,\Phi(W)\bigr)\) for all \(V\in\Rep(\Proj_x)\), \(W\in\Rep\).
\end{proof}

We may reformulate the definition of \(\Rep(\Proj_x)\) using a fundamental domain for the \(\G_\LF\)\nb-action on~\(\Buil^\circ\).  Recall that~\(\G_\LF\) acts transitively on the set of chambers of~\(\Buil\) and that any vertex of~\(\Buil\) is contained in a chamber~\(\Delta\).  Therefore, if~\(\Delta\) is a chamber in~\(\Buil\), then any \(\G_\LF\)\nb-orbit on~\(\Buil^\circ\) contains a vertex of~\(\Delta\).  Since \(g\Proj_xg^{-1}= \Proj_{gx}\) for all \(g\in\G_\LF\), we may rewrite
\[
\sum_{x\in\Buil^\circ} \Proj_x(V)
= \sum_{g\in\G_\LF} \sum_{x\in\Delta^\circ} \Proj_{gx}(V)
= \sum_{g\in\G_\LF} g\cdot \biggl(\sum_{x\in\Delta^\circ} \Proj_x(V)\biggr).
\]
Thus \(V\in\Rep(\Proj_x)\) if and only if the subspace \(\sum_{x\in\Delta^\circ} \Proj_x(V)\) generates~\(V\) as an \(\Hecke(\G_\LF,R)\)\nb-module.  If \(\Proj_x=\av{\CG_x}\) for a consistent system of compact open subgroups \((\CG_x)_{x\in\Buil^\circ}\) (see Lemma~\ref{lem:consistent_subgroups}), then \(\Proj_x(V)\) is the subspace of \(\CG_x\)\nb-invariants in~\(V\).  Thus~\(\Rep(\av{\CG_x})\) consists of those representations that are generated by their \(\CG_x\)\nb-invariant vectors for \(x\in\Delta^\circ\).  This is the situation considered in~\cite{Bernstein:Centre}.

If the stabiliser~\(\StabS_\Delta\) operates non-trivially on the vertices of~\(\Delta\), then we do not need all vertices of~\(\Delta\) to generate representations in \(\Rep(\Proj_x)\): a set of representatives for the orbits of~\(\G_\LF\) on~\(\Delta\) suffices.  For instance, if \(\G=\Gl_d(\LF)\), then a single vertex suffices (see Section~\ref{sec:examples_consistent}), and \(\Rep(\Proj_x)\) is the set of all \(\Hecke(\G_\LF,R)\)-modules in~\(\Abcat\) that are generated by the range of~\(\Proj_{[\integ^d]}\), where~\([\integ^d]\) is the vertex in \(\BuilGl\) with stabiliser \(\Gl_d(\integ)\).

Our next goal is to show that \(\Rep(\Proj_x)\) is equivalent to the category of unital \(\support_\Delta\Hecke(\G_\LF,R)\support_\Delta\)-modules for any chamber~\(\Delta\), where~\(\support_\Delta\) denotes the support projection of~\(\Delta\) studied in Section~\ref{sec:support_projections}.

Let~\((\Sigma_n)_{n\in\N}\) be an increasing sequence of finite convex subcomplexes of~\(\Buil\) 
with \( \bigcup_{n \in \N} \Sigma_n = \Buil \), and let \(\support_n\defeq \support_{\Sigma_n}\).  
Then \(\support_n\le \support_{n+1}\) for all \(n\in\N\), that is, 
\(\support_n\support_{n+1}=\support_n = \support_{n+1}\support_n\).  Let
\[
\Hecke(\Proj_x) \defeq \bigcup_{n\in\N} \support_n\Hecke(\G_\LF,R)\support_n.
\]
Since this union is increasing, \(\Hecke(\Proj_x)\) is a subalgebra of~\(\Hecke(\G_\LF,R)\).  By construction, \((\support_n)_{n\in\N}\) is an approximate unit of idempotents in~\(\Hecke(\Proj_x)\).  An \(\Hecke(\Proj_x)\)-module~\(V\) in~\(\Abcat\) is called \emph{smooth} if \(V=\varinjlim \support_n V\), where~\(\support_n V\) denotes the image of~\(\support_n\) as an operator on~\(V\).

\begin{proposition}
  \label{pro:Rep_Proj_as_module_category}
  The category \(\Rep(\Proj_x)\) is isomorphic to the category of smooth \(\Hecke(\Proj_x)\)-modules.
\end{proposition}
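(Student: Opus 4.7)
I plan to prove the proposition by exhibiting mutually inverse functors between the two categories. Both have the same underlying objects of~$\Abcat$, so this reduces to a bijection between $\Hecke(\G_\LF,R)$-module structures satisfying $V=\sum_x\Proj_x(V)$ and smooth $\Hecke(\Proj_x)$-module structures, plus matching the notions of morphism. The restriction functor sends $V\in\Rep(\Proj_x)$ to~$V$ with action restricted along $\Hecke(\Proj_x)\subseteq\Hecke(\G_\LF,R)$. This lands in smooth $\Hecke(\Proj_x)$-modules by Theorem~\ref{the:support_projection}: the image of~$\support_n$ on~$V$ is $\sum_{x\in\Sigma_n^\circ}\Proj_x(V)$, so $V=\sum_{x\in\Buil^\circ}\Proj_x(V)=\bigcup_n\support_n V=\varinjlim_n\support_n V$.

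The reverse direction, extending a smooth $\Hecke(\Proj_x)$-module~$W$ to an $\Hecke(\G_\LF,R)$-module in~$\Rep(\Proj_x)$, is the main obstacle. I would first prove the central lemma: for every $h\in\Hecke(\G_\LF,R)$ and every~$n$, there exists $n'\ge n$ with $\support_{n'}\,h\,\support_n=h\,\support_n$ in $\Hecke(\G_\LF,R)$. Equivariance of the idempotents gives $g\support_n=\support_{g\Sigma_n}\,g$ for $g\in\G_\LF$, and Corollary~\ref{cor:support_additive} yields $\support_{g\Sigma_n}\le\support_{n'}$ as soon as $g\Sigma_n\subseteq\Sigma_{n'}$, so $\support_{n'}\,g\,\support_n=g\support_n$. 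For a general compactly supported locally constant~$h$, write $h=\sum c_i\chi_{g_iK}$ with~$K$ small enough that $h$ is right-$K$-invariant and $\support_n$ is left-$K$-invariant; then $h\,\support_n=\mu(K)\sum c_i\,g_i\support_n$, and properness of the $\G_\LF$-action on~$\Buil$ lets us find a single~$n'$ absorbing all the finitely many~$g_i$.

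Given the lemma, define $h\cdot v\defeq(\support_{n'}\,h\,\support_n)\cdot v$, where $\support_n v=v$, $n'$ is as above, and the right-hand side is interpreted via the $\Hecke(\Proj_x)$-action on~$W$. Independence of~$n'$ follows because $\support_{n''}\,h\,\support_n=\support_{n'}\,h\,\support_n$ in $\Hecke(\G_\LF,R)$ for $n''\ge n'$; independence of~$n$ is handled by factoring $\support_{n'}\,h\,(\support_{n_1}-\support_{n_2})=(\support_{n'}\,h\,\support_M)(\support_{n_1}-\support_{n_2})$ inside $\Hecke(\Proj_x)$ for $M\ge\max(n_1,n_2)$ and using $(\support_{n_1}-\support_{n_2})v=0$ together with associativity. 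Multiplicativity $h_1(h_2v)=(h_1h_2)v$ follows from applying the lemma twice. Theorem~\ref{the:support_projection} applied to the system $\Proj_x\in\Endo(W)$ then shows $W=\bigcup_n\support_n W=\sum_x\Proj_x(W)$, so the extended module lies in~$\Rep(\Proj_x)$.

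It remains to check that the functors compose to the identity and that morphisms correspond. Starting from $V\in\Rep(\Proj_x)$, associativity of the original action and smoothness of~$V$ give $(\support_{n'}\,h\,\support_n)v=\support_{n'}(hv)=hv$ for $n'$ large, so the extended action recovers the original one; restricting the extended action on a smooth $\Hecke(\Proj_x)$-module recovers the original because any $f\in\support_k\Hecke\support_k$ satisfies $\support_{n'}\,f\,\support_n=f$ for $n,n'\ge k$. The same formula shows that every $\Hecke(\Proj_x)$-linear map between objects of~$\Rep(\Proj_x)$ is automatically $\Hecke(\G_\LF,R)$-linear, completing the isomorphism of categories. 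The essential technical point is the key lemma, whose proof depends crucially on equivariance of the idempotents and properness of the $\G_\LF$-action on the building.
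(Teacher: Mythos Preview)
Your proposal is correct and follows essentially the same route as the paper. Both arguments rest on the identical key observation: for any \(h\in\Hecke(\G_\LF,R)\) and any~\(n\) there is \(n'\ge n\) with \(\support_{n'}\,h\,\support_n = h\,\support_n\), proved exactly as you describe via equivariance \(g\,\support_{\Sigma_n}\,g^{-1}=\support_{g\Sigma_n}\) and the inclusion \(S\cdot\Sigma_n\subseteq\Sigma_{n'}\) for the support~\(S\) of~\(h\). The only difference is packaging: the paper interprets this lemma as saying that \(\Hecke(\G_\LF,R)\) embeds in the multiplier algebra of \(\Hecke(\Proj_x)\), and then invokes the standard fact that smooth modules over an idempotented algebra extend to modules over its multipliers; you instead build the extended action \(h\cdot v\defeq(\support_{n'}\,h\,\support_n)\cdot v\) by hand and verify well-definedness and multiplicativity directly. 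Your version is more explicit and self-contained; the paper's is shorter but relies on the reader recognising the multiplier formalism. Two minor remarks: the inequality \(\support_{g\Sigma_n}\le\support_{n'}\) for \(g\Sigma_n\subseteq\Sigma_{n'}\) is stated right after Definition~\ref{def:support_projection} rather than in Corollary~\ref{cor:support_additive}; and ``properness'' is not quite the right word---what you use is that stabilisers of vertices are open, so a compact set in~\(\G_\LF\) moves the finite set~\(\Sigma_n\) into a finite set.
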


\begin{proof}
  Any object~\(V\) of \(\Rep\) is also an \(\Hecke(\Proj_x)\)-module.  By definition, an \(\Hecke(\Proj_x)\)-module is smooth if and only if \(V=\varinjlim \support_n V = \varinjlim \sum_{x\in\Sigma_n^\circ} \Proj_x(V)\), that is, if and only if~\(V\) belongs to \(\Rep(\Proj_x)\).  It remains to show that any smooth \(\Hecke(\Proj_x)\)-module structure extends to an \(\Hecke(\G_\LF,R)\)-module structure.  If \(f\in\Hecke(\G_\LF,R)\) and \(g\in\Hecke(\Proj_x)\), then~\(f\) is supported in some compact subset~\(S\) of~\(\G_\LF\) and \(g\in\support_n\Hecke(\G_\LF,R)\support_n\) for some \(n\in\N\). Choose \(N\ge n\) for which~\(\Sigma_N\) contains \(S \cdot \Sigma_n\) and let~\(\lambda\) denote the left regular representation of \(\Hecke(\G_\LF,R)\). Then \(\im \lambda(f*g) \subseteq \sum_{x\in\Sigma_N^\circ} \im \lambda(\Proj_x) = \support_N\Hecke(\G_\LF,R)\), so that \(f*g = \support_N*f*g = \support_N*f*g*\support_N\) because \(N\ge n\).  Thus~\(f\) is a left multiplier of \(\Hecke(\Proj_x)\).  Similarly, \(f\) is a right multiplier of~\(\Hecke(\Proj_x)\).  Thus any smooth \(\Hecke(\Proj_x)\)-module is a module over \(\Hecke(\G_\LF,R)\) as well.
\end{proof}

\begin{theorem}
  \label{the:Hecke_Proj_Morita_unital}
  Let~\(\Delta\) be a chamber of~\(\Buil\) and let \(\Hecke(\Proj_x)_\Delta\defeq \support_\Delta \Hecke(\G_\LF,R)\support_\Delta\).  The category \(\Rep(\Proj_x)\) is equivalent to the category of unital \(\Hecke(\Proj_x)_\Delta\)-modules.
\end{theorem}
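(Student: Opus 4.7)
The plan is to realise the asserted equivalence as a Morita-type equivalence for the idempotent \(\support_\Delta\in\Hecke(\Proj_x)\).  By Proposition~\ref{pro:Rep_Proj_as_module_category} we identify \(\Rep(\Proj_x)\) with the category of smooth \(\Hecke(\Proj_x)\)-modules in~\(\Abcat\), so the equivalence is realised by the quasi-inverse pair
\[
F\colon V\mapsto \support_\Delta V,\qquad
G\colon M\mapsto \Hecke(\Proj_x)\support_\Delta\otimes_{\Hecke(\Proj_x)_\Delta} M,
\]
with the tensor product interpreted as a coequaliser in~\(\Abcat\).

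The central step is to prove that \(\support_\Delta\) is a \emph{full} idempotent in the sense that \(\Hecke(\Proj_x)\cdot\support_\Delta\cdot\Hecke(\Proj_x) = \Hecke(\Proj_x)\); equivalently, each approximate-unit element \(\support_n\) lies in the two-sided ideal generated by~\(\support_\Delta\).  Since \(\G_\LF\) acts transitively on chambers, any chamber of~\(\Buil\) has the form \(g\Delta\), and equivariance gives \(\support_{g\Delta}=g\support_\Delta g^{-1}\).  Choosing a compact open pro-\(p\)-subgroup \(\CG\) so small that \(\support_\Delta\) is \(\CG\)-biinvariant, we obtain
\[
\support_{g\Delta} = (g\av{\CG})\,\support_\Delta\,(\av{\CG}\,g^{-1})
\]
with both outer factors in \(\Hecke(\G_\LF,R)\), placing \(\support_{g\Delta}\) inside \(\Hecke(\G_\LF,R)\support_\Delta\Hecke(\G_\LF,R)\).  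For each polysimplex \(\sigma\in\Sigma_n\), choose any chamber \(g_\sigma\Delta\) with \(\sigma\prec g_\sigma\Delta\); the argument in the proof of Theorem~\ref{the:support_projection} shows \(\Proj_\sigma\support_{g_\sigma\Delta}=\Proj_\sigma\), so \(\Proj_\sigma\) itself lies in the same ideal.  The formula \(\support_{\Sigma_n}=\sum_{\sigma\in\Sigma_n}(-1)^{\dim\sigma}\Proj_\sigma\) of Theorem~\ref{the:support_projection} then exhibits \(\support_{\Sigma_n}\) as a finite sum of ideal elements, and sandwiching with \(\support_{\Sigma_n}\) on both sides places the result inside \(\Hecke(\Proj_x)\support_\Delta\Hecke(\Proj_x)\), as required.

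With fullness in hand, \(F\) and~\(G\) are quasi-inverse by a standard Morita-theoretic verification.  For a smooth \(\Hecke(\Proj_x)\)-module~\(V\), the counit \(GF(V)\to V\) has image \(\Hecke(\Proj_x)\support_\Delta V = \Hecke(\Proj_x)\support_\Delta\Hecke(\Proj_x)V = \Hecke(\Proj_x)V = V\) by fullness and smoothness, and is injective by a dual argument; the unit on a unital \(\Hecke(\Proj_x)_\Delta\)-module~\(M\) reduces to the tautological isomorphism \(\support_\Delta\Hecke(\Proj_x)\support_\Delta\otimes_{\Hecke(\Proj_x)_\Delta}M \cong \Hecke(\Proj_x)_\Delta\otimes_{\Hecke(\Proj_x)_\Delta}M \cong M\).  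The main obstacle is defining the functor~\(G\) rigorously in the generality of an \(R\)-linear abelian category~\(\Abcat\) with only exact countable colimits: tensor products over a non-unital ring are not directly available in~\(\Abcat\) and should instead be constructed as the countable filtered colimit \(G(M) = \varinjlim_n\support_n\Hecke(\G_\LF,R)\support_\Delta\otimes_{\Hecke(\Proj_x)_\Delta}M\), each term being a well-defined coequaliser built from finitely many copies of~\(M\) via the \(R\)-linear structure of~\(\Abcat\).
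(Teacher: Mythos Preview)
Your approach is essentially the same as the paper's: reduce via Proposition~\ref{pro:Rep_Proj_as_module_category} to showing that \(\support_\Delta\) is a full idempotent in \(\Hecke(\Proj_x)\), then invoke Morita equivalence.  The paper's fullness argument is slightly more direct than yours: since \(\Proj_\sigma\le\support_\Delta\) for every face \(\sigma\prec\Delta\), one has \(\Proj_\sigma=\Proj_\sigma\support_\Delta\) already in the ideal, and then \(\Proj_{g\sigma}=g\Proj_\sigma g^{-1}\) covers every polysimplex of~\(\Buil\) without the detour through \(\support_{g\Delta}\).  For the Morita step the paper simply cites a reference on rings with local units rather than writing out the functors~\(F,G\); your discussion of how to realise~\(G\) as a filtered colimit in a general \(R\)-linear \(\Abcat\) is a legitimate technical point the paper passes over.
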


\begin{proof}
  This follows from Proposition~\ref{pro:Rep_Proj_as_module_category} if \(\Hecke(\Proj_x)\) is Morita equivalent to \(\Hecke(\Proj_x)_\Delta\).

  The two-sided ideal in~\(\Hecke(\G_\LF,R)\) generated by~\(\support_\Delta\) contains \(\Proj_{g\sigma} = g\Proj_\sigma g^{-1}\) for all \(g\in\G_\LF\) and \(\sigma\in\Delta\) because \(\Proj_\sigma\le\support_\Delta\).  Hence it contains~\(\support_\Sigma\) for any finite convex subcomplex~\(\Sigma\) of~\(\Buil\) by the formula in Theorem~\ref{the:support_projection} for the support projections.  Thus the two-sided ideal of \(\Hecke(\Proj_x)\) generated by~\(\support_\Delta\) contains the approximate unit~\(\support_n\).  This means that the idempotent~\(\support_\Delta\) is full in \(\Hecke(\Proj_x)\).  
So by \cite{Garcia-Simon:Morita}*{Theorem 2.8} the \(\Hecke(\Proj_x)_\Delta\)-\(\Hecke(\Proj_x)\)-bimodules \(\support_\Delta \Hecke(\Proj_x)\) and \(\Hecke(\Proj_x)\support_\Delta\) yield a Morita equivalence between \(\Hecke(\Proj_x)\) and \(\Hecke(\Proj_x)_\Delta\).
\end{proof}

Now we assume that~\(\Abcat\) has exact countable products in order to study the cohomology of the cellular cochain complex \(\DCell\bigl(\Buil,\hat\Gamma(V)\bigr)\) for an \(\Hecke(\G_\LF,R)\)-module~\(V\) in~\(\Abcat\).  Theorem~\ref{the:main_theorem_resolution} yields
\[
\Ho^0\bigl(\Buil,\hat\Gamma(V)\bigr) \cong \varprojlim \support_n(V).
\]
Since \(\support_n(V)=\support_n\bigl(\Phi(V)\bigr)\), this implies
\[
\Ho^0\bigl(\Buil,\hat\Gamma(V)\bigr) \cong \Ho^0\bigl(\Buil,\hat\Gamma(\Phi(V))\bigr),
\]
so that we may restrict attention to \(V\in\Rep(\Proj_x)\) in the following.  Our description of the cohomology is reminiscent of the roughening functor for \(\Hecke(\G_\LF,R)\)-modules, but the comparison of the two constructions requires an additional assumption:

\begin{lemma}
  \label{lem:cohomology_roughening}
  Assume that for each compact open subgroup \(\CG\subseteq\G_\LF\) there is a finite convex subcomplex \(\Sigma\subseteq\Buil\) with \(\av{\CG} \Phi(V) = \sum_{x\in\Sigma^\circ} \av{\CG}\Proj_x(V)\).  Then \(\Ho^0\bigl(\Buil,\hat\Gamma(V)\bigr)\) is the roughening of~\(\Phi(V) \) as a representation of~\(\G_\LF\).  In particular,
  \[
  \Smooth \bigl( \Ho^0 \bigl(\Buil,\hat\Gamma(V) \bigr) \bigr) \cong \Phi (V) .
  \]
\end{lemma}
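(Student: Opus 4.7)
First I would apply Theorem~\ref{the:main_theorem_resolution} to rewrite
\[
\Ho^0\bigl(\Buil,\hat\Gamma(V)\bigr)\cong\varprojlim_{\Sigma_\fin\in I}\support_{\Sigma_\fin}(V),
\]
combining the second bullet with the identification $V/\bigcap_{x\in\Sigma_\fin^\circ}\ker\Proj_x\cong\support_{\Sigma_\fin}(V)\subseteq\Phi(V)$ from the third bullet (applied to each finite convex subcomplex). Since $\hat\Gamma$ depends only on $\Phi(V)$, we may replace $V$ by $\Phi(V)$ throughout; by Theorem~\ref{the:Serre_subcategory}, $\Phi(V)$ is smooth. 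The task then reduces to producing a natural isomorphism
\[
\varprojlim_{\Sigma_\fin\in I}\support_{\Sigma_\fin}(V)\cong\varprojlim_n\av{\CG_n}V=\Rough\bigl(\Phi(V)\bigr).
\]

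Next, I would build a cofinal sequence $(\Sigma_n)_{n\in\N}$ of finite convex subcomplexes with $\Sigma_n\subseteq\Sigma_{n+1}$, $\bigcup_n\Sigma_n=\Buil$, every vertex of $\Sigma_n$ fixed pointwise by $\CG_n$ (so $\av{\CG_n}$ commutes with $\support_{\Sigma_n}$), and the lemma's hypothesis $\av{\CG_n}V=\av{\CG_n}\support_{\Sigma_n}(V)$ in force. Such a sequence exists by inductively applying the hypothesis to $\CG_n$, enlarging the resulting subcomplex to contain the previous one, and passing to an index large enough that the enlargement sits inside $B_n$. Combined with commutativity, the hypothesis yields the inclusion $\av{\CG_n}V\subseteq\support_{\Sigma_n}(V)$. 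Define tower maps $\tau_n\colon\support_{\Sigma_n}(V)\to\av{\CG_n}V$ by $\tau_n(v)\defeq\av{\CG_n}v$; each $\tau_n$ is a retraction of the inclusion $\av{\CG_n}V\hookrightarrow\support_{\Sigma_n}(V)$. Compatibility with the transitions, $\tau_n\circ\support_{\Sigma_n}=\av{\CG_n}\circ\tau_{n+1}$, follows because for $v\in\support_{\Sigma_{n+1}}(V)$ the element $\av{\CG_n}v$ lies in $\av{\CG_n}V\subseteq\support_{\Sigma_n}(V)$, whence $\support_{\Sigma_n}(\av{\CG_n}v)=\av{\CG_n}v$.

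Passing to $\varprojlim$ yields a natural $\G_\LF$\nb-equivariant comparison $\Ho^0\bigl(\Buil,\hat\Gamma(V)\bigr)\to\Rough(\Phi(V))$. To verify that it is an isomorphism, I would analyse the short exact sequence of towers
\[
0\to K_n\to\support_{\Sigma_n}(V)\xrightarrow{\tau_n}\av{\CG_n}V\to0,\qquad K_n\defeq(1-\av{\CG_n})\support_{\Sigma_n}(V),
\]
and use the resulting six-term exact sequence for $\varprojlim$; this reduces the claim to $\varprojlim K_n=0$ and $\varprojlim^1 K_n=0$. I expect the vanishing of $\varprojlim K_n$ to be the main obstacle: for a thread $(v_n)$ with $v_n\in K_n$ and $\support_{\Sigma_n}v_{n+1}=v_n$, the commutativity together with the hypothesis forces $\av{\CG_n}v_m=0$ for every $m\ge n$; combining this with the smoothness of $\Phi(V)$ (each $v_m$ is $\CG_N$\nb-fixed for some $N\ge m$) by a telescoping argument should force $v_n=0$. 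The $\varprojlim^1$ vanishing should follow by Mittag--Leffler using the level-wise splitting. The ``in particular'' clause then follows by applying $\Smooth$ to the isomorphism $\Ho^0\bigl(\Buil,\hat\Gamma(V)\bigr)\cong\Rough(\Phi(V))$ and invoking $\Smooth\bigl(\Rough(W)\bigr)\cong W$ for smooth $W=\Phi(V)$ from Proposition~\ref{pro:Hecke_modules}.
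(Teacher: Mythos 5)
Your overall strategy differs from the paper's. You build an explicit morphism of towers $\tau_n\colon\support_{\Sigma_n}(V)\to\av{\CG_n}V$ and analyse its kernel via the $\varprojlim$/$\varprojlim^1$ exact sequence, whereas the paper shows directly that the three projective systems $\bigl(\support_n(V)\bigr)_n$, $\bigl(\av{\CG_m}\Phi(V)\bigr)_m$, and the double system $\bigl(\av{\CG_m}\support_n(V)\bigr)_{n,m}$ are mutually cofinal and hence have the same limit. The paper's interleaving uses two observations: (i) your hypothesis gives, for each $m$, some $N$ with $\av{\CG_m}\support_N(V)=\av{\CG_m}\Phi(V)$; and (ii) since each $\support_n$ is a locally constant compactly supported function on $\G_\LF$, it is $\CG_M$-biinvariant for large $M$, so $\av{\CG_M}\support_n=\support_n$ and hence $\support_n(V)\subseteq\av{\CG_M}V$. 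Observation (ii) is the one your argument is missing, and it is precisely what makes surjectivity free. Your injectivity step ($\varprojlim K_n=0$) is essentially correct and implicitly uses (ii) through the step where you pick $N\ge n$ with $v_n\in\av{\CG_N}V$ and deduce $v_n=\support_{\Sigma_n}\av{\CG_N}v_N=0$; incidentally, for the commutation of $\av{\CG_n}$ with $\support_{\Sigma_n}$ you only need $\Sigma_n$ to be $\CG_n$\nb-invariant (so that $\CG_n$ permutes its simplices), which is easier to arrange by taking $\CG_n$\nb-orbit hulls than requiring pointwise fixing.

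The surjectivity step, however, has a real gap. A level-wise split short exact sequence of towers does not make the kernel tower Mittag--Leffler: the level-wise sections $\iota_n\colon\av{\CG_n}V\hookrightarrow\support_{\Sigma_n}(V)$ need not intertwine the transition maps (for $v\in\av{\CG_{n+1}}V$, the elements $\support_{\Sigma_n}v$ and $\av{\CG_n}v$ are in general distinct), so one cannot lift a thread $(c_n)$ of the quotient tower termwise. Nor is there an evident reason why the images of $K_m\to K_n$ should stabilise. To close this gap along your line of argument you would need the second interleaving inclusion $\support_{\Sigma_n}(V)\subseteq\av{\CG_{M(n)}}V$ from the biinvariance of $\support_n\in\Hecke$; but once you invoke that, you effectively have both halves of the interleaving and the paper's cofinality argument finishes everything at once, without any $\varprojlim^1$ bookkeeping. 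The final ``in particular'' step, applying $\Smooth$ and using $\Smooth\bigl(\Rough(W)\bigr)\cong W$ for smooth $W$ from Proposition~\ref{pro:Hecke_modules}, is correct.
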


\begin{proof}
  Let \((\CG_n)_{n\in\N}\) be a decreasing sequence of compact open subgroups with \(\bigcap \CG_n = \{1\}\).  We may assume that~\(\Sigma_n\) is \(\CG_0\)\nb-invariant for all \(n\in\N\), so that \(\support_n\defeq \support_{\Sigma_n}\) commutes with~\(\CG_0\).  Hence the idempotents~\(\support_n\) and~\(\av{\CG_m}\) commute for all \(n,m\in\N\).  Since~\(\support_n\) is a locally constant function on~\(\G_\LF\), it is \(\CG_M\)-invariant for sufficiently large~\(M\).  This means that there is \(M_0\in\N\) with \(\av{\CG_M}\support_n = \support_n\) for all \(M\ge M_0\).  The assumption in the statement means that for each \(m\in\N\) there is \(N_0\in\N\) with \(\av{\CG_m}\support_N(V) = \av{\CG_m}\support_{N_0}(V)\) for all \(N\ge N_0\).  It follows that the projective systems \(\bigl(\av{\CG_m}\support_n(V)\bigr)_{n,m\in\N}\), \(\bigl(\support_n(V)\bigr)_{n\in\N}\), and \(\bigl(\av{\CG_m}\Phi(V)\bigr)_{m\in\N}\) are all equivalent, where \(\Phi(V) = \sum_{x\in\Buil^\circ} \Proj_x(V) = \varinjlim \support_n(V)\).  This implies the assertion.
\end{proof}

The assumption of the lemma is automatic if~\(V\) is admissible in the sense that \(\av{\CG}V\) is finitely generated for each compact open subgroup~\(\CG\) because the finitely many generators must belong to \(\av{\CG}\support_n(V)\) for some \(n\in\N\).  But this assumption is far from necessary:

\begin{proposition}
  \label{pro:cohomology_roughening_Noetherean}
  Let~\(R\) be a field of characteristic not equal to~\(p\).  The limit \(\support_\infty \defeq \lim_{n\to\infty} \support_n\) exists in the multiplier algebra of \(\Hecke(\G_\LF,R)\) and is a central idempotent, that is,
  \[
  \support_\infty f = f\support_\infty = \lim_{n\to\infty} \support_n f = \lim_{n\to\infty} f\support_n\qquad
  \text{for all \(f\in\Hecke(\G_\LF,R)\),}
  \]
  and the sequences converge in the strong sense of becoming eventually constant.

  For any \(\Hecke(\G_\LF,R)\)-module~\(V\), we have $\support_\infty V = \Phi (V)$ and
  \[
  \Ho_0\bigl(\Buil,\Gamma(V)\bigr) \cong \support_\infty V ,\qquad
  \Ho^0\bigl(\Buil,\hat\Gamma(V)\bigr) \cong \Rough(\support_\infty V),
  \]
  where \(\Rough\) denotes the roughening functor.
\end{proposition}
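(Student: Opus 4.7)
The plan is to first establish $\support_\infty$ as a central idempotent in $\Mult(\Hecke(\G_\LF, R))$, and then derive the (co)homology formulas from Theorem \ref{the:main_theorem_resolution} essentially formally. The whole proposition rests on the stabilization claim that both sequences $\support_n f$ and $f \support_n$ are eventually constant in $\Hecke$ for each $f \in \Hecke(\G_\LF, R)$.

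For the stabilization, by $\G_\LF$-equivariance and $R$-linearity one reduces to the case $f = \av{\CG}$ for a compact open pro-$p$-subgroup $\CG$. The strategy is to choose the exhausting sequence $\Sigma_n \defeq \Buil^{\CG_n}$ for a decreasing chain of compact open pro-$p$-subgroups $\CG_n$ with $\bigcap \CG_n = \{1\}$. By $\G_\LF$-equivariance of $(\Proj_x)$ and the pointwise $\CG_n$-fixedness of $\Sigma_n$, the subgroup $\CG_n$ commutes with $\support_n$ in $\Hecke$. The orthogonal increment $e_n \defeq \support_{n+1} - \support_n$ is then an idempotent satisfying $e_n \support_n = \support_n e_n = 0$, and stabilization amounts to showing $e_n \av{\CG} = 0$ for $n$ large. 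I would establish this by applying Corollary \ref{cor:support_separate} to the situation where $\Sigma_n$ separates the newly added simplices of $\Sigma_{n+1} \setminus \Sigma_n$ from the finite convex subcomplex $\Buil^\CG$, together with the decomposition $\av{\CG} = \support_n \av{\CG} + (1 - \support_n) \av{\CG}$ from the third part of Theorem \ref{the:main_theorem_resolution}.

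With $\support_\infty$ established, its central idempotent properties follow at once: $\support_\infty^2 = \lim_n \support_\infty \support_n = \lim_n \support_n = \support_\infty$ using $\support_n \support_{n+1} = \support_n$, and the centrality $\support_\infty f = f \support_\infty$ follows from passing to limits in the two stabilizations. For any $\Hecke(\G_\LF,R)$-module $V$, the equality $\support_\infty V = \Phi(V)$ holds in two directions: for any finite sum $v = \sum f_i w_i \in \Hecke V$, we get $\support_\infty v = \support_N v \in \support_N V \subseteq \Phi(V)$ for $N$ large enough to stabilize every $\support_n f_i$; conversely, $\support_\infty \Proj_x = \Proj_x$ because $\support_n \Proj_x = \Proj_x$ once $x \in \Sigma_n^\circ$, so $\Phi(V) \subseteq \support_\infty V$. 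Theorem \ref{the:main_theorem_resolution} then gives $\Ho_0(\Buil, \Gamma(V)) \cong \Phi(V) = \support_\infty V$ directly.

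For the cohomology, Theorem \ref{the:main_theorem_resolution} yields $\Ho^0(\Buil, \hat\Gamma(V)) \cong \varprojlim_n \support_n V$ via the splittings from the third part of that theorem. To identify this with $\Rough(\support_\infty V) = \varprojlim_m \av{\CG_m} \Phi(V)$, use that the stabilization of $\av{\CG_m} \support_n$ (a special case of the first step, applied to $f = \av{\CG_m}$) gives $\av{\CG_m} \Phi(V) = \av{\CG_m} \support_{N_m} V$ for some $N_m$, so the inverse systems $(\support_n V)_n$ and $(\av{\CG_m} \Phi(V))_m$ are mutually cofinal subsystems of the partial order of subobjects of $V$, yielding equal projective limits. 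The main obstacle is the stabilization claim $e_n \av{\CG} = 0$ for $n$ large: this requires a delicate combinatorial separation argument in the Bruhat--Tits building, and the field hypothesis on $R$ is essential for the underlying linear-algebraic cancellation to work without the admissibility hypothesis of Lemma \ref{lem:cohomology_roughening}; everything else is formal consequence of Theorem \ref{the:main_theorem_resolution} and the resulting multiplier structure on $\support_\infty$.
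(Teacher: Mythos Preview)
Your proposal has a genuine gap at the crucial stabilization step. You propose to show \(e_n\av{\CG}=0\) for large~\(n\) via Corollary~\ref{cor:support_separate} and a ``delicate combinatorial separation argument'' in the building, but you do not carry this out, and it is not clear that any such argument exists. The difficulty is that \(\av{\CG}\) is not a support projection: it is the averaging idempotent over a subgroup, and there is no purely geometric relationship between \(\av{\CG}\) and the support projections \(\support_{\Sigma_n}\) that would force \(\av{\CG}\support_n\) to stabilize. Corollary~\ref{cor:support_separate} relates \emph{support projections} of separating subcomplexes to one another; it says nothing about how \(\av{\CG}\) interacts with them. Your invocation of the decomposition \(\av{\CG}=\support_n\av{\CG}+(1-\support_n)\av{\CG}\) is just the trivial splitting by an idempotent and does not come from Theorem~\ref{the:main_theorem_resolution}.

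The paper's proof takes a completely different route: it uses the deep fact (Bernstein in characteristic~\(0\), Vign\'eras for characteristic \(\neq p\)) that the unital subalgebras \(\av{\CG_m}\Hecke(\G_\LF,R)\av{\CG_m}\) are Noetherian. The increasing chain of submodules \(\av{\CG_m}\Hecke\av{\CG_m}\support_n\) must then stabilize, and since the \(\support_n\) and \(\av{\CG_m}\) are commuting idempotents this forces \(\av{\CG_m}\support_n=\av{\CG_m}\support_N\) for all \(N\ge n\). This is precisely where the field hypothesis enters, not in any ``linear-algebraic cancellation'' as you suggest. Your remark that ``the field hypothesis on~\(R\) is essential'' is correct in spirit but misplaced: you never actually use that \(R\) is a field anywhere in your sketch, which is a sign that the key ingredient is missing. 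Once stabilization is established via Noetherianity, the remaining steps (centrality, \(\support_\infty V=\Phi(V)\), the cofinality argument identifying the two inverse systems) proceed largely as you outline, though note that centrality also requires its own short argument in the paper and does not follow merely from the existence of the two one-sided limits.
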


\begin{proof}
  Let \(\Hecke\defeq \Hecke(\G_\LF,R)\).  Since~\(R\) is a field of characteristic not equal to~\(p\), there is a decreasing sequence of compact open subgroups~\((\CG_m)_{m\in\N}\) with \(\bigcap \CG_m=\{1\}\) for which the unital algebras \(\av{\CG_m}\Hecke\av{\CG_m}\) are Noetherean.  For fields of characteristic~\(0\), this is a result of Joseph Bernstein~\cite{Bernstein:Centre}; for fields of finite characteristic not equal to~\(p\), this is due to Marie-France Vign\'eras~\cite{Vigneras:l-modulaires}*{2.13}.

  We fix \(m\in\N\) and assume, as we may, that~\(\Sigma_n\) is \(\av{\CG_m}\)-invariant.  Since \(\av{\CG_m} \Hecke \av{\CG_m}\) is Noetherean, its submodule \(\bigcup_{n\in\N} \av{\CG_m} \Hecke \av{\CG_m} \support_n \) is finitely generated. That is, there exists \(n\in\N\) such that \(\av{\CG_m} \Hecke \av{\CG_m} \support_n = \av{\CG_m} \Hecke \av{\CG_m} \support_N\) for all \(N\ge n\).  Since \(\av{\CG_m}\), \(\support_n\), and~\(\support_N\) are commuting idempotents, this implies \(\av{\CG_m} \support_n = \av{\CG_m} \support_N\). Therefore \(\support_n * f = \support_N * f\) and \(f * \support_n = f * \support_N\) for all \(N\ge n\) and all \(f \in \av{\CG_m}\Hecke \av{\CG_m}\).  Thus the sequences \((f * \support_n)_{n\in\N}\) and \((\support_n * f)_{n\in\N}\) eventually become constant.  Since~\(m\) is arbitrary, we get a multiplier \(\support_\infty\defeq \lim \support_n\).  It is idempotent because all~\(\support_n\) are idempotent.

  Let \(f\in\Hecke\) and let \(X\defeq \supp f\).  For each \(n\in\N\), there is \(N\ge n\) with \(X(\Sigma_n)\subseteq \Sigma_N\).  Then \(\support_N \ge g\support_ng^{-1}\) for all \(g\in X\) and hence \(\support_N*f*\support_n=f*\support_n\).  Thus \(\support_\infty*f*\support_\infty=f*\support_\infty\).  A similar argument yields \(\support_\infty*f*\support_\infty=\support_\infty*f\).  Thus~\(\support_\infty\) is central.

As already noted in proof of Lemma \ref{lem:cohomology_roughening}, $\support_\infty V = \lim_{n \to \infty} \support_n V = \Phi (V)$.

  Recall that \(\av{\CG_m}\support_\infty=\av{\CG_m}\support_n\le \support_n\) for sufficiently large \(n\in\N\).  Since \(\support_n\in\Hecke\), we also have \(\support_n\le \av{\CG_M}\) for sufficiently large \(M\in\N\), hence \(\support_n\le \av{\CG_M}\support_\infty\).  As a consequence, the inductive systems \(\bigl(\av{\CG}_m\support_\infty(V)\bigr)_{m\in\N}\) and \(\bigl(\support_n(V)\bigr)_{n\in\N}\) are equivalent, so that they have isomorphic direct limits.  By Theorem~\ref{the:main_theorem_resolution}, this yields
  \[
  \Ho_0\bigl(\Buil,\Gamma(V)\bigr)
  \cong \varinjlim \support_n(V)
  \cong \varinjlim \av{\CG_m} \support_\infty(V)
  \cong \Smooth\bigl(\support_\infty(V) \bigr) = \support_\infty (V)
  \]
The assertion about $H^0$ can be proved as in Lemma \ref{lem:cohomology_roughening}.
\end{proof}

\begin{proposition}
  \label{pro:products_Rep_Noetherean}
 Let~\(R\) be a field of characteristic not equal to~\(p\).   Then the subcategory \(\Rep(\Proj_x)\) in the category of \emph{smooth} \(\Hecke(\G_\LF,R)\)-modules in~\(\Abcat\) is closed under smooth direct product and hence under arbitrary smooth limits.  That is, if \((V_i)_{i\in I}\) is a family of objects of \(\Rep(\Proj_x)\), then \(\Smooth\bigl( \prod_{i\in I} V_i\bigr)\) belongs to \(\Rep(\Proj_x)\) as well.
\end{proposition}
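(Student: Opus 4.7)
The plan is to exploit the central idempotent multiplier $\support_\infty$ of $\Hecke(\G_\LF,R)$ produced in Proposition~\ref{pro:cohomology_roughening_Noetherean}. That proposition shows $\support_\infty V = \Phi(V)$ for every $\Hecke(\G_\LF,R)$-module~$V$, so $V \in \Rep(\Proj_x)$ precisely when $\support_\infty$ acts as the identity on~$V$. Writing $W \defeq \Smooth\bigl(\prod_{i\in I} V_i\bigr)$, I will verify this identity action on~$W$, from which $W = \Phi(W)$, and hence $W \in \Rep(\Proj_x)$, follow at once.

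To establish the identity action, I pick an arbitrary $v = (v_i)_{i\in I} \in W$. Smoothness of~$W$ yields $m\in\N$ with $\av{\CG_m} v = v$, which forces $\av{\CG_m} v_i = v_i$ for every~$i$. The eventual-constancy assertion in Proposition~\ref{pro:cohomology_roughening_Noetherean} then produces $N\in\N$ with $\support_N \av{\CG_m} = \support_\infty \av{\CG_m}$ in $\Hecke(\G_\LF,R)$. Since $\support_N$ is a genuine element of $\Hecke(\G_\LF,R)$, it acts componentwise on the ordinary product $\prod_i V_i$. Using $V_i \in \Rep(\Proj_x)$ in each factor, so that $\support_\infty v_i = v_i$, I would then compute
\[
\support_N v_i = \support_N \av{\CG_m} v_i = \support_\infty \av{\CG_m} v_i = \support_\infty v_i = v_i,
\]
so $\support_N v = v$; and by the definition of the multiplier action of $\support_\infty$ on the smooth module~$W$ via its stabilised representative in $\Hecke(\G_\LF,R)$, this equals $\support_\infty v$. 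Hence $\support_\infty$ acts as the identity on every element of~$W$.

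Closure of $\Rep(\Proj_x)$ under arbitrary smooth limits will then follow formally: the smoothening functor is right adjoint to the inclusion of smooth modules into all $\Hecke(\G_\LF,R)$-modules (see Proposition~\ref{pro:Hecke_modules}), hence preserves limits, so every smooth limit embeds as a subobject of $\Smooth\bigl(\prod_i V_i\bigr)$, and Theorem~\ref{the:Serre_subcategory} already supplies closure under subobjects.

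The main obstacle is conceptual rather than computational. The ordinary product $\prod_i V_i$ is typically not a smooth module, so the multiplier $\support_\infty$ has no direct componentwise action on it, and one cannot naively remark that it acts trivially on each factor. The trick is to work inside the smoothening~$W$ and use the stabilisation property in Proposition~\ref{pro:cohomology_roughening_Noetherean} to replace the abstract action of $\support_\infty$ on a smooth vector by the concrete action of a single element $\support_N \in \Hecke(\G_\LF,R)$, which does act componentwise on $\prod_i V_i$; only at that point may the hypothesis $V_i \in \Rep(\Proj_x)$ be applied factor by factor.
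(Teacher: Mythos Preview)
Your argument is correct. Both your proof and the paper's rest on the same underlying fact---the Noetherian property forces the sequence \(\av{\CG_m}\support_n\) to stabilise, so on each smooth level \(\av{\CG_m}W\) the infinite union \(\Phi\) is realised by a single element of the Hecke algebra, which then acts componentwise on the product. The paper re-derives this stabilisation directly inside the proof (finding a finite \(\Sigma\) with \(\av{\CG}\Phi(V)=\av{\CG}\support_\Sigma V\) for all~\(V\)) and then computes \(\av{\CG}\Phi\bigl(\prod_i V_i\bigr)\) by commuting the finite sum \(\sum_{x\in\Sigma^\circ}\) past the product. You instead invoke Proposition~\ref{pro:cohomology_roughening_Noetherean} to package the stabilisation as the central idempotent multiplier~\(\support_\infty\), reducing the question to the clean criterion ``\(\support_\infty\) acts as the identity,'' and then transfer from~\(\support_\infty\) to a genuine Hecke element~\(\support_N\) on each \(\av{\CG_m}\)-level. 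Your route is a little more conceptual and avoids repeating the Noetherian argument, at the cost of being phrased elementwise; the paper's computation with subobjects works verbatim in a general Abelian category~\(\Abcat\), whereas your version would need the mild rephrasing ``\(\support_N\av{\CG_m}\) acts as the identity on \(\av{\CG_m}W=\prod_i \av{\CG_m}V_i\)'' to be fully diagrammatic.
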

Notice that the smoothening of the product is a product in the categorical sense in the subcategory of smooth representations.
\begin{proof}
Since (smooth) limits are subobjects of (smooth) products, it suffices to treat products.  The assertion 
is non-trivial because direct products do no commute with arbitrary direct sums, but only with \emph{finite}
sums.  Let~\(\CG\) be a compact open subgroup.  Since~\(\av{\CG}\Hecke(\G_\LF,R)\av{\CG}\) is Noetherean,
there exists a finite convex subcomplex \(\Sigma\subseteq\Buil\) such that 
\[
\av{\CG}\Phi(V) = \av{\CG} \Hecke (\G_\LF,R) \av{\CG} \Phi (V) \;\; \text{equals }
\sum_{x\in\Sigma^\circ} \av{\CG}\Proj_x (V) = \av{\CG} \Hecke (\G_\LF,R) \av{\CG} u_\Sigma V . 
\]
for all \(V\in\Rep\). (See the proof of Proposition~\ref{pro:cohomology_roughening_Noetherean}.)
Since \(V_i\cong\Phi(V_i)\) for all \(i\in I\) by assumption, we get \(\av{\CG}(V_i) = 
\sum_{x\in\Sigma^\circ} \av{\CG}\Proj_x(V_i)\) for all \(i\in I\).  Therefore
  \begin{multline*}
    \av{\CG}\biggl( \prod_{i\in I} V_i\biggr)
    = \prod_{i\in I} \av{\CG} V_i
    = \prod_{i\in I} \sum_{x\in\Sigma^\circ} \av{\CG} \Proj_x(V_i)
    \\= \sum_{x\in\Sigma^\circ} \prod_{i\in I} \av{\CG} \Proj_x(V_i)
    = \av{\CG} \sum_{x\in\Sigma^\circ} \Proj_x \prod_{i\in I} V_i
    = \av{\CG} \Phi \prod_{i\in I} V_i.
  \end{multline*}
Thus \(\Smooth\bigl(\prod V_i\bigr) = \Phi\bigl( \prod V_i\bigr) = \Phi\circ\Smooth\bigl( \prod V_i\bigr)\), 
that is, \(\Smooth\bigl(\prod V_i\bigr)\in\Rep(\Proj_x)\).
\end{proof}

\section{Towards a Lefschetz character formula}
\label{sec:character_formula}

Let~\(R\) be a field whose characteristic is different from~\(p\).  Let~\(V\) be an \(R\)\nb-vector space and let \(\varrho\colon \G_\LF\to \Aut(V)\) be a finitely generated, smooth, admissible representation of~\(\G_\LF\).  That is, any \(v\in V\) is \(\CG\)\nb-invariant for some compact open subgroup~\(\CG\), the subspace of \(\CG\)\nb-invariant vectors in~\(V\) is finite-dimensional for each compact open subgroup~\(\CG\) of~\(\G_\LF\), and~\(V\) is finitely generated as a module over \(\Hecke(\G_\LF,R)\).  This implies that~\(V\) is generated by its \(\CG\)\nb-invariant vectors for a sufficiently small compact open subgroup \(\CG\subseteq\G_\LF\).  Hence \(V\in\Rep(\Proj_x)\) for a suitable equivariant consistent system of idempotents \(\Proj_x\in\Hecke(\G_\LF,R)\) (see~\cite{Schneider-Stuhler:Rep_sheaves}).  We fix such a system \((\Proj_x)_{x\in\Buil^\circ}\) and consider the associated cosheaf~\(\Gamma(V)\).

Admissibility implies that \(\varrho(f)\in\Endo(V)\) is a finite rank operator for each \(f\in\Hecke(\G_\LF,R)\) and hence has a well-defined trace.  This defines an \(R\)\nb-linear map \(\Hecke(\G_\LF,R)\to R\) called the \emph{character} of~\(\varrho\).  For \(R=\C\) a deep theorem of Harish-Chandra asserts that the character 
is of the form \(f\mapsto \int_{\G_\LF} f(x) \chi_\varrho(x) \,\diff x\) for some locally integrable function~\(\chi_\varrho\) that is locally constant at regular semisimple elements.  Thus the character is not just a distribution but a function defined on regular semisimple elements of~\(\G_\LF\).  The values of this character at regular elliptic elements are computed by Peter Schneider and Ulrich Stuhler in~\cite{Schneider-Stuhler:Rep_sheaves}, using the resolutions described above.  The resulting formula is a Lefschetz fixed point formula for the character because it assembles the character value at a regular elliptic element \(g\in\G_\LF\) from contributions by the fixed points of~\(g\) in the building~\(\Buil\).

Jonathan Korman~\cite{Korman:Character} how to extend this computation to general regular compact elements under an additional assumption, which he could verify in the rank-\(1\)-case.  Theorem~\ref{the:main_theorem_resolution} shows that these assumptions are satisfied in general, so that we can compute the character on all compact elements.  The formula we establish here does not yet apply to non-compact regular elements.  We plan to discuss more general character formulas elsewhere, using suitable compactifications of the building.  Our goal here is more modest.

For each polysimplex \(\sigma\in\Buil\), the cosheaf value \(V_\sigma\defeq \Proj_\sigma(V)\) carries a representation of \(\StabS_\sigma\defeq \{g\in\G_\LF\mid g\sigma=\sigma\}\).  We also allow elements of~\(\StabS_\sigma\) to permute the vertices of~\(\sigma\) and even to change orientation.  The representation of~\(\StabS_\sigma\) is the one that appears in the cellular chain complex and thus involves the orientation character \(\StabS_\sigma \to \{\pm1\}\) in~\eqref{eq:orientation_character}.  We let \(\chi_\sigma\colon \StabS_\sigma\to R\) be the character of the representation of~\(\StabS_\sigma\) on~\(V_\sigma\).

Let~\(\CG\) be a compact open subgroup of~\(\G_\LF\).  We want to compute the restriction of the character~\(\chi_\varrho\) of~\(V\) to~\(\CG\) in terms of the characters~\(\chi_\sigma\) of the representations~\(V_\sigma\).  More precisely, we restrict~\(\chi_\sigma\) to \(\CG\cap\StabS_\sigma\) and then extend it by~\(0\) to~\(\CG\).  Summing up these functions over the \(\CG\)\nb-orbit~\(\CG\sigma\), we get the character of the $\CG$\nb-representation \(\Ind_{\CG\cap\StabS_\sigma}^\CG \Res_{\StabS_\sigma}^{\CG\cap\StabS_\sigma} V_\sigma\).

\begin{proposition}
  \label{pro:Lefschetz_character_formula}
  For each \(f\in\Hecke(\CG)\) there is a finite convex subcomplex~\(\Sigma_0\) in~\(\Buil\) such that
  \[
  \chi_\varrho(f)
  = \int_\CG f(g) \cdot \sum_{\sigma\in\Sigma} (-1)^{\deg\sigma} \chi_\sigma(g) \,\diff g
  = \int_\CG f(g) \cdot \sum_{\twolinesubscript{\sigma\in\Sigma}{g\sigma=\sigma}} (-1)^{\deg\sigma} \chi_\sigma(g) \,\diff g
  \]
  for all \(\CG\)\nb-invariant finite convex subcomplexes \(\Sigma\supseteq \Sigma_0\).
\end{proposition}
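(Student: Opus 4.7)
My plan is to exhibit \(\chi_\varrho(f)\) as the Euler characteristic of a finite-dimensional resolution of \(V|_\Sigma\defeq\sum_{x\in\Sigma^\circ}\Proj_x(V)\) for a sufficiently large \(\CG\)\nb-invariant finite convex subcomplex~\(\Sigma\), and then to compute the trace on each chain module via the explicit \(\G_\LF\)\nb-action on \(\Cell(\Sigma,\Gamma)\).  Admissibility makes \(\varrho(f)\in\Endo(V)\) a finite-rank operator, so \(f(V)\subseteq V\) is finite-dimensional.  Because \(V\in\Rep(\Proj_x)\), we have \(V=\varinjlim V|_{\Sigma'}\) as \(\Sigma'\) ranges over finite convex subcomplexes of \(\Buil\), so \(f(V)\subseteq V|_{\Sigma_0'}\) for some such~\(\Sigma_0'\).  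The orbit \(\CG\cdot\Sigma_0'\) is finite since \(\CG\) is compact, and the convex hull of any finite subset of \(\Buil\) is itself a finite convex subcomplex; taking this hull gives a finite convex \(\CG\)\nb-invariant subcomplex \(\Sigma_0\) with \(f(V)\subseteq V|_{\Sigma_0}\).

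Fix any \(\CG\)\nb-invariant finite convex \(\Sigma\supseteq\Sigma_0\).  Theorem~\ref{the:main_theorem_resolution} provides a \(\G_\LF\)\nb-equivariant exact resolution \(\Cell(\Sigma,\Gamma)\epi V|_\Sigma\).  Every idempotent \(\Proj_\sigma\in\Hecke(\G_\LF,R)\) is bi-invariant under a compact open subgroup of \(\G_\LF\), so admissibility forces each \(V_\sigma=\Proj_\sigma(V)\) to be finite-dimensional; combined with the finiteness of~\(\Sigma\), this exhibits \(\Cell(\Sigma,\Gamma)\) as a bounded complex of finite-dimensional \(\CG\)\nb-representations.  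Since \(f(V)\subseteq V|_\Sigma\), the trace of \(\varrho(f)\) on~\(V\) equals its trace on~\(V|_\Sigma\), and the Euler--Poincar\'e principle applied to the resolution gives
\[
\chi_\varrho(f)
= \sum_{n\ge 0}(-1)^n\int_\CG f(g)\,\tr_{\Cell[n](\Sigma,\Gamma)}\bigl(\varrho(g)\bigr)\,\diff g.
\]

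The explicit formula for the \(\G_\LF\)\nb-action on \(\Cell[n](\Sigma,\Gamma)=\bigoplus_{\dim\sigma=n}V_\sigma\) shows that only diagonal blocks contribute to the trace, and by Equation~\eqref{eq:orientation_character} the coefficient \(g_{\sigma\sigma}\) of the \(\sigma\)\nb-diagonal block vanishes unless \(g\sigma=\sigma\), in which case it equals the orientation sign and is already absorbed into \(\chi_\sigma\).  Hence
\[
\tr_{\Cell[n](\Sigma,\Gamma)}\bigl(\varrho(g)\bigr)
=\sum_{\twolinesubscript{\sigma\in\Sigma,\,\dim\sigma=n}{g\sigma=\sigma}}\chi_\sigma(g),
\]
and summing over~\(n\) produces the second equality of the proposition.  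The first equality is equivalent, since extending \(\chi_\sigma\) by zero outside \(\StabS_\sigma\) kills the \(g\sigma\neq\sigma\) terms in the unrestricted sum automatically.  The main obstacle is the preparatory geometric step of building the \(\CG\)\nb-invariant \(\Sigma_0\) and verifying the finite-dimensionality of each \(V_\sigma\); once those are in place the remainder is a routine Euler--Poincar\'e/Lefschetz trace computation.
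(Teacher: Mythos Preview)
Your proof is correct and follows essentially the same approach as the paper's: use admissibility to trap \(f(V)\) in \(V|_{\Sigma_0}\), invoke Theorem~\ref{the:main_theorem_resolution} to get a finite-dimensional resolution of \(V|_\Sigma\), and apply the Euler--Poincar\'e principle together with the block description of the \(\CG\)-action on \(\Cell(\Sigma,\Gamma)\). One minor slip: for a finite \(\Sigma\) the resolution \(\Cell(\Sigma,\Gamma)\epi V|_\Sigma\) is only equivariant for the stabiliser of~\(\Sigma\) (hence for~\(\CG\)), not for all of~\(\G_\LF\), but you only use the \(\CG\)-action so this does not affect the argument.
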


\begin{proof}
  Since \(f(V)\) is finite-dimensional, it is contained in \(\sum_{x\in\Sigma_0^\circ} \Proj_x(V)\) for some finite convex subcomplex~\(\Sigma_0\).  Let~\(\Sigma\) be a \(\CG\)\nb-invariant finite convex subcomplex containing~\(\Sigma_0\).  Then \(\Cell\bigl(\Sigma,\Gamma(V)\bigr)\) is a chain complex of \(\CG\)\nb-representations, so that~\(f\) acts on it by chain maps.  Theorem~\ref{the:main_theorem_resolution} implies that \(\Cell\bigl(\Sigma,\Gamma(V)\bigr)\) is a resolution of \(\Ho_0\bigl(\Sigma,\Gamma(V)\bigr) \cong\sum_{x\in\Sigma^\circ} \Proj_x(V)\), which contains the range of~\(f\).  Hence the trace of~\(f\) does not change when we view it as an endomorphism of \(\Ho_0\bigl(\Sigma,\Gamma(V)\bigr)\).  The action of~\(f\) on \(\Cell\bigl(\Sigma,\Gamma(V)\bigr)\) lifts the action of~\(f\) on~\(\sum_{x\in\Sigma^\circ} \Proj_x(V)\).

  The vector space \(\bigoplus_{n\in\N} \Cell[n]\bigl(\Sigma,\Gamma(V)\bigr)\) is finite-dimensional because~\(V\) is admissible and~\(\Sigma\) is finite.  Hence the Euler characteristic
  \[
  \sum_{n=0}^\infty (-1)^n \tr\bigl(f|_{\Cell[n](\Sigma,\Gamma(V))}\bigr)
  = \sum_{n=0}^\infty (-1)^n \int_\CG f(g)\cdot \tr\bigl(g|_{\Cell[n](\Sigma,\Gamma(V))}\bigr)\,\diff g
  \]
  is well-defined and agrees with the trace of~\(f\) on \(\Ho^0\bigl(\Sigma,\Gamma(V)\bigr)\), which agrees with the trace \(\chi_\varrho(f)\) of~\(f\) on~\(V\) by our construction of~\(\Sigma\).  We rewrite the Euler characteristic above using the decomposition \(\Cell\bigl(\Sigma,\Gamma(V)\bigr) = \bigoplus_{\sigma\in\Sigma} \Proj_\sigma(V)\):
  \[
  \chi_\varrho(f)
  = \int_\CG f(g) \sum_{\twolinesubscript{\sigma\in\Sigma}{g\sigma=\sigma}}  (-1)^{\deg\sigma}\tr(g|_{\Proj_\sigma(V)}) \,\diff g
  = \int_\CG f(g) \sum_{\twolinesubscript{\sigma\in\Sigma}{g\sigma=\sigma}} (-1)^{\deg\sigma} \chi_\sigma(g) \,\diff g.\qedhere
  \]
\end{proof}

As a consequence, the restriction of the character to~\(\CG\) is the limit of the functions
\[
\chi_\Sigma(g)\defeq \sum_{\twolinesubscript{\sigma\in\Sigma}{g\sigma=\sigma}} (-1)^{\deg\sigma} \chi_\sigma(g) \,\diff g,
\]
where~\(\Sigma\) runs through the set of finite \(\CG\)\nb-invariant convex subcomplexes of~\(\Buil\).  This formula is unwieldy because we cannot exchange the limit over~\(\Sigma\) and the summation: the cancellation between simplices of different parity is needed for the limit to exist.  Recall that the set of simplices~\(\sigma\) with \(g\sigma=\sigma\) is finite if and only if~\(g\) is an regular elliptic element of~\(\G_\LF\).  In this case, the relevant character formula appears already in~\cite{Schneider-Stuhler:Rep_sheaves}.

\section{Conclusion and outlook}
\label{sec:conclusion}

An equivariant consistent systems of idempotents in the Hecke algebra of a reductive \(p\)\nb-adic group produces a natural cosheaf and a natural sheaf on the building of the group for any representation.  The consistency conditions ensure that the homology and cohomology with these coefficients vanishes except in degree zero, where we get a certain subspace and quotient of the representation we started with.  The representations for which the zeroth homology of this cosheaf agrees with the given representation form a Serre subcategory.  We have used support projections to describe this Serre subcategory as the module category over a suitable corner in the Hecke algebra of the group.  These support projections of convex subcomplexes of the building are also a crucial tool for the homology computation.  They are described by a surprisingly simple formula, which only defines an idempotent element of the Hecke algebra because of the consistency conditions.  Since our homological computations still work for convex subcomplexes of the building, we also get a formula for the values of the character of a representation on regular compact elements, which involves the fixed point subset in the building.  But this formula is still unwieldy because the relevant fixed point subsets are infinite for non-elliptic elements, leading to infinite sums that converge only conditionally.

\begin{bibdiv}
  \begin{biblist}
    \bibselect{references}
  \end{biblist}
\end{bibdiv}
\end{document}